\documentclass{article}
\usepackage[
    a4paper,
    left=2.8cm,
    right=2.5cm,
    top=2.5cm,
    bottom=2.5cm,
    headsep=0.8cm,
    footskip=1cm
]{geometry}

\newcounter{case}
\renewcommand{\thecase}{Case~\arabic{case}}

\newcommand{\mycase}[2]{%
  \refstepcounter{case}%
  \noindent\textbf{\thecase.\ #1}\label{#2}%
  \par\nobreak\vspace{4mm}\noindent\ignorespaces%
}

\usepackage{graphicx} 
\usepackage{amsmath,amssymb}
\usepackage{mathrsfs}
\usepackage{amsthm}
\usepackage{subcaption}
\usepackage{comment}
\usepackage{float}
\usepackage{xcolor}
\usepackage{hyperref}
\usepackage{cleveref}
\usepackage{natbib}

\newcommand{\intfour}{[4]}
\newcommand{\inttwo}{[2]}
\newcommand{\intthree}{[3]}

\newtheorem{theorem}{Theorem}[section]
\newtheorem{Definition}{Definition}[section]

\newtheorem{proposition}[theorem]{Proposition}
\newtheorem{problem}{Problem}[section] 
\newtheorem{claim}{Claim}[section]

\newtheorem{conjecture}{Conjecture}[section]

\newcommand{\jbj}[1]{{\color{red}#1}}
\newcommand{\jbjnote}[1]{{\color{magenta}JBJ: #1}}

\newcommand{\xyc}[1]{{\color{orange}#1}}

\title{On the $2$-Linkage Problem for Split Digraphs}

\author{
Xiaoying Chen\textsuperscript{1} \and
J{\o}rgen Bang-Jensen\textsuperscript{1,2} \and
Jin Yan\textsuperscript{1}  \and
Jia Zhou\textsuperscript{1}\thanks{Corresponding author. \\ Email address: jiazhou\_99@163.com}
}
\date{}

\begin{document}

\maketitle

\noindent\textsuperscript{1} School of Mathematics, Shandong University, Jinan 250100, China\\
\noindent\textsuperscript{2}Department of Mathematics and Computer Science, University of Southern Denmark, Odense DK-5230, Denmark

\vspace{1em}

\maketitle

\begin{abstract}

A digraph is {\bf \( k \)-linked} if for arbitary two disjoint vertex sets \(\{s_1, \ldots, s_k\}\) and \(\{t_1, \ldots, t_k\}\), there exist vertex-disjoint directed paths \(P_1, \ldots, P_k\) {such that \(P_i\) is a directed path from \(s_i\) to \(t_i\) for each $i\in [k]$}. A {\bf split digraph} is a digraph \( D = (V_1, V_2; A) \) whose vertex set is a disjoint union of two nonempty sets \( V_1 \) and \( V_2 \) such that \( V_1 \) is an independent set and the subdigraph induced by \( V_2 \) is semicomplete (no pair of non-adjacent vertices). A {\bf semicomplete split digraph} is a split digraph \( D = (V_1, V_2; A) \)  in which every vertex in the independent set \( V_1 \) is adjacent to every vertex in \( V_2 \). {Semicomplete split digraphs form an important subclass of the class of semicomplete multipartite digraphs.} In this paper, we prove that every 6-strong split digraph is 2-linked.  This solves a problem posed by Bang-Jensen and Wang [J. Graph Theory, 2025]. We also show that every 5-strong semicomplete split digraph is 2-linked. This bound is tight already for semicomplete digraphs.
\end{abstract}

\noindent\textbf{Keywords:} Split digraphs; Semicomplete split digraphs; {Linkage}; Connectivity; Disjoint paths

\section{Introduction} 

Given a positive integer \( k \), a (di)graph is ({\bf strongly}) {\bf \(k\)-connected} (briefly {\bf \(k\)-strong} for digraphs) if it has at least \(k + 1\) vertices and, after deleting any set \(S\) of at most \(k - 1\) vertices, there is a path from \(x\) to \(y\) for every ordered pair of distinct vertices \(x\) and \(y\). A (di)graph \(D\) is {\bf\(k\)-linked} if for any
\(2k\) distinct vertices \(s_1, \dots, s_k, t_1, \dots, t_k\) in \(D\), there exist \(k\) disjoint paths \(P_1, \dots, P_k\) such that {\(P_i\) is a path from \(s_i\) to \(t_i\) for each \(i \in [k]\)}. It is easy to verify that every \(k\)-linked (di)graph is (\(k\)-strong) \(k\)-connected, but the converse does not hold.

{The study of $k$-linked (di)graphs has been a central topic in graph theory.} Larman and Mani \cite{larman1970existence} and Jung \cite{Jung1970} were the first to establish that there exists a function \( f(k) \) such that every \( f(k) \)-connected graph is \( k \)-linked. The initial bound on \( f(k) \) was exponential in \( k \); later, Bollobás and Thomason \cite{Bollobas1996} provided the first linear bound, showing that \( f(k) \leq 22k \). The best known bound is \( f(k) \leq 10k \), proved by Thomas and Wollan \cite{Thomas2005}. Notably, in 1980, {Seymour \cite{SEYMOUR1980293} and Thomassen \cite{thomassen19802-linked} independently characterized $2$-linked graphs by planarity, and thus proved that every $6$-connected graph is $2$-linked. Motivated by these results, Thomassen \cite{thomassen19802-linked} conjectured that an analogous statement holds for digraphs, and thus posed a stronger conjecture for digraphs.}

\begin{conjecture}[\cite{thomassen19802-linked}]\label{Conj1}
There exists a function \( f(k) \) such that every \( f(k) \)-strong digraph \( D \) is \( k \)-linked.
\end{conjecture}

{In 1991}, Thomassen \cite{thomassen1991highly} disproved the conjecture by constructing digraphs of arbitrarily high strong connectivity that are not even 2-linked. This illustrates a significant structural difference between graphs and digraphs. This difference is also displayed in the complexity of linkage problems. Fortune et. al \cite{fortuneTCS10} proved that already the 2-linkage problem is NP-complete for digraphs while one of the central results of the Robertson-Seymour Graph Minors project is that the $k$-linkage problem is polynomial for every fixed $k$ for graphs \cite{robertsonJCT63}.   This difference  has motivated the study of linkage problems for special classes of digraphs. A digraph is a {\bf tournament} if for every pair of distinct vertices \(u, v\), exactly one of \(uv, vu\) is an {arc}. {Moreover,} a digraph is {\bf semicomplete} if for all distinct {vertices} \(u, v\), at least one of \(uv, vu\) is an {arc}. Hence tournaments form a subclass of semicomplete digraphs.
Thomassen \cite{thomassen1983connectivity} proved that there exists an exponential function function $f(k)$ such that every $f(k)$-strong tournament is $k$-linked.  For 2-linkage problem, Bang-Jensen \cite{bang19882} established a tight bound for semicomplete digraphs, which is formally stated in the following theorem.

\begin{theorem}\cite{bang19882}\label{5-strong semicomplete digraph is 2-linked}
    Every 5-strong semicomplete digraph is \(2\)-linked. Furthermore, there exists an infinite class of \(4\)-strong tournaments which are not \(2\)-linked.
\end{theorem}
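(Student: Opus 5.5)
We prove the two parts separately. For the positive part, let $D$ be a 5-strong semicomplete digraph with distinct terminals $s_1,s_2,t_1,t_2$. Suppose for contradiction that no linkage exists.

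The first step is a shortest-path reduction. Take a shortest $(s_1,t_1)$-path $P$ in $D-\{s_2,t_2\}$; it exists since $D$ is 5-strong. In a semicomplete digraph a shortest path is induced in a strong sense: if $P=x_0x_1\cdots x_m$, then $x_jx_i$ is an arc for every $j\ge i+2$. A key elementary observation is that every vertex $v\notin V(P)$ either dominates $x_0$, is dominated by $x_m$, or has an out-neighbour $x_i$ and an in-neighbour $x_{i+2}$ on $P$. Otherwise $v$ could be inserted to shorten $P$. More precisely, if $x_iv$ and $vx_{i+2}$ were both arcs, we would obtain a shorter path. Hence $v$ is dominated by at most 2 consecutive vertices of $P$ in a controlled pattern.

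The second step uses the failure of the linkage. If $D-V(P)$ contains an $(s_2,t_2)$-path we are done, so $V(P)$ separates $s_2$ from $t_2$ in $D$. By 5-strong connectivity, $|V(P)|\ge 5$ and there are 5 internally disjoint $(s_2,t_2)$-paths, each meeting $P$. We then exploit the structure above to reroute. Choose $P$ to also minimise the number of vertices reachable from $s_2$ in $D-V(P)$. Let $X$ be the set of vertices reachable from $s_2$ in $D-V(P)$ and $Y$ the rest. Then all arcs between $X$ and $Y$ go from $Y$ to $X$.

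Now use the arcs from $X$ into $P$ and from $P$ into $Y$. Let $x_i$ be the first vertex of $P$ with an in-neighbour in $X$, and $x_j$ the last vertex with an out-neighbour in $Y\cup\{t_2\}$. Since $5$-strong forces at least five disjoint $X$-to-$Y$ connections through $P$, there are indices with $i<j$ and enough room to do the following. Replace the segment of $P$ around some $x_k$ using a backward chord $x_{k+2}x_k$-type arc, or via a vertex of $X$ or $Y$, so that a vertex $x_k$ is freed while $s_1$ still reaches $t_1$. Then route $s_2\to X\to x_k\to Y\to t_2$. The case analysis splits according to whether the freed vertex comes from an interior chord or from the endpoints $x_0,x_m$ being needed. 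Each case uses that deleting $s_1$, $t_1$, $x_{k\pm1}$ and one more vertex (at most 4 vertices) keeps $D$ strong. This is exactly where 5 rather than 4 is needed. \emph{The main obstacle} is this rerouting case analysis near the ends of $P$, where few chords are available. I would first try to handle it by choosing $P$ lexicographically extremal and applying Menger to $D-\{s_1,t_1\}$ with 3 remaining vertices to spare.

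For the sharpness part, I would construct 4-strong tournaments $T_n$ as follows. Take a "rotational" tournament on vertices $v_1,\dots,v_n$ in which $v_iv_j$ is an arc iff $j-i\in\{1,\dots,4\}$ modulo a suitable pattern. Place $s_1,t_1,s_2,t_2$ so that every $(s_1,t_1)$-path must use two consecutive vertices separating $s_2$ from $t_2$, and conversely. I would verify 4-strong connectivity by exhibiting 4 disjoint paths between consecutive blocks. Non-linkedness would follow by the parity/cut argument: any $(s_1,t_1)$-path hits a 4-vertex cut set used by all $(s_2,t_2)$-paths in a way that forces an intersection.
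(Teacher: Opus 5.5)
Your proposal has a genuine gap in both parts.

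\textbf{First part.} The ``key elementary observation'' is misstated. If $x_i\to v\to x_{i+2}$, then $x_0\cdots x_ivx_{i+2}\cdots x_m$ has the \emph{same} length as $P$, so nothing is contradicted. Minimality only excludes $x_i\to v\to x_j$ with $j\ge i+3$. Hence $v$ may be dominated by many, even all, vertices of $P$, and ``dominated by at most 2 consecutive vertices'' is false.

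More seriously, the rerouting step, which is the whole proof, is only asserted:
\begin{itemize}
\item You never derive the existence of suitable indices $i<j$ or of a vertex $x_k$ that can be ``freed''.
\item A backward chord $x_{k+2}x_k$ cannot help an $(s_1,t_1)$-path get around $x_k$. A shortest path in a semicomplete digraph has no forward chords, so any bypass must leave $P$ through an outside vertex, possibly the very vertex your $(s_2,t_2)$-route needs.
\item The extremal choice of $P$ via $X$ is never used.
\item Deleting $s_1,t_1,x_{k\pm1}$ and one more vertex only yields an $(s_2,t_2)$-path avoiding those vertices. It does not avoid the whole modified $(s_1,t_1)$-path, which has at least five vertices.
\end{itemize}

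\textbf{Sharpness part.} This is not a construction. The circulant with connection set $\{1,\dots,4\}$ is a tournament only on nine vertices, and ``modulo a suitable pattern'' is unspecified. In a $4$-strong digraph no two vertices separate $s_2$ from $t_2$, so the separation property as worded cannot hold. You would need an explicit infinite family, with $4$-strong connectivity and the obstruction to linkage both verified, as in Proposition~\ref{counterexample-split}.

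\textbf{What the paper does.} The paper only cites this theorem. The first part is also the special case $|V_1|=1$ of Theorem~\ref{semicomplete split digraph: main theorem 2}. The intended argument is that of Section~\ref{section 5} with all partite sets of size one, and it avoids your case analysis entirely. The missing idea is to work with three internally disjoint $(s_1,t_1)$-paths rather than one shortest path.

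If $D-\{s_2,t_2\}$ has an $(s_1,t_1)$-path of length at most $3$, delete its at most four vertices. What remains is strong and contains an $(s_2,t_2)$-path.

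Otherwise, take internally disjoint $(s_1,t_1)$-paths $P_1,P_2,P_3$ in the $3$-strong digraph $D-\{s_2,t_2\}$. Take any $(s_2,t_2)$-path $Q$ in $D-\{s_1,t_1\}$; you may assume it meets every $P_i$. Let $z$ and $w$ be the first and last vertices of $Q$ in $\bigcup_i V(P_i)\setminus\{s_1,t_1\}$, with $z\in V(P_j)$ and $w\in V(P_k)$. Let $b$ be the predecessor of $t_1$ on $P_j$ and $c$ the successor of $s_1$ on $P_k$. Then $b\neq c$ and they are adjacent. Also $cb\notin A(D)$, since $s_1cbt_1$ would be a path of length $3$. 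Hence $bc\in A(D)$.

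The walk $Q[s_2,z]\circ P_j[z,b]\circ bc\circ P_k[c,w]\circ Q[w,t_2]$ then contains an $(s_2,t_2)$-path disjoint from some $P_l$ with $l\notin\{j,k\}$. The connectivity $5$ enters exactly here: it gives three paths in $D-\{s_2,t_2\}$ and lets you delete a path of length at most $3$.
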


{In this paper}, we consider split digraphs and semicomplete split digraphs (the later name was first used  by Bang-Jensen and Wang in \cite{bang2025strong} and we point out that the definition of split digraphs used here differs from that in \cite{lamar2012split}, where the class of split digraphs was first introduced.). A {\bf split digraph} \( D = (V_1, V_2; A) \) is a digraph whose vertex set is a disjoint union of two nonempty sets \( V_1 \) and \( V_2 \) such that \( V_1 \) is an independent set and the subdigraph induced by \( V_2 \) is semicomplete, which is a natural generalization of the class of semicomplete digraphs. By definition, a split digraph is a digraph whose underlying undirected graph is a split graph \cite{stephane1977split}. A split digraph \( D = (V_1, V_2; A) \) is {\bf semicomplete} if every vertex in the independent set \( V_1 \) is adjacent to every vertex in \( V_2 \). 

Problems on split digraphs are often much harder than the corresponding problem for semicomplete digraphs: for instance, the hamiltonian cycle problem is NP-complete for general split digraphs, while it is polynomial for semicomplete split digraphs. The later follows from \cite{bangJGT29} and the fact that semicomplete split digraphs also semicomplete multipartite. As another example, the  2-linkage problem admits a polynomial-time algorithm for semicomplete digraphs, while its complexity remains open for general split digraphs. It is polynomial for semicomplete split digraphs (see Section \ref{sec:remarks}).\\


The main result of this paper provides an affirmative answer to a question by Bang-Jensen and Wang \cite{bang2025strong}.
\begin{theorem}\label{split digraph:main theorem 1}
     Every 6-strong split digraph is 2-linked.
\end{theorem}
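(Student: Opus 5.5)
Our plan is to reduce to the semicomplete part $D[V_2]$ and use Theorem~\ref{5-strong semicomplete digraph is 2-linked} wherever possible, handling the independent set $V_1$ by local rerouting. Let $D=(V_1,V_2;A)$ be 6-strong and fix distinct terminals $s_1,s_2,t_1,t_2$. Since $V_1$ is independent, every vertex $v\in V_1$ has all its in- and out-neighbours in $V_2$, and 6-strong connectivity gives $d^+(v),d^-(v)\ge 6$. As a first step we replace each terminal lying in $V_1$ by a suitable neighbour in $V_2$. For $s_i\in V_1$ we choose an out-neighbour $s_i'\in V_2$, and for $t_i\in V_1$ an in-neighbour $t_i'\in V_2$, such that the four new terminals (old terminals in $V_2$ together with the substitutes) are distinct and avoid the remaining original terminals. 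This is possible because each $V_1$-terminal has at least $6$ neighbours on the relevant side and at most $3$ of them can be forbidden. The problem is then reduced to linking $s_1'\to t_1'$ and $s_2'\to t_2'$ in $D'=D-(\{s_1,s_2,t_1,t_2\}\cap V_1)$, and the paths may still use vertices of $V_1$.

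Next we need a connectivity statement for the semicomplete part. We would like $D[V_2]$, or a slight modification of it, to be 5-strong. However, paths in $D$ may pass through $V_1$ via segments $u\to v\to w$ with $u,w\in V_2$. The key construction is therefore an auxiliary semicomplete digraph $H$ on $V_2$, obtained from $D[V_2]$ by adding the arc $uw$ whenever some $v\in V_1$ has $uv,vw\in A$. Since $D[V_2]$ is already semicomplete, $H$ is semicomplete, and any path of $H$ lifts to a walk of $D$. We then show that $H$ minus at most the substituted terminal vertices remains 5-strong. The argument is Menger-type: a separator $S$ of size at most $4$ in $H$ (together with the at most $1$ or $2$ deleted $V_1$-terminals) would give a separator of size at most $5$ in $D$, which contradicts 6-strong connectivity. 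The slack of one vertex is exactly what the deleted $V_1$-terminals consume, and this is the reason the hypothesis is $6$ rather than $5$.

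Applying Theorem~\ref{5-strong semicomplete digraph is 2-linked} to $H$ gives disjoint paths $P_1',P_2'$ in $H$. We lift each shortcut arc $uw$ of $H$ that is not an arc of $D$ to a path $uvw$ with $v\in V_1$. This step is the \textbf{main obstacle}. Two shortcut arcs, possibly one in $P_1'$ and one in $P_2'$, may want the same middle vertex $v$, or $v$ may be one of the original $V_1$-terminals. To handle this, we first choose the linkage to minimise the number of shortcut arcs. We then argue, by a case analysis, that conflicting shortcuts can be rerouted through $V_2$ using the semicompleteness of $D[V_2]$: between $u$ and $w'$ there is an arc in one direction, which either shortens a path or allows an exchange. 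Alternatively, $v$ has at least $6$ out-neighbours and $6$ in-neighbours, which gives alternative middle vertices or alternative endpoints. If the minimal choice does not suffice, we would fall back on a direct analysis. Take a shortest $(s_1',t_1')$-path $P$ in $D'$; it has few $V_1$ vertices because of the chords in $V_2$. Then we show that $D'-V(P)$ still contains an $(s_2',t_2')$-path, and if not, we exploit a small separator together with the semicomplete structure to reroute $P$, in the style of the proof of Theorem~\ref{5-strong semicomplete digraph is 2-linked} in \cite{bang19882}.
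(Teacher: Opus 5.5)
The step you flag as the main obstacle is the whole problem, and the proposal gives no argument for it. $H$ forgets that all arcs in $N^-(v)\times N^+(v)$ are realized by the single vertex $v\in V_1$, so a linkage in $H$ is only a relaxation of a linkage in $D$.

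Proposition~\ref{counterexample-split} makes this concrete. There all four terminals lie in $V_2$, so your substitution step does nothing. $H$ contains the disjoint paths $s_1y_1t_1$ and $s_2y_2t_2$, but the shortcut arcs $y_1t_1$ and $s_2y_2$ can each be realized only through $z_1$, and $D$ has no linkage. So in that instance every linkage of $H$ collides, and minimizing the number of shortcut arcs cannot by itself help.

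Your other remedies do not resolve such a collision either. The bound $d^{\pm}(v)\ge 6$ gives other neighbours of $v$, not other middle vertices for the fixed pair $(u,w)$, which may be unique. An arc between $u$ and $w'$ splices the start of one path onto the end of the other, producing an $(s_1',t_2')$- or $(s_2',t_1')$-path rather than a repaired linkage. The fallback (a shortest $(s_1',t_1')$-path plus rerouting around a small separator) just restates the task. Some use of the connectivity hypothesis inside the lifting step is required, and you do not supply one. The same example also shows your explanation of why $6$ is needed is misplaced: the obstruction at lower connectivity comes from $V_1$-vertices shared by both linkage paths, not from terminals lying in $V_1$.

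Separately, the claim that $H$ is $5$-strong is false in general. Write $T=\{s_1,s_2,t_1,t_2\}$. For $S\subseteq V_2$, $H-S$ is strong exactly when $D-(S\cup(T\cap V_1))$ has an $(x,y)$-path for all $x,y\in V_2\setminus S$. Being $6$-strong guarantees this only when $|S|+|T\cap V_1|\le 5$. Up to four terminals may lie in $V_1$, and already with two your own count gives $4+2=6$.

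For a concrete failure, take $V_2=A\cup B\cup\{c_1,c_2\}$ with the following structure:
\begin{itemize}
\item $D[A]$ and $D[B]$ are $6$-strong tournaments, $A\to B$, $B\to\{c_1,c_2\}\to A$, and $c_1\to c_2$;
\item $V_1=\{v_1,\dots,v_4\}$, each $v_i$ having the same $6$ in-neighbours in $B$ and the same $6$ out-neighbours in $A$.
\end{itemize}
This $D$ is a $6$-strong split digraph. But with $T=V_1$, the set $\{c_1,c_2\}$ separates $B$ from $A$ in $H$, so Theorem~\ref{5-strong semicomplete digraph is 2-linked} cannot be applied.

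For comparison, the paper never reduces to the semicomplete case. It deduces the statement from the local Theorem~\ref{split digraph:theorem}, since deleting $s_{3-i},t_{3-i}$ from a $6$-strong $D$ leaves a $4$-strong digraph. It proves that theorem directly with minimal paths:
\begin{itemize}
\item it pigeonholes the first and last intersection vertices of four $(s_2,t_2)$-paths with three $(s_1,t_1)$-paths;
\item it forces the short path $s_1cdabt_1$ with $b,c\in V_1$ and $a,d\in V_2$;
\item it pins down $P_{k_0}=s_1x_1x_2x_3x_4t_1$;
\item it derives a contradiction from either orientation between $d$ and $x_2$.
\end{itemize}
Terminals in $V_1$ need no special treatment in that argument.
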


Theorem \ref{split digraph:main theorem 1} is an immediate consequence of the following  result concerning  local connectivities of split digraphs.

\begin{theorem}\label{split digraph:theorem}
     Let \(D=(V_1,V_2;A)\) be a split digraph and \(s_1,s_2,t_1,t_2\) be distinct vertices of \(D\). If \(D-\{s_{3-i},t_{3-i}\}\) has three internally disjoint \((s_i,t_i)\)-paths and \(D-\{s_{i},t_{i}\}\) has four internally disjoint \((s_{3-i},t_{3-i})\)-paths for \(i=1\) or \(2\), then \(D\) has a pair of disjoint \((s_1,t_1)\)- and \((s_2,t_2)\)-paths.
\end{theorem}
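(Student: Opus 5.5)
By symmetry we may take $i=1$. So $D-\{s_2,t_2\}$ contains three internally disjoint $(s_1,t_1)$-paths $P_1,P_2,P_3$, and $D-\{s_1,t_1\}$ contains four internally disjoint $(s_2,t_2)$-paths $Q_1,\dots,Q_4$. The plan is a minimal-counterexample, path-shortening argument that exploits the split structure. We assume $D$ has no pair of disjoint $(s_1,t_1)$- and $(s_2,t_2)$-paths.

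\textbf{Reduction to minimal paths.} We choose the $P_j$ to be minimal: each is an induced (shortest possible) path, and the total length $\sum_j|P_j|$ is as small as possible. Since $V_1$ is independent, every vertex of $V_1$ on an induced path has both neighbours on that path in $V_2$. Since $D[V_2]$ is semicomplete, any two vertices of $V_2$ that are far apart on an induced path are joined by an arc going backwards. Hence the internal vertices of a minimal path contain only few $V_2$-vertices, roughly at most two consecutive ones, with $V_1$-vertices interleaved. Consequently each $P_j$ has bounded length: its interior is one of a small number of patterns such as $x$, $xy$, $xyz$ with $y\in V_2$, and so on.

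\textbf{Step 1: locate a free path.} If some $Q_\ell$ is disjoint from some $P_j$, we are done immediately. So every $Q_\ell$ meets every $P_j$. Because the internal vertices of the $P_j$ are few and structured, each $Q_\ell$ must pass through interior vertices of all three $P_j$'s. Four internally disjoint $Q$'s therefore use many distinct interior vertices of $P_1\cup P_2\cup P_3$. Counting these vertices against the short-path patterns forces the $P_j$'s to contain many $V_2$-vertices. This contradicts minimality, unless the $V_2$-vertices are arranged so that the semicomplete arcs among them yield shortcuts.

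\textbf{Step 2: rerouting.} In the remaining configurations I would perform explicit rerouting. Take two vertices $u,v\in V_2$, lying on different $P_j$ or on different $Q_\ell$. The arc between $u$ and $v$, in either direction, lets us splice a prefix of one path with a suffix of another. This produces either an $(s_1,t_1)$-path avoiding some $Q_\ell$, or an $(s_2,t_2)$-path avoiding some $P_j$. The case analysis splits according to whether $s_1,t_1,s_2,t_2$ lie in $V_1$ or $V_2$, which gives 16 cases reduced by symmetry, including the reversal of all arcs. For terminals in $V_2$, semicompleteness gives direct arcs to the interior $V_2$-vertices. Terminals in $V_1$ only have neighbours in $V_2$, so the first and last internal vertices of every path are in $V_2$. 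This pins down where the paths enter and leave.

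\textbf{Main obstacle.} The hardest part will be the rerouting analysis when the terminals lie in $V_1$ and the $P_j$ interiors consist of single $V_2$-vertices. Here there is little room, and one must argue that the four $Q$'s each hit the three one-vertex interiors. Four internally disjoint paths cannot all pass through three distinct single vertices, so this case should close by pigeonhole. The delicate subcase is when some $P_j$ has interior $x\,y\,z$ with $x,z\in V_2$ and $y\in V_1$. I would first try choosing the paths to minimize $|V(P_1)\cup V(P_2)\cup V(P_3)|$ and then use the arc between $x$ and $z$ to shorten. If $xz$ is an arc it shortcuts $y$; otherwise $zx$ is an arc, and I would try to use this backward arc to reroute one of the $Q$'s off $P_j$.
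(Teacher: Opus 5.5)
\textbf{There is a genuine gap.} Your plan rests on a structural claim that is false: minimality of the $P_j$ bounds neither their length nor the number of their $V_2$-vertices. Minimality only forbids forward chords, and backward arcs are exactly what it allows. For example, take $v_1,\dots,v_n\in V_2$ with arcs $v_iv_{i+1}$ and $v_jv_i$ for all $j>i+1$. Then $v_1v_2\cdots v_n$ is a minimal path of arbitrary length, and alternating $V_1$/$V_2$ examples are built the same way.

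So Step~1 has no ``short-path patterns'' to count against, and many $V_2$-vertices on a $P_j$ contradict nothing. Step~2 names a technique (splice along an arc between two $V_2$-vertices) but never says which prefix and suffix are used, or why the spliced path avoids some $P_j$ or some $Q_\ell$. That control is the whole content of the proof.

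Your ``main obstacle'' is also misplaced. Any $(s_1,t_1)$-path with at most three interior vertices misses one of the four internally disjoint $Q_\ell$. So all short configurations, including the $x\,y\,z$ pattern, are immediate. The real difficulty is the long-path case that your reduction wrongly excludes. The split by location of the terminals plays no role.

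What is missing is a way to build rerouting pieces that provably avoid the $P_j$'s, and an essential use of the fourth $Q$. Proposition~\ref{counterexample-split} shows the statement fails with three. The paper does this as follows.
\begin{enumerate}
\item Let $z_\ell$ and $w_\ell$ be the first and last vertices of $Q_\ell$ on $P_1\cup P_2\cup P_3$. Then $Q_\ell[s_2,z_\ell]$ and $Q_\ell[w_\ell,t_2]$ meet these paths only in $z_\ell$ and $w_\ell$.
\item With four $Q$'s and three $P$'s, pigeonhole puts two $z$'s on some $P_{i_0}$ and two $w$'s on some $P_{j_0}$. Let $a,b$ be the last two interior vertices of $P_{i_0}$ and $c,d$ the first two of $P_{j_0}$. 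Then some prefix ends at or before $a$, and some suffix starts at or after $d$.
\item Any arc from $\{a,b\}$ to $\{c,d\}$ then gives an $(s_2,t_2)$-path avoiding another $P_{k_0}$. The arcs $ca$, $cb$, $db$ give $(s_1,t_1)$-paths with at most three interior vertices.
\item Since $a\to b$ and $c\to d$, this forces $b,c\in V_1$, $a,d\in V_2$ and $da\in A(D)$. So $s_1cdabt_1$ is an $(s_1,t_1)$-path whose four interior vertices lie on distinct $Q$'s.
\item With this path in hand, $P_{k_0}$ is pinned down to $s_1x_1x_2x_3x_4t_1$ with $x_2\in V_2$. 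Both orientations of the arc between $d$ and $x_2$ then give a contradiction.
\end{enumerate}
Without the first/last-intersection device, your splicing step has no control over disjointness.
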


The bound on local connectivity in Theorem~\ref{split digraph:theorem} is tight, as shown in Proposition~\ref{counterexample-split}.
This illustrates a difference between split digraphs and semicomplete digraphs for which Bang-Jensen proved the following sufficient condition (which is also tight)

\begin{theorem}\label{thm:SD2link}\cite{bang19882}
     Let \(D=(V_1,V_2;A)\) be a semicomplete digraph and \(s_1,s_2,t_1,t_2\) be distinct vertices of \(D\). If \(D-\{s_{3-i},t_{3-i}\}\) has three internally disjoint \((s_i,t_i)\)-paths and \(D-\{s_{i},t_{i}\}\) has two internally disjoint \((s_{3-i},t_{3-i})\)-paths for \(i=1\) or \(2\), then \(D\) has a pair of disjoint \((s_1,t_1)\)- and \((s_2,t_2)\)-paths.
\end{theorem}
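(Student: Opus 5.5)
We take $i=1$; the case $i=2$ is symmetric. Fix three internally disjoint $(s_1,t_1)$-paths $P_1,P_2,P_3$ in $D-\{s_2,t_2\}$, chosen so that each $P_j$ is a minimal path and the total length $|V(P_1)|+|V(P_2)|+|V(P_3)|$ is as small as possible. If some $P_j$ can be avoided by an $(s_2,t_2)$-path in $D-V(P_j)$, we are done. So we assume that every $(s_2,t_2)$-path meets all three $P_j$, and we aim for a contradiction. In particular $s_2t_2\notin A(D)$, so $t_2\to s_2$.

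\textbf{Step 1: domination pattern of an outside vertex on a minimal path.} Let $P=x_0x_1\dots x_m$ be one of the $P_j$. Minimality means there is no arc $x_ax_b$ with $b\ge a+2$, so by semicompleteness $x_b\to x_a$ for all $b\ge a+2$. Let $u\notin V(P)$. If $x_b\to u\to x_a$ with $a\ge b+3$, then $x_b u x_a$ shortens $P$, contradicting minimality. Consequently the in-neighbours of $u$ on $P$ come after its out-neighbours, up to a window of length at most two. Precisely, there is an index $r(u)$ such that $u\to x_a$ for every $a\le r(u)$ and $x_a\to u$ for every $a\ge r(u)+3$.

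\textbf{Step 2: each $P_j$ offers a detour.} Apply Step 1 to $u=s_2$ and $u=t_2$ on each $P_j$. We may assume $P_j$ has an internal vertex, since otherwise $s_1t_1$ is an arc and we can take $P_1=s_1t_1$; then any $(s_2,t_2)$-path in $D-\{s_1,t_1\}$ works. Use the backward arcs $x_b\to x_a$ with $b\ge a+2$ to hop inside $P_j$. Then $s_2\to x_a$ for small $a$, and $x_b\to t_2$ for large $b$, unless the windows of $s_2$ and $t_2$ are badly placed. This gives a short $(s_2,t_2)$-path using only a few vertices of a single $P_j$, say at most three consecutive ones near $r(s_2)$ and $r(t_2)$. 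Such a path avoids the other two paths entirely. This contradicts the assumption of the first paragraph unless, for each $j$, every such detour is blocked. Blocking means that on each $P_j$ the out-neighbours of $s_2$ all lie after the in-neighbours of $t_2$, that is, $r(t_2)+3\le r(s_2)$ essentially.

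\textbf{Step 3: use the two $(s_2,t_2)$-paths to reach a contradiction.} Let $Q_1,Q_2$ be the two internally disjoint $(s_2,t_2)$-paths in $D-\{s_1,t_1\}$. In the blocked situation, consider the last vertex $y$ of $Q_k$ on $\bigcup_j P_j$ and the first such vertex $z$. Use $Q_k[s_2,z]$, then backward arcs along the $P_j$ containing $z$, then $Q_k[y,t_2]$, staying on one $P_j$ where possible. Since $Q_1$ and $Q_2$ are disjoint and there are three paths $P_j$, by pigeonhole some $P_j$ is used as an entry or exit by at most one of the $Q_k$ in a conflicting way. Step 1 applied to intermediate vertices of $Q_k$ lets us reroute $Q_k$ so that it meets only one $P_j$ and leaves another untouched.

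The main obstacle is exactly this Step 3 case analysis. There we must show that two disjoint $(s_2,t_2)$-paths cannot simultaneously be forced through all three minimal $(s_1,t_1)$-paths. I would try first to exploit the total-length minimality: whenever some $Q_k$ hits two of the $P_j$, we exchange segments between them to obtain a shorter triple of $(s_1,t_1)$-paths, and this exchange yields the contradiction.
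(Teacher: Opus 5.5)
The paper does not reprove this theorem; it quotes it from Bang-Jensen. So there is nothing to compare line by line. Judged on its own, your proposal has a genuine gap.

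\textbf{Step~1 is false in the form you use it.} Your $P_j$ are minimal only among $(s_1,t_1)$-paths that avoid $s_2,t_2$ (and, with your total-length choice, that keep the triple internally disjoint).
\begin{itemize}
\item A shortcut $P[x_0,x_b]\,u\,P[x_a,x_m]$ through $u\in\{s_2,t_2\}$ is not such a path.
\item A shortcut through a vertex of another $P_k$ destroys internal disjointness.
\end{itemize}
So neither contradicts your choice. Concretely, one-way arcs $x_1\to s_2\to t_1$ on some $P_j=x_0x_1\cdots x_m$ with $m\ge 3$ are compatible with every hypothesis. They force $r(s_2)\le 0$, and then Step~1 demands $x_m=t_1\to s_2$, which is false. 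Hence $s_2$ and $t_2$ can attach to the $P_j$ in any pattern, and the windows and the \emph{blocking} condition of Step~2 have no basis. Even granting them, your detour must avoid $x_0=s_1$ and $x_m=t_1$, which you do not check.

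\textbf{Step~3 is not an argument.} You yourself call it the main obstacle. It applies Step~1 to vertices of $Q_k$ that may lie on other $P_j$'s, where nothing controls the arcs. Its pigeonhole (\emph{used as an entry or exit in a conflicting way}) is undefined, and it ends with an untested plan. Note also that no step you actually carry out uses the second $(s_2,t_2)$-path, although the bounds $3$ and $2$ are tight, so both hypotheses must enter essentially.

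\textbf{What is missing} is the mechanism the paper points to, which it carries out for its own results in Sections~3--5. Look not at how $s_2,t_2$ attach to the $P_j$, but at where each $Q_k$ first enters and last leaves $S=\bigcup_j V(P_j)\setminus\{s_1,t_1\}$, say at $z_k\in V(P_\alpha)$ and $w_k\in V(P_\beta)$.
\begin{itemize}
\item First record the counting consequences of the hypothesis. Any $(s_1,t_1)$-path in $D-\{s_2,t_2\}$ with at most one internal vertex is missed by one of $Q_1,Q_2$. Any $(s_2,t_2)$-path in $D-\{s_1,t_1\}$ with at most two internal vertices misses one of $P_1,P_2,P_3$.
\item If $\alpha=\beta$, use the backward arcs $x_b\to x_a$ ($b\ge a+2$) of the minimal path $P_\alpha$ — the correct part of your Step~1. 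They let you reroute $Q_k$ from $z_k$ to $w_k$ inside the interior of $P_\alpha$, missing the other two paths. The only exception is when $P_\alpha$ has exactly two internal vertices, both on $Q_k$; then the other $Q$ misses $P_\alpha$.
\item If $\alpha\ne\beta$, consider the arc between the predecessor $t_1^\alpha$ of $t_1$ on $P_\alpha$ and the successor $s_1^\beta$ of $s_1$ on $P_\beta$.
  \begin{itemize}
  \item If $t_1^\alpha\to s_1^\beta$, then $Q_k[s_2,z_k]\circ P_\alpha[z_k,t_1^\alpha]\circ t_1^\alpha s_1^\beta\circ P_\beta[s_1^\beta,w_k]\circ Q_k[w_k,t_2]$ is an $(s_2,t_2)$-path avoiding the third $P$.
  \item If $s_1^\beta\to t_1^\alpha$, then $s_1s_1^\beta t_1^\alpha t_1$ is an $(s_1,t_1)$-path whose two internal vertices must be split between $Q_1$ and $Q_2$.
  \end{itemize}
\end{itemize}
The remaining case analysis has to be driven by these forced arcs and this splitting, using both $Q$'s. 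Your proposal never reaches this point, so as it stands it does not prove the theorem.
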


As remarked in \cite{bang2025strong}, using the same approach as was used to prove Theorem \ref{thm:SD2link} in \cite{bang19882}, it is straightforward to verify that every 6-strong semicomplete split digraph is 2-linked (see the proof of Theorem \ref{theorem:6-strong semicomplete multipartite} ).
 In this paper, we improve this bound by providing a tight bound on the connectivity for the 2-linkage problem in semicomplete split digraphs.

\begin{theorem}\label{semicomplete split digraph: main theorem 2}
    Every 5-strong semicomplete split digraph is 2-linked.
\end{theorem}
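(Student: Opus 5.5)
Let $D=(V_1,V_2;A)$ be a 5-strong semicomplete split digraph and let $s_1,s_2,t_1,t_2$ be distinct vertices. The plan is to mimic the proof of Theorem~\ref{thm:SD2link}: first find a short $(s_1,t_1)$-path avoiding $s_2,t_2$ that is \emph{minimal} (induced, with no chords skipping forward), and then show that the remaining digraph still contains an $(s_2,t_2)$-path. Since $D$ is 5-strong, $D-\{s_2,t_2\}$ is 3-strong, so there are three internally disjoint $(s_1,t_1)$-paths there. Symmetrically, $D-\{s_1,t_1\}$ has three internally disjoint $(s_2,t_2)$-paths. This is one fewer path than Theorem~\ref{split digraph:theorem} demands for general split digraphs. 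The point of the proof is that full adjacency between $V_1$ and $V_2$ compensates for the missing path.

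\textbf{Step 1: minimal paths are short.} Let $P$ be a minimal $(s_1,t_1)$-path in $D-\{s_2,t_2\}$, and write $P=x_0x_1\dots x_m$. By minimality there are no arcs $x_ix_j$ with $j\ge i+2$. Any two vertices of $V_2$, and any vertex of $V_1$ paired with one of $V_2$, are adjacent. Hence every nonconsecutive pair $x_i,x_j$ with $j\ge i+2$ is either a backward arc $x_jx_i$ or lies entirely in $V_1$. As $V_1$ is independent, two consecutive vertices of $P$ cannot both lie in $V_1$, so $P$ alternates within $V_2$ up to isolated $V_1$ vertices. From this I will derive the following structural fact: every vertex $z\notin V(P)$ dominates or is dominated by ``most'' of $P$. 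More precisely, if $z$ has an in-neighbour and an out-neighbour on $P$, then one of the following holds:
\begin{itemize}
\item $z$ can be used to shortcut $P$ (that is, $x_i z x_j$ with $j\ge i+2$), contradicting minimality after replacing $P$ by a path of the same or shorter length; or
\item the arcs between $z$ and $V(P)$ form a pattern controlled by at most two ``switching'' positions.
\end{itemize}

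\textbf{Step 2: separator argument.} Suppose $D-V(P)$ has no $(s_2,t_2)$-path. Then there is a partition of $V(D)\setminus V(P)$ into a set $X\ni s_2$ and a set $Y\ni t_2$ with no arc from $X$ to $Y$. Among all minimal paths, choose $P$ so that $|X|$ is maximal, or equivalently so that $P$ is lexicographically extremal. Since $D$ is 5-strong, there are five internally disjoint $(s_2,t_2)$-paths. Each of them must meet $P$, and at most one of them avoids both $s_1$ and $t_1$ unless it uses internal vertices of $P$. So at least three of these paths pass through interior vertices of $P$, each entering from $X$ and leaving to $Y$. These paths give arcs $a\to x_i$ and $x_j\to b$ with $a\in X$ and $b\in Y$. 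Every vertex of $X\cap V_2$ and of $Y\cap V_2$ is adjacent to every vertex of $V_2$. Since there are no $X\to Y$ arcs, every pair in $(X\cap V_2)\times(Y\cap V_2)$ is an arc from $Y$ to $X$, and every $V_1$-vertex of $X$ is dominated by all of $Y\cap V_2$.

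Using the backward-arc structure of $P$ from Step 1, I will reroute as follows. Either we replace a segment of $P$ through a vertex of $Y$ or $X$ to obtain another minimal path with a larger $X$, which is a contradiction, or we obtain an $(s_2,t_2)$-path of the form $X\to x_i\to\cdots\to x_j\to Y$ together with a new $(s_1,t_1)$-path using the backward arcs around the segment $x_i\dots x_j$. The case analysis splits according to whether $s_2,t_2$ and the entry/exit vertices of $P$ lie in $V_1$ or in $V_2$. When all relevant vertices lie in $V_2$, the argument is essentially that of \cite{bang19882}. The cases in which they lie in $V_1$ use the fact that a $V_1$ vertex is adjacent to all of $V_2$.

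\textbf{Main obstacle.} The hard part is the configuration where $s_2$ or $t_2$, or many vertices of $P$, lie in $V_1$. Adjacency then fails inside $V_1$, and backward arcs between two $V_1$-vertices of $P$ do not exist. In this situation I expect to need a second choice of path. Rather than fixing $P$ first, I would choose the $(s_2,t_2)$-path $Q$ minimal and apply the same argument with the roles of the indices swapped. The aim is to show that at least one of the two orders succeeds, counting the five disjoint paths to rule out the bad configuration for both orders at once. Tightness will follow from the 4-strong non-2-linked tournaments of Theorem~\ref{5-strong semicomplete digraph is 2-linked}, viewed as semicomplete split digraphs with $|V_1|=1$, obtained by adding a vertex adjacent in both directions to everything, or more simply by using a single vertex as $V_1$.
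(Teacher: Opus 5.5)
Your proposal has a genuine gap. It is a plan whose decisive steps are announced rather than carried out (``I will derive'', ``I will reroute'', ``I expect to need'', ``the aim is''). Your last paragraph also concedes that the configurations with $V_1$-vertices on $P$ or among $s_2,t_2$ are not handled, and these are exactly where semicomplete split digraphs differ from semicomplete digraphs.

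\textbf{Step 1.} It proves no length bound at all. Minimality only forbids forward chords in $D\langle V(P)\rangle$, so a minimal path whose other arcs are all backward ($x_j\to x_i$, $j\ge i+2$) can be arbitrarily long. Your first bullet contradicts nothing: a detour $x_izx_j$ through $z\notin V(P)$ merely produces a different path, of the same length when $j=i+2$. The ``two switching positions'' pattern is never formulated, let alone proved.

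\textbf{Step 2.} The rerouting cannot work as described. By minimality every arc between $P[s_1,x_{i-1}]$ and $P[x_{j+1},t_1]$ points backwards, so $D\langle V(P)\rangle-\{x_i,\dots,x_j\}$ has no $(s_1,t_1)$-path, and ``backward arcs around the segment'' cannot help. A new $(s_1,t_1)$-path must pass through $X\cup Y$. Showing that this can be arranged disjointly from an $(s_2,t_2)$-path when $(X,Y)_D=\emptyset$ is the entire content of the theorem.

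\textbf{The extremal choice.} This cannot be waved through either, because an arbitrary minimal $P$ genuinely fails. In a $5$-strong tournament with $t_2\to s_2$, a long minimal $(s_1,t_1)$-path in $D-\{s_2,t_2\}$ can contain all of $N^+(s_2)$. So maximizing $|X|$ has to carry the whole proof, and you give no argument that it does, nor for its claimed equivalence with a lexicographic choice.

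For comparison, the paper never searches for one good $(s_1,t_1)$-path. It proves the local Theorem~\ref{semicomplete split digraph:theorem} as follows.
\begin{enumerate}
\item $5$-strongness gives three internally disjoint minimal $(s_1,t_1)$-paths $P_1,P_2,P_3$ in $D-\{s_2,t_2\}$ \emph{and} three internally disjoint $(s_2,t_2)$-paths $Q_1,Q_2,Q_3$ in $D-\{s_1,t_1\}$. Assuming no linkage, every $Q_i$ meets every $P_j$.
\item The full $V_1$--$V_2$ adjacency is exploited at the ends of the $P_j$. Let $S=\bigcup_{r=1}^{3}V(P_r)\setminus\{s_1,t_1\}$, and suppose $Q_i$ first meets $S$ on $P_j$ and last meets it on $P_k$. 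Then the predecessor $t_1^j$ of $t_1$ on $P_j$ and the successor $s_1^k$ of $s_1$ on $P_k$ both lie in $V_1$. An arc $s_1^k\to t_1^j$ would give an $(s_1,t_1)$-path with two interior vertices, missing some $Q_r$. An arc $t_1^j\to s_1^k$ would give an $(s_2,t_2)$-path hopping from $P_j$ to $P_k$ and missing the third $P_l$.
\item A case analysis on where the first and last intersection vertices $z_i,w_i$ lie then forces the relevant $P_j$ to have exactly five vertices with a prescribed $V_1/V_2$ pattern. One remaining arc is then contradictory in either orientation.
\end{enumerate}
This interplay among several $(s_1,t_1)$-paths is precisely what your Step~2 lacks. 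You note the three internally disjoint $(s_1,t_1)$-paths at the start but never use them, so you have no supply of alternative $(s_1,t_1)$-routes around the parts of $P$ that the $(s_2,t_2)$-paths occupy.

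A minor point on tightness: only ``take $|V_1|=1$'' is right. Joining a new vertex in both directions to all vertices of a $4$-strong tournament makes it $5$-strong and semicomplete, hence $2$-linked.
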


{Since each semicomplete digraph is also a semicomplete split digraph, it follows from Theorem \ref{5-strong semicomplete digraph is 2-linked} that the bound in Theorem \ref{semicomplete split digraph: main theorem 2} is also tight.}

As in the case of  general split digraphs, again we prove a more general result concerning  local connectivity of semicomplete split digraphs by proving  the following result. We have not been able to verify whether this is best possible in terms of local connectivities, or whether the bounds in Theorem \ref{thm:SD2link} also hold for semicomplete split digraphs.

\begin{theorem}\label{semicomplete split digraph:theorem}
     Let \(D=(V_1,V_2;A)\) be a semicomplete split digraph and \(s_1,s_2,t_1,t_2\) be distinct vertices of \(D\). If both \(D-\{s_{3-i},t_{3-i}\}\) and \(D-\{s_{i},t_{i}\}\) has three internally disjoint \((s_{3-i},t_{3-i})\)-paths for \(i=1\) or \(2\), then \(D\) has a pair of disjoint \((s_1,t_1)\)- and \((s_2,t_2)\)-paths.
\end{theorem}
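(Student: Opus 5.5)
Assume without loss of generality that $i=1$. Then both $D-\{s_1,t_1\}$ and $D-\{s_2,t_2\}$ contain three internally disjoint $(s_2,t_2)$-paths. (One could also read the hypothesis as three $(s_1,t_1)$-paths in $D-\{s_2,t_2\}$ and three $(s_2,t_2)$-paths in $D-\{s_1,t_1\}$; the plan below is symmetric enough to cover both readings.) I argue by contradiction, as in Bang-Jensen's proof of Theorem~\ref{thm:SD2link}. Suppose $D$ has no disjoint $(s_1,t_1)$- and $(s_2,t_2)$-paths.

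\textbf{Minimal path for one pair.} Fix a \emph{minimal} $(s_1,t_1)$-path $P=p_0p_1\cdots p_m$ in $D-\{s_2,t_2\}$, meaning one with no chords $p_ap_b$ for $b>a+1$. Its key structural feature is as follows.
\begin{itemize}
\item Any two vertices of $V(P)\cap V_2$ that are not consecutive on $P$ are joined only by backward arcs.
\item Consecutive $V_1$-vertices never occur on $P$, since $V_1$ is independent.
\item Hence $P$ alternates roughly between $V_2$-vertices and isolated $V_1$-vertices.
\item Backward arcs abound: $p_b\to p_a$ for $a<b-1$ whenever $p_a,p_b\in V_2$. In the semicomplete split case the same holds whenever at least one of the two ends is in $V_2$ and they are not consecutive, because $V_1$ is complete to $V_2$.
\end{itemize}

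\textbf{Each $(s_2,t_2)$-path must hit $P$.} By assumption every $(s_2,t_2)$-path in $D-V(P)$ fails to exist. So each of the three internally disjoint $(s_2,t_2)$-paths $Q_1,Q_2,Q_3$ in $D-\{s_1,t_1\}$ meets $P$. On each $Q_j$, let $x_j$ be the first vertex on $P$ and $y_j$ the last vertex on $P$. The plan is to show that some $Q_j$ can be rerouted so that it uses only a segment of $P$, while $P$ is rerouted around that segment through backward and forward arcs.
\begin{itemize}
\item \emph{If $x_j$ and $y_j$ are both in $V_2$.} Then the semicomplete structure of $V_2$ gives arcs between them and their neighbours on $P$. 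This lets us shortcut $P$ past a segment that $Q_j$ uses, or shortcut $Q_j$ through $P$, as in the semicomplete proof.
\item \emph{If some of them lie in $V_1$.} Use the semicomplete split property: a $V_1$-vertex is adjacent to every vertex in $V_2$. This lets us replace it by an adjacent $V_2$-vertex on $P$ at the cost of one step, which is where we lose one unit of connectivity compared with Theorem~\ref{thm:SD2link}.
\end{itemize}

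\textbf{Ordering the attachments.} With three disjoint paths $Q_j$, order the attachment indices along $P$. Typically the middle one can be chosen so that $P-(\text{segment})$ can be repaired. The repair uses an arc from $p_{a-1}$ or $p_{a-2}$ to $p_{b+1}$ or $p_{b+2}$, where $[a,b]$ is the segment used by the rerouted $Q_j$. Such an arc exists by minimality combined with semicompleteness whenever the endpoints are in $V_2$, or one is in $V_1$ and the other in $V_2$. The second family of three $(s_2,t_2)$-paths avoiding $s_2,t_2$ handles the configurations in which $s_2$ or $t_2$ themselves lie in $V_1$ and have too few useful neighbours on $P$.

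\textbf{Expected main obstacle.} The main obstacle is the case analysis when the relevant endpoints $p_{a-1}$ and $p_{b+1}$ are both in $V_1$, so no arc joins them. Here I would shift to $p_{a-2}$ or $p_{b+2}$, which lie in $V_2$ by alternation. I would then use the three $Q_j$ (rather than two) to guarantee an attachment pattern that leaves room for such a shift. If that fails, I would choose $P$ lexicographically extremal (shortest, then latest first $V_1$-vertex) to exclude the bad configuration.
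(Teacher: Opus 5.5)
There is a genuine gap, and it sits at the step meant to carry the proof. You fix one minimal $(s_1,t_1)$-path $P=p_0p_1\cdots p_m$ and propose to repair it around the segment $p_a\cdots p_b$ taken over by a rerouted $Q_j$. The repair is supposed to use an arc from $p_{a-1}$ or $p_{a-2}$ to $p_{b+1}$ or $p_{b+2}$, which you say exists ``by minimality combined with semicompleteness''. Every such arc is a forward chord of $P$, and minimality forbids exactly that. Minimality plus semicompleteness only produces backward arcs, as your own first paragraph says.

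Worse, in $D\langle V(P)\rangle$ the only arc entering $p_k$ from a smaller index is $p_{k-1}p_k$. Hence every $(s_1,t_1)$-path inside $D\langle V(P)\rangle$ uses all of $V(P)$. So no repaired $(s_1,t_1)$-path avoiding a segment of $P$ can stay on $V(P)$, and your plan says nothing about where it should go instead.

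Relatedly, you never really use the hypothesis of three internally disjoint $(s_1,t_1)$-paths in $D-\{s_2,t_2\}$; you work with a single $P$. The ``second family of three $(s_2,t_2)$-paths avoiding $s_2,t_2$'' cannot exist, and for the same reason your first reading of the hypothesis is vacuous. The remaining steps are intentions rather than arguments: ordering the attachments, shifting to $p_{a-2}$, and choosing $P$ lexicographically extremal.

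The missing idea is to exploit arcs between \emph{different} minimal $(s_1,t_1)$-paths $P_1,P_2,P_3$. Let $z_i,w_i$ be the first and last vertices of $Q_i$ in $\bigcup_j V(P_j)\setminus\{s_1,t_1\}$, with $z_i\in V(P_j)$ and $w_i\in V(P_k)$. Let $t_1^j$ be the predecessor of $t_1$ on $P_j$ and $s_1^k$ the successor of $s_1$ on $P_k$. If one of them lies in $V_2$, they are adjacent, and both orientations fail:
\begin{itemize}
\item If $s_1^k\to t_1^j$, you get an $(s_1,t_1)$-path with only two internal vertices, which misses some $Q$.
\item If $t_1^j\to s_1^k$, you can route $Q_i$ through $P_j[z_i,t_1^j]$, the arc $t_1^js_1^k$ and $P_k[s_1^k,w_i]$, missing every $P_l$ with $l\notin\{j,k\}$.
\end{itemize}
So $t_1^j,s_1^k\in V_1$, and their neighbours on these paths lie in $V_2$. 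This yields the arc $ca$ and the short path $s_1cabt_1$, which meets each $Q_i$ exactly once. From there the remaining path $P_{k_0}$ is pinned down vertex by vertex.

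The paper then splits into cases according to how the $z_i$ and $w_i$ distribute over $P_1,P_2,P_3$. In the fully spread case it shows that all six paths have length $4$, determines the structure, and rules out both orientations of the arc between $y_2$ and $s_1^1$. None of this is reachable from one minimal $P$ with local shortcuts.
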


A digraph is {\bf semicomplete multipartite} if it is obtained
from a complete multipartite graph by replacing every edge by an arc or a pair of opposite arcs. It is easy to observe that semicomplete split digraphs are exactly the special class of semicomplete multipartite digraphs, where one partition has order \(|V_1|\) and the remaining partitions all have order \(1\). We establish the following result for semicomplete multipartite digraphs.

\begin{theorem}\label{theorem:6-strong semicomplete multipartite}
    Every 6-strong semicomplete multipartite digraph is 2-linked.
\end{theorem}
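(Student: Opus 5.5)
The plan is to follow Bang-Jensen's approach for semicomplete digraphs (Theorem~\ref{thm:SD2link}) and use the local-connectivity counts directly. Let $D$ be a 6-strong semicomplete multipartite digraph with partite sets $X_1,\dots,X_m$, and let $s_1,s_2,t_1,t_2$ be distinct. Since $D-\{s_2,t_2\}$ is 4-strong, Menger's theorem gives four internally disjoint $(s_1,t_1)$-paths in it. Likewise $D-\{s_1,t_1\}$ contains four internally disjoint $(s_2,t_2)$-paths.

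\textbf{Step 1: choose short paths.} Among all $(s_1,t_1)$-paths in $D-\{s_2,t_2\}$, take a minimal one, i.e.\ an induced path $P=x_0x_1\dots x_r$ with $x_0=s_1$ and $x_r=t_1$. If $D-V(P)$ still contains an $(s_2,t_2)$-path, we are done. Otherwise $V(P)$ separates $s_2$ from $t_2$.

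\textbf{Step 2: exploit the multipartite structure.} Because $P$ is induced and $D$ is semicomplete multipartite, any two vertices $x_i,x_j$ with $j\ge i+2$ are either in the same partite set or joined only by the arc $x_jx_i$. Hence a vertex $v\notin V(P)$ cannot have an in-neighbour $x_i$ and an out-neighbour $x_j$ with $j\ge i+3$. Otherwise $x_i v x_j$ would shortcut $P$, contradicting minimality. Only a bounded window of positions is possible for such pairs.

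Next consider the four internally disjoint $(s_2,t_2)$-paths $Q_1,\dots,Q_4$ in $D-\{s_1,t_1\}$. Each $Q_j$ must meet an internal vertex of $P$. Using the windows above, I would choose an internal vertex $x_k$ of $P$ for rerouting. Since $D$ is 6-strong, $D-\{s_2,t_2,x_k,\dots\}$ still has many paths, and we want to replace a segment $x_{k-1}x_kx_{k+1}$ of $P$ by a detour $x_{k-1}vx_{k+1}$ or by an arc $x_{k-1}x_{k+1}$.

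\textbf{Step 3: the key argument.} Following Bang-Jensen's proof, take $P$ minimal and count vertices of $P$ hit by the $Q_j$. For two vertices of $P$ that are far apart and in different partite sets, there is a backward arc. These backward arcs let us build an $(s_2,t_2)$-path that uses only a few vertices of $P$. Then we show that $P$ can be rerouted around those few vertices using the remaining connectivity: after deleting $s_2$, $t_2$ and at most three vertices of the $Q$-path, 6-strongness still leaves a path.

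The main obstacle is the case where consecutive vertices of $P$ lie in the same partite set, so that backward arcs between them are missing. There the semicomplete argument breaks. I would handle it by noting that, for $x_i$ and $x_{i+2}$ in the same partite set, every other vertex is adjacent to both. This recovers enough of the semicomplete structure, at the cost of one extra unit of connectivity. That extra unit accounts for the bound 6 rather than 5.
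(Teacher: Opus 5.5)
\textbf{There is a genuine gap in your Step 3.} Whatever $(s_2,t_2)$-path $Q$ you assemble using backward arcs of $P$, it still contains an initial segment of some $Q_j$ from $s_2$ to $P$ and a final segment of some $Q_{j'}$ from $P$ to $t_2$. These segments can be arbitrarily long. Your rerouted $(s_1,t_1)$-path must avoid all of $V(Q)$, not only $V(Q)\cap V(P)$. Being $6$-strong only guarantees that deleting at most five vertices leaves a strong digraph. So ``delete $s_2$, $t_2$ and at most three vertices of the $Q$-path'' gives an $(s_1,t_1)$-path disjoint from $Q$ only if $Q$ has at most three internal vertices in total, and nothing in your construction ensures that. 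Because you kept only one $(s_1,t_1)$-path, you have no $(s_1,t_1)$-path that is known in advance to miss the new $Q$. The closing paragraph (recovering semicomplete structure ``at the cost of one extra unit'') does not say where that unit is actually spent. A minor point: the claim in Step 2 about vertices $v\notin V(P)$ requires $P$ to be a shortest path in $D-\{s_2,t_2\}$, not merely induced.

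\textbf{The missing idea is to keep all four internally disjoint $(s_1,t_1)$-paths} $P_1,\dots,P_4$ that you already obtained from Menger. Use just one $(s_2,t_2)$-path $Q_1$, and reroute it through only two of the $P_i$, so that an untouched $P_j$ serves as the disjoint $(s_1,t_1)$-path for free. This is what the paper does, in the following steps.

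\begin{enumerate}
\item If $D-\{s_2,t_2\}$ has an $(s_1,t_1)$-path of length at most $4$, delete its at most five vertices. The remainder is strong, so it contains an $(s_2,t_2)$-path. Hence assume no such path exists, and take the $P_i$ minimal.
\item Let $z$ and $w$ be the first and last vertices of $Q_1$ in $\bigcup_i V(P_i)\setminus\{s_1,t_1\}$, with $z\in V(P_1)$ and $w\in V(P_{j_0})$. Let $t_1^-$ precede $t_1$ on $P_1$, and let $s_1^+,s_1^{++}$ follow $s_1$ on $P_{j_0}$.
\item Since $s_1^+s_1^{++}$ is an arc, these two vertices lie in different partite sets. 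So one of them, call it $s'$, is adjacent to $t_1^-$.
\item The arc must be $t_1^-s'$. Otherwise $s_1s_1^+t_1^-t_1$ or $s_1s_1^+s_1^{++}t_1^-t_1$ would be a forbidden short $(s_1,t_1)$-path. Excluding paths of length exactly $4$ here is precisely where $6$-strong, rather than $5$-strong, is needed.
\item Now $Q_1[s_2,z]\circ P_1[z,t_1^-]\circ t_1^-s'\circ P_{j_0}[s',w]\circ Q_1[w,t_2]$ avoids every $P_j$ with $j\notin\{1,j_0\}$. If $j_0=1$ and $w$ follows $z$ on $P_1$, simply use $P_1[z,w]$ instead.
\item In the leftover case $w=s_1^+$ and $s'=s_1^{++}$, minimality of $P_{j_0}$ and the no-short-path assumption give a backward arc $us_1^+$ from a later vertex $u$ of $P_{j_0}$. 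Splice in $P_{j_0}[s_1^{++},u]\circ us_1^+$.
\end{enumerate}
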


{Theorem \ref{5-strong semicomplete digraph is 2-linked} has been generalized to two larger classes of digraphs, in particular to} quasi-transitive digraphs and locally semicomplete digraphs.
A digraph \(D\) is {\bf quasi-transitive} if, for every triple \(x,y,z\) of distinct vertices of \(D\) such that \(xy\) and \(yz\) are arcs of \(D\), there is at least one arc between \(x\) and \(z\). And a digraph \(D\) is {\bf locally semicomplete} if, for every vertex \(x\) in \(D\), both the in-neighbours and out-neighbours of \(x\) induce semicomplete digraphs. Clearly, {each semicomplete digraph is both quasi-transitive and locally semicomplete}. In \cite{bang1999linkages}, Bang-Jensen proved that every 5-strong quasi-transitive {digraph is 2-linked, and conjectured that this also holds for locally semicomplete {digraphs}. This conjecture was confirmed} by Bang-Jensen, Christiansen, and Maddaloni in \cite{bang2017disjoint}. 


{While we have so far focused on the tight connectivity bound for the 2-linkage problem, it is worth noting that tight connectivity bounds for the $k$-linkage problem with general $k$ have also been established for tournaments. Thomassen \cite{thomassen1984connectivity} was the first to prove that there exists a function $f(k)$ such that every $f(k)$-strong {tournament} is $k$-linked. Subsequently, a series of improvements on the connectivity bound have been obtained {for tournaments}, see~\cite{kuhn2014proof,pokrovskiy2015highly,Meng2021improved,bang2022every,ZHOU2023113351,chen2025new,ZHOU2026114700,GIRAO2019251}. In particular, regarding the tightness of the connectivity bound, Pokrovskiy proposed the following conjecture.}

\begin{conjecture} \cite{pokrovskiy2015highly}\label{pokrovskiy2015highly}
{There exists an integer \(f(k)\) such that every \(2k\)-strong tournament with minimum semi-degree at least \(f(k)\) is \(k\)-linked.}
\end{conjecture}

{In 2021, Girão, Popielarz, and Snyder \cite{girao20212} showed that every $(2k+1)$-strong tournament with minimum out-degree at least $Ck^{31}$ is $k$-linked for some constant $C$. In 2026, Zhou and Yan \cite{zhou2026counterexamples} disproved Conjecture \ref{pokrovskiy2015highly}. By combining these two results, it follows that $2k+1$ is the tight connectivity bound for the $k$-linkage problem in tournaments with sufficiently large minimum out-degree. In \cite{girao20212}, Girão, Popielarz, and Snyder further conjectured that a minimum out-degree of $O(k)$ is already sufficient. However, Zhou and Yan \cite{zhou2025proof} gave a negative answer to this conjecture and proved that every $(2k+1)$-strong semicomplete digraph with minimum out-degree at least $7k^2+36k$ is $k$-linked. This result also yields a tight minimum out-degree bound up to a constant factor.
In fact, we believe that an analogous result holds for split digraphs, (see Problem \ref{probelm-split:(2k+1) and ck^2}).

The paper is organized as follows. Section \ref{section 2} introduces {basic terminology, preliminary lemmas, and the counterexamples illustrating the tightness of Theorem \ref{split digraph:theorem}. 
The proofs of Theorem \ref{split digraph:theorem} and Theorem \ref{semicomplete split digraph:theorem} are presented in Section \ref{section 3} and Section \ref{section 4}, respectively}. The proof of Theorem \ref{theorem:6-strong semicomplete multipartite} is given in Section \ref{section 5}. Finally, we list some open problems in Section \ref{sec:remarks}.

\section{Terminology and Preliminaries}\label{section 2}


Notation not specified in this section is consistent with that in \cite{bang2008digraphs}. For an integer \(n\), \([n]\) denotes the set \(\{1,2,\dots,n\}\). Given a digraph \(D = (V, A)\) with vertex set \(V\) and arc set \(A\), the order of \(D\) is denoted by \(|D|\). All digraphs considered herein are simple, i.e., without loops and multiple arcs.

For an arc \((u,v)\), abbreviated as \(uv\), we say that \(u\) {\bf dominates} \(v\) (\(v\) {\bf is dominated by} \(u\)) and sometimes write \(u \to v\) to emphasize the direction of the arc. For vertex sets \(X,Y\subseteq V(D)\), define \((X,Y)_D = \{xy \in A(D) : x \in X,\, y \in Y\}\). For a vertex \(x\in V(D)\), let
\[
N_D^+(x) = \{y \mid xy \in A(D)\}, \quad d_D^+(x) = |N_D^+(x)|
\]
denote the {\bf out-neighborhood} and {\bf out-degree} of \(x\), respectively; the {\bf in-neighborhood} \(N_D^-(x)\) and {\bf in-degree} \(d_D^-(x)\) are defined analogously. For \(X\subseteq V(D)\), the digraph \(D\setminus X\) is obtained by deleting \(X\) and all arcs incident to \(X\). For disjoint \(X,Y\subseteq V(D)\):
\begin{itemize}
\item \(X \to Y\) means every vertex in \(X\) dominates every vertex in \(Y\);
\item \(X \Rightarrow Y\) means \((Y,X)_D = \emptyset\);
\item \(X \mapsto Y\) means both \(X\to Y\) and \(X\Rightarrow Y\) hold.
\end{itemize}

Let \(P = x_1x_2\cdots x_k\) be an \((x_1,x_k)\)-path. For \(1 < i \le k\), the {\bf predecessor} of \(x_i\) on \(P\) is \(x_{i-1}\); for \(1\le i < k\), the {\bf successor} of \(x_i\) on \(P\) is \(x_{i+1}\). The {\bf length} of \(P\) is the number of arcs it contains. For \(i<j\), define \(P[x_i,x_j]\) as the subpath \(x_ix_{i+1}\cdots x_j\) and \(P[x_i,x_j)\) as \(x_ix_{i+1}\cdots x_{j-1}\). An \((x,y)\)-path \(P\) is {\bf minimal} if no shorter \((x,y)\)-path exists in the induced subdigraph \(D\langle V(P)\rangle\). For \(X,Y\subseteq V(D)\), an \((x,y)\)-path \(P\) is an {\bf \((X,Y)\)-path} if \(x\in X\), \(y\in Y\), and \(V(P)\cap(X\cup Y)=\{x,y\}\). 

Let \(Q = y_1y_2\cdots y_t\) be another path. The paths \(P\) and \(Q\) are {\bf disjoint} if \(V(P)\cap V(Q)=\emptyset\), and {\bf internally disjoint} if
\[
\{x_2,\dots,x_{k-1}\}\cap V(Q) = \emptyset \quad \text{and} \quad V(P)\cap \{y_2,\dots,y_{t-1}\}=\emptyset.
\]
If \(x_k = y_1\), we denote by \(P\circ Q\) the concatenated path \(x_1x_2\cdots x_ky_2\cdots y_t\).

For a digraph \(D=(V,A)\) and vertices \(x,y\in V\), let \(\kappa(x,y)\) denote the maximum number of internally disjoint \((x,y)\)-paths in \(D\); we refer to \(\kappa(x,y)\) as the {\bf local strong connectivity} from \(x\) to \(y\). The following consequence of Menger's theorem will be used repeatedly throughout the proofs in this paper.

\begin{theorem}\label{Menger}\cite{MengerZurAK}(Menger) Let \( D = (V, A) \) be a digraph. Then \( D \) is \( k \)-strong if and only if \( |V(D)| \geq k + 1 \) and \( D \) contains \( k \) internally disjoint \( (s,t) \)-paths for every choice of distinct vertices \( s,t \in V \).
\end{theorem}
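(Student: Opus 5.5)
This is Menger's theorem, in the form of its global consequence (Whitney-type). I would derive it from the local vertex version: for distinct non-adjacent $s,t$, the maximum number of internally disjoint $(s,t)$-paths equals the minimum size of an $(s,t)$-separating vertex set avoiding $s,t$.

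\emph{Necessity.} Suppose $D$ is $k$-strong. Then $|V(D)|\ge k+1$ holds by definition. Fix distinct $s,t$.

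If $st\notin A(D)$, any set $S\subseteq V\setminus\{s,t\}$ separating $s$ from $t$ must have $|S|\ge k$, since otherwise $D-S$ contains no $(s,t)$-path, contradicting $k$-strong connectivity. The local Menger theorem then yields $k$ internally disjoint $(s,t)$-paths.

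If $st\in A(D)$, consider $D'=D-st$. I claim every $(s,t)$-separator in $D'$ has size at least $k-1$. Suppose instead that $S$ with $|S|\le k-2$ separates $s$ from $t$ in $D'$. Since $|V|\ge k+1$, there is a vertex $w\notin S\cup\{s,t\}$. In $D'-S$, the vertex $w$ is either unreachable from $s$ or cannot reach $t$. Say it cannot reach $t$; the other case is symmetric. Then in $D-(S\cup\{s\})$, a set of at most $k-1$ vertices, every $(w,t)$-path would have to use the arc $st$, which is impossible because $s$ has been deleted. This contradicts $k$-strong connectivity. So $D'$ has $k-1$ internally disjoint $(s,t)$-paths, and adding the path $st$ gives $k$ such paths in $D$.

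\emph{Sufficiency.} Suppose $|V(D)|\ge k+1$ and every ordered pair $(x,y)$ of distinct vertices has $k$ internally disjoint $(x,y)$-paths. Let $|S|\le k-1$ and let $x,y\notin S$ be distinct. The internal vertex sets of the $k$ paths are pairwise disjoint, and at most one of the paths is the direct arc $xy$. Hence $S$ meets at most $k-1$ of the paths, so at least one path survives in $D-S$. Therefore $D$ is $k$-strong.

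The only delicate step is the adjacent case in the necessity direction. It is handled by the arc-deletion argument above, which crucially uses $|V|\ge k+1$ to find the witness vertex $w$.
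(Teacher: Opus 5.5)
Your proof is correct. The paper gives no proof of this statement: it quotes it as the classical global form of Menger's theorem with a citation. So there is no competing argument to compare with. What you have written is the standard reduction of the global statement to the local vertex form of Menger's theorem, and that local min–max theorem remains the only non-trivial input.

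Your handling of the case $st\in A(D)$ is right and is the actual content of the reduction. You delete the arc, use $|V(D)|\ge k+1$ to find a witness $w\notin S\cup\{s,t\}$, and then remove $S\cup\{s\}$ or $S\cup\{t\}$ so that no surviving path can use $st$. This is exactly why the hypothesis $|V(D)|\ge k+1$ appears in the statement.

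Two small points of precision:
\begin{enumerate}
\item The local theorem should be stated under the hypothesis $st\notin A(D)$, not ``$s,t$ non-adjacent''. In your case $st\notin A(D)$, and likewise in $D-st$, the reverse arc $ts$ may be present, so the hypothesis as you phrased it does not literally apply. This is harmless: $ts$ lies on no $(s,t)$-path and can be deleted first.
\item In the sufficiency direction, the remark that at most one path is the arc $xy$ is not needed. Since $S\subseteq V\setminus\{x,y\}$ and the interiors are pairwise disjoint, each vertex of $S$ meets at most one of the $k$ paths. Hence at most $|S|\le k-1$ paths are destroyed, and at least one survives in $D-S$.
\end{enumerate}
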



Next, we present a definition to simplify the exposition in subsequent proofs.

\begin{Definition}\label{Definition:good}
    Let \(D\) be a digraph and \(s_1, s_2, t_1, t_2\) be distinct vertices of \(D\). We say that the tuple \((D, s_1, t_1, s_2, t_2)\) is {\bf good} if \(D\) contains two disjoint paths \(P_1\) and \(P_2\) such that \(P_i\) is an \((s_i, t_i)\)-path for each \(i \in \inttwo\).
\end{Definition}

For a pair $s,t$ of distinct vertices of a digraph $D=(V,A)$, a set $S \subseteq V(D) - \{s,t\}$ is an {\bf $(s,t)$-separator} if $D - S$ has no $(s,t)$-paths.
{Proposition \ref{counterexample-split} implies that the local connectivity bounds \( 3 \) and \( 4 \) in Theorem \ref{split digraph:theorem} are tight.

\begin{proposition}\label{counterexample-split}
    There exists a split digraph \(D=(V_1,V_2;A)\) and four vertices \(s_1,t_1,s_2,t_2\in V(D)\) such that \( \kappa_{D\setminus\{s_2,t_2\}}(s_1,t_1)= 3 \) and \( \kappa_{D \setminus \{s_1,t_1\}}(s_2,t_2) = 3 \), but \(D\) contains no good tuple \((D, s_1, t_1, s_2, t_2)\).
\end{proposition}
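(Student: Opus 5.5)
We will prove Proposition~\ref{counterexample-split} by exhibiting an explicit small split digraph and verifying its properties by hand. We place all four terminals in the independent set $V_1$, so that every path leaves $s_i$ into $V_2$ and enters $t_i$ from $V_2$. We also add a few auxiliary vertices $w_1,\dots,w_m$ to $V_1$. Since $V_1$ is independent, each $w_j$ can only be traversed in the pattern $u\,w_j\,v$ with $u,v\in V_2$; the $w_j$ therefore act as prescribed ``bridges'' between parts of $V_2$ that the semicomplete structure would otherwise not connect in that direction.

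Concretely, we split $V_2$ into blocks $A,B,C,\dots$ and orient all arcs between blocks in one direction, e.g.\ $B\Rightarrow A$, so that inside $V_2$ one can only move ``backwards''. We then choose $N^+(s_1),N^-(t_1),N^+(s_2),N^-(t_2)$ and the bridges so that three conditions hold. First, every $(s_1,t_1)$-path must move forward and hence use a bridge. Second, every $(s_2,t_2)$-path must move backward inside $V_2$. Third, the endpoints of the bridges are interleaved with the neighbourhoods of $s_2,t_2$ so that these two requirements cannot be satisfied simultaneously.

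A warning case shows why the interleaving is needed. Take $N^+(s_1)=N^-(t_2)=A$ and $N^+(s_2)=N^-(t_1)=B$ with bridges $a_iw_ib_i$. Then $P_1=s_1a_iw_ib_i t_1$ and $P_2=s_2b_ja_kt_2$ with $j,k\neq i$ are disjoint. So the neighbourhoods must be chosen so that the single backward arc needed by $P_2$ is forced to use the same vertices as every bridge route of $P_1$. We will achieve this by taking $V_2$ to be a near-transitive tournament $v_1v_2\cdots v_n$ with only the arcs $v_jv_i$ for $j>i$. We let $s_2$ dominate only the top vertices and $t_2$ be dominated only by the bottom ones, and we let the bridges $v_iw v_j$ (with $i<j$) be the only forward moves. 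The nesting of the bridge intervals is then tuned, as in the $4$-strong non-$2$-linked tournaments behind Theorem~\ref{5-strong semicomplete digraph is 2-linked}, so that every forward route of $P_1$ together with its endpoints cuts $\{$top$\}$ from $\{$bottom$\}$ in the backward order.

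The verification then has three parts. (i) We list three internally disjoint $(s_1,t_1)$-paths in $D\setminus\{s_2,t_2\}$, and three internally disjoint $(s_2,t_2)$-paths in $D\setminus\{s_1,t_1\}$. (ii) We exhibit $(s_1,t_1)$- and $(s_2,t_2)$-separators of size $3$ (for instance $N^+(s_1)$ and $N^+(s_2)$ when these have size $3$), which by Theorem~\ref{Menger} shows that both local connectivities are exactly $3$. (iii) We show that no good tuple exists. For (iii) we take an arbitrary minimal $(s_1,t_1)$-path $P_1$; by the forced structure it consists of segments in $V_2$ going backward, joined by bridges. We then show that $V(P_1)\cap V_2$ separates $N^+(s_2)$ from $N^-(t_2)$ in $D\setminus V(P_1)$, by a case analysis on which bridges $P_1$ uses.

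Step (iii) is the main obstacle: getting the interval pattern of the bridges right so that connectivity stays $3$ while every $P_1$ still blocks $s_2$. I would first try to adapt Bang-Jensen's tournament obstruction, replacing arcs of the tournament by bridge vertices in $V_1$. Failing that, I would search small cases, with $|V_2|\le 8$ and $3$--$4$ bridges, for a configuration satisfying (i)--(iii).
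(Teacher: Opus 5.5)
Your proposal has a genuine gap. The statement is purely existential, so the proof consists of an explicit digraph together with verification of both local connectivities and of non-goodness. You never fix a digraph; the text stops at what you would try or search for.

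Moreover, the one concrete configuration you commit to cannot be tuned into a counterexample, however the bridges are chosen. With $s_2,t_2\in V_1$ we have $N^+(s_2),N^-(t_2)\subseteq V_2$. If $D\langle V_2\rangle$ is the tournament with arcs $v_jv_i$ for $j>i$ and $N^+(s_2)$ lies above $N^-(t_2)$, then every $x\in N^+(s_2)$ dominates every $y\in N^-(t_2)$, so $s_2xyt_2$ is a path. Hence, if the tuple is not good, every $(s_1,t_1)$-path in $D\setminus\{s_2,t_2\}$ must contain all of $N^+(s_2)$ or all of $N^-(t_2)$; otherwise choose such $x,y$ off the path. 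Both sets are nonempty (indeed of size at least $3$) and avoid $s_1,t_1\in V_1$. So each set lies in at most one of three internally disjoint $(s_1,t_1)$-paths, and non-goodness is incompatible with $\kappa_{D\setminus\{s_2,t_2\}}(s_1,t_1)\ge 3$. In other words, the ``backward'' direction you reserve for the $(s_2,t_2)$-paths is the cheap direction inside $V_2$. It gives them two-vertex shortcuts that no single $(s_1,t_1)$-path can block.

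The missing idea is to deny both pairs any such shortcut by placing the relevant neighbourhoods in the independent set. The paper puts $s_1,t_1,s_2,t_2$ and a directed $3$-cycle $y_1y_2y_3$ in $V_2$, and puts $X=\{x_1,x_2,x_3\}$ and $Z=\{z_1,z_2,z_3\}$ in $V_1$. After deleting the other pair, $N^+(s_1)=N^-(t_2)=X$ and $N^-(t_1)=N^+(s_2)=Z$. This is exactly the neighbourhood pattern of your warning case, but with the roles of $V_1$ and $V_2$ swapped.

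Since there are no arcs between $X$ and $Z$, every path of either pair must pass through $Y$. Each $y_i$ is shared by the $(s_1,t_1)$-route $x_iy_iz_i$ and the $(s_2,t_2)$-route $z_{i-1}y_ix_{i+1}$ (indices mod $3$). This twist forces every $(s_2,t_2)$-path to contain a set $\{z_i,y_{i+1},x_{i+2}\}$ or $\{z_i,y_{i+1},y_{i+2}\}$. Each such set is checked directly to be an $(s_1,t_1)$-separator in $D\setminus\{s_2,t_2\}$. The routes above, together with $d^+(s_1)=d^+(s_2)=3$ in the relevant subdigraphs, give both local connectivities exactly $3$. To complete your argument, you need to produce a specific digraph of this kind and actually carry out your checks (i)--(iii) on it.
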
 }

\begin{proof}
We construct a digraph \(D\) as follows (see Figure \ref{Counterexample/Counterexample-split-(3,3)}). Let $V(D)=X\cup Y\cup Z$, where \(X=\{x_1,x_2,x_3\}, Y=\{y_1,y_2,y_3\}, Z=\{z_1,z_2,z_3\}\). And the arc set \(A(D)\) consists of the following arcs.
    \begin{itemize}
        \item \(\{y_1y_2, y_2y_3, y_3y_1\} \subseteq A(D)\);
        \item \(\{s_1s_2,t_1s_2,s_1t_2,t_1t_2,t_1s_1,t_2s_2\} \subseteq  A(D)\);
        \item \(\{s_1\} \mapsto X, Z \mapsto \{t_1\}, \{t_1\} \mapsto Y, Y \mapsto \{s_1\}\);
        \item The arcs of the paths $x_i\to y_i\to z_i$ for $i\in [3]$;
        \item \(X \cup Y \mapsto \{s_2\}, \{s_2\} \mapsto Z, \{t_2\} \mapsto Y \cup Z, X \mapsto \{t_2\}\);
        \item The arcs of the paths $z_i\to y_{i+1} \to x_{i+2}$ for $i\in [3]$, with subscripts modulo 3.
    \end{itemize}

 \begin{figure}[htbp] 
    \centering 
    \includegraphics[width=0.45\textwidth]{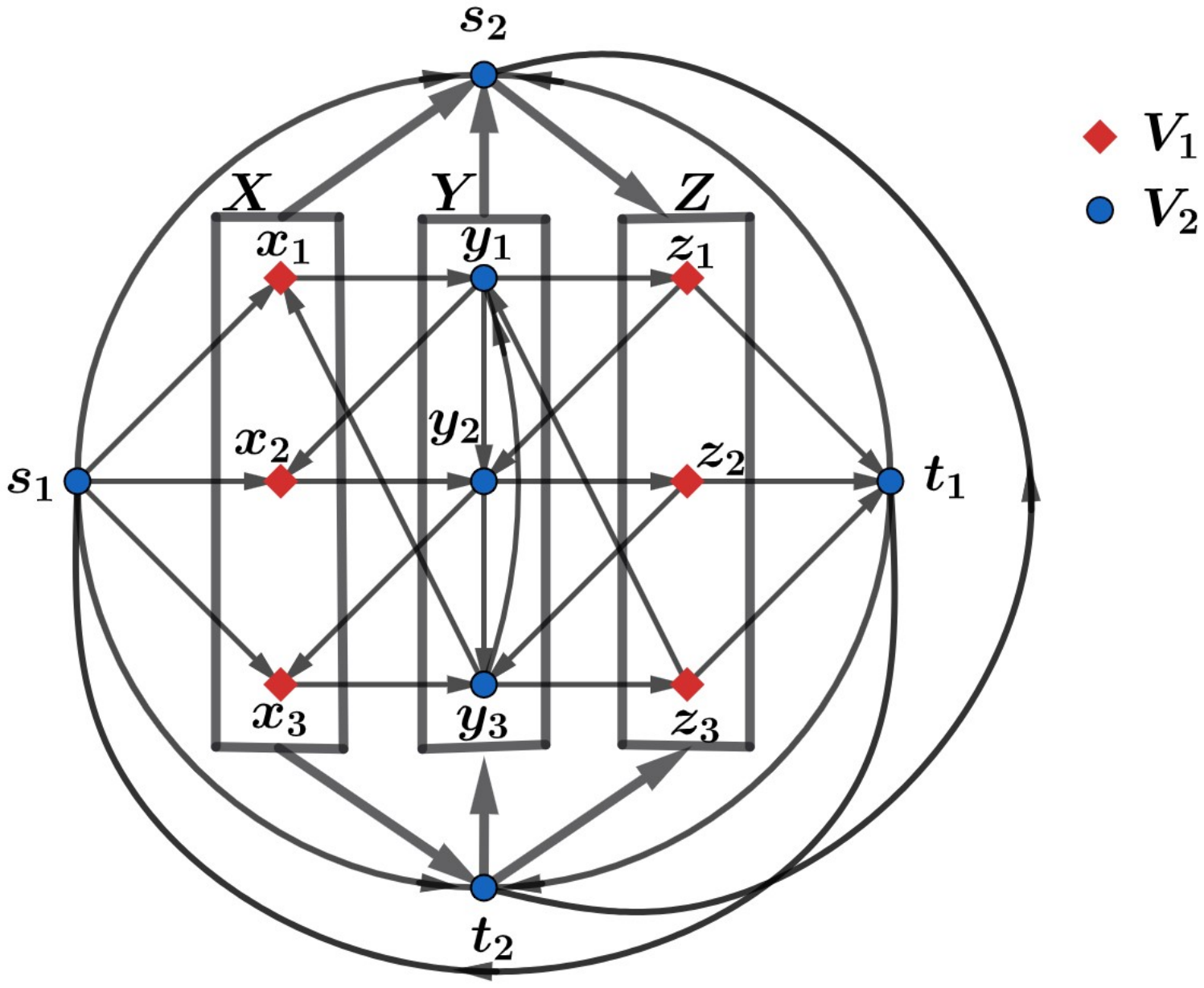}
  \caption{Red diamonds represent vertices in \(V_1\), blue  vertices represent vertices in \(V_2\). Let
$P_i = s_1 \to x_i \to y_i \to z_i \to t_1$
and
$Q_i = s_2 \to z_i \to y_{i+1} \to x_{i+2} \to t_2$,
where subscripts in $Q_i$ are taken modulo $3$.
Combined with $d_{D-\{s_2,t_2\}}^+(s_1)=3$ and $d_{D-\{s_1,t_1\}}^+(s_2)=3$,
this yields
$\kappa_{D\setminus\{s_2,t_2\}}(s_1,t_1)=3$
and
$\kappa_{D\setminus\{s_1,t_1\}}(s_2,t_2)=3$.} 
    \label{Counterexample/Counterexample-split-(3,3)} 
\end{figure}
    
    It is easy to check $D$ is a split digraph $D=(V_1,V_2;A(D))$, by letting \(V_1=X \cup Z\) and \(V_2=\{s_1,s_2,t_1,t_2\} \cup Y\). Furthermore, \(D\) satisfies  $\kappa_{D-\{s_2,t_2\}}(s_1,t_1)= 3$ and \( \kappa_{D-\{s_1,t_1\}}(s_2,t_2) = 3\). Next, we verify the following conclusion regarding $(s_1,t_1)$-separators in $D - \{s_2, t_2\}$.

   \begin{claim}\label{Claim:Counterexample-split-digraph-1}
    For each $i \in \intthree$, the set $\{z_i, y_{i+1}, x_{i+2}\}$
   with subscripts modulo $3$ is an $(s_1,t_1)$-separator in $D - \{s_2, t_2\}$.
   \end{claim}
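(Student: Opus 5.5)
Since $s_1$ has no out-neighbours in $D-\{s_2,t_2\}$ other than $X$, and $t_1$ has no in-neighbours in $D-\{s_2,t_2\}$ other than $Z$, every $(s_1,t_1)$-path in $D-\{s_2,t_2\}$ begins $s_1x_j$ and ends $z_kt_1$. My plan is to delete $S_i=\{z_i,y_{i+1},x_{i+2}\}$, compute the set $R$ of vertices reachable from $s_1$ in $D-\{s_2,t_2\}-S_i$, and show that $R\cap Z=\emptyset$. Then $t_1\notin R$, so $S_i$ is a separator.

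\textbf{Symmetry reduction.} The map $x_j\mapsto x_{j+1}$, $y_j\mapsto y_{j+1}$, $z_j\mapsto z_{j+1}$ (indices mod $3$), fixing $s_1,t_1,s_2,t_2$, is an automorphism of $D$. This is checked directly from the list of arcs: each family there is described uniformly in $i$, and the $3$-cycle $y_1y_2y_3y_1$ is preserved. The map sends $S_i$ to $S_{i+1}$, so it suffices to treat $i=1$, that is, $S_1=\{z_1,y_2,x_3\}$.

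\textbf{Out-neighbourhoods in $D-\{s_2,t_2\}$.} I read these off the construction.
\begin{itemize}
\item $x_j$ has out-neighbour $y_j$ only. Its other out-arcs go to $s_2$ and $t_2$, which are deleted.
\item $y_j$ has out-neighbours $y_{j+1}$ (cycle arc), $z_j$, $s_1$, and $x_{j+1}$. The last one comes from the path $z_{j-1}y_jx_{j+1}$.
\item $z_j$ has out-neighbours $t_1$ and $y_{j+1}$.
\end{itemize}

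\textbf{Reachability from $s_1$ after deleting $S_1$.}
\begin{itemize}
\item $s_1$ reaches $x_1$ and $x_2$, since $x_3$ is deleted.
\item $x_1\to y_1$, while $x_2\to y_2$ is lost because $y_2$ is deleted.
\item $y_1$ has out-neighbours $y_2$, $z_1$, $s_1$ and $x_2$; of these only $s_1$ and $x_2$ survive, and both are already reached.
\end{itemize}
Hence $R=\{s_1,x_1,x_2,y_1\}$. This set contains no vertex of $Z$, so $t_1$ is unreachable, which proves the claim for $i=1$ and therefore for all $i$.

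The only delicate point is getting the out-neighbourhood of each $y_j$ right, in particular the index shift in the arcs $y_{i+1}x_{i+2}$. I will double-check it against the paths $Q_i$ in the figure caption.
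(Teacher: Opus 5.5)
Your proof is correct and follows essentially the paper's argument. Both compute the vertices reachable from $s_1$ after deleting $\{s_2,t_2,z_i,y_{i+1},x_{i+2}\}$, namely $x_i,x_{i+1},y_i$, and observe that no vertex of $Z$, hence not $t_1$, is reached. The differences are cosmetic: you reduce to $i=1$ via the cyclic automorphism, while the paper treats general $i$ directly, and you list the arc $y_i s_1$ (harmless, since $s_1$ is already reached), which the paper silently omits from $N^+(y_i)$.
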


    \begin{proof}
    Suppose for contradiction that there exists some $i \in [3]$ such that $\{z_i, y_{i+1}, x_{i+2}\}$ (subscripts modulo $3$) is not an $(s_1,t_1)$-separator in $D - \{s_2, t_2\}$. By definition, this means there exists an $(s_1,t_1)$-path in $D - \{s_2, t_2, z_i, y_{i+1}, x_{i+2}\}$. As illustrated in Figure \ref{Counterexample/Counterexample-split-(3,3)}, $N_{D - \{s_2, t_2, z_i, y_{i+1}, x_{i+2}\}}^+(s_1) = \{x_i, x_{i+1}\}$. Note that 
    \begin{itemize}
        \item $N_{D - \{s_2, t_2, z_i, y_{i+1}, x_{i+2}\}}^+(x_{i+1}) = \emptyset$;
        \item $N_{D - \{s_2, t_2, z_i, y_{i+1}, x_{i+2}\}}^+(x_i) = \{y_i\}$;
        \item $N_{D - \{s_2, t_2, z_i, y_{i+1}, x_{i+2}\}}^+(y_i) = \{x_{i+1}\}$. 
    \end{itemize}
    Any path starting at $s_1$ in $D - \{s_2, t_2, z_i, y_{i+1}, x_{i+2}\}$ thus terminates at $x_{i+1}$ or $y_i$, with no continuation to $t_1$. Hence, no $(s_1,t_1)$-path exists in this digraph, a contradiction to assumption.
\end{proof}

We also have the following analogous result.

  \begin{claim}\label{Claim:Counterexample-split-digraph-2}
   For each $i \in \intthree$, the set $\{z_i, y_{i+1}, y_{i+2}\}$
   with subscripts modulo $3$ is an $(s_1,t_1)$-separator in $D - \{s_2, t_2\}$.
   \end{claim}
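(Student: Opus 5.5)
We argue exactly as in the proof of Claim \ref{Claim:Counterexample-split-digraph-1}. Fix $i\in\intthree$, put $S_i=\{z_i,y_{i+1},y_{i+2}\}$ (indices modulo $3$), and let $D_i=D-\{s_2,t_2\}-S_i$. We compute the out-neighbourhoods in $D_i$ of every vertex that can be reached from $s_1$, and check that $t_1$ is never among them. Since $N^-_{D-\{s_2,t_2\}}(t_1)=Z$ (the only arcs into $t_1$ come from $Z\mapsto\{t_1\}$), we only need to show that neither $z_{i+1}$ nor $z_{i+2}$ is reachable from $s_1$ in $D_i$.

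\textbf{Step 1: out-arcs in $D-\{s_2,t_2\}$.} We list them from the construction. The vertex $s_1$ has out-neighbours $X$. Each $x_j$ has the single out-neighbour $y_j$; its other out-arcs go to $s_2$ and $t_2$, which are deleted. Each $y_j$ has out-neighbours $z_j$, $y_{j+1}$, $x_{j+1}$ and $s_1$. Here $y_j\to x_{j+1}$ comes from the path $z_{j-2}y_{j-1}x_{j}$ shifted, i.e.\ $y_{k+1}\to x_{k+2}$. Each $z_j$ has out-neighbours $t_1$ and $y_{j+1}$.

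\textbf{Step 2: reachability in $D_i$.} In $D_i$ the only surviving vertex of $Y$ is $y_i$. From $s_1$ we reach $x_i,x_{i+1},x_{i+2}$. Both $x_{i+1}$ and $x_{i+2}$ have no out-neighbours in $D_i$, because their only out-neighbours $y_{i+1},y_{i+2}$ are deleted. From $x_i$ we reach $y_i$. The out-neighbours of $y_i$ are $z_i$ (deleted), $y_{i+1}$ (deleted), $x_{i+1}$ and $s_1$. So the set reachable from $s_1$ is $\{s_1\}\cup X\cup\{y_i\}$. This set contains no vertex of $Z$, hence $t_1$ is unreachable, and $S_i$ is an $(s_1,t_1)$-separator.

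\textbf{Main obstacle.} The only delicate point is reading the arc list correctly. In particular, we must confirm that the only arcs into $t_1$ come from $Z$ (the arc $t_1s_1$ points away from $t_1$), and that $y_j$ has no out-arc to any $z_k$ with $k\neq j$. The arcs $z_k\to y_{k+1}$ enter $Y$ rather than leave it, so they do not give such an out-arc. Once the out-neighbourhoods in $D-\{s_2,t_2\}$ are tabulated, the reachability check is routine.
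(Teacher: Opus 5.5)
Your proof is correct, but it runs in the opposite direction from the paper's proof of this claim.

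You do a forward search from $s_1$, just as the paper does for Claim~\ref{Claim:Counterexample-split-digraph-1}. You compute that the set reachable from $s_1$ in $D-\{s_2,t_2\}-\{z_i,y_{i+1},y_{i+2}\}$ is $\{s_1\}\cup X\cup\{y_i\}$. This needs the full out-neighbourhood of $y_i$, including the arc $y_i\to x_{i+1}$ coming from the path $z_{i-1}y_ix_{i+1}$, and you handle that correctly. (The phrase ``the path $z_{j-2}y_{j-1}x_j$ shifted'' is a roundabout way of naming $z_{j-1}y_jx_{j+1}$, but the conclusion is right.)

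The paper instead argues backwards from $t_1$. After the deletions, $N^-(t_1)=\{z_{i+1},z_{i+2}\}$. Each $z_j$ has $y_j$ as its only in-neighbour outside $\{s_2,t_2\}$, and both $y_{i+1}$ and $y_{i+2}$ have been removed. So $z_{i+1}$ and $z_{i+2}$ have empty in-neighbourhoods, and $t_1$ cannot be reached at all.

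The backward check is shorter and only uses the in-arcs of $Z$. Your own opening reduction, that it suffices to show $z_{i+1}$ and $z_{i+2}$ are unreachable, is one observation away from it. Your forward version gives the slightly stronger information of the exact reachable set and mirrors the proof of the preceding claim.
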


   \begin{proof}
      Suppose for contradiction that there exists some $i \in [3]$ such that $\{z_i, y_{i+1}, y_{i+2}\}$ (subscripts modulo $3$) is not an $(s_1,t_1)$-separator in $D - \{s_2, t_2\}$. By definition, this means there exists an $(s_1,t_1)$-path in $D - \{s_2, t_2, z_i, y_{i+1}, y_{i+2}\}$. As illustrated in Figure \ref{Counterexample/Counterexample-split-(3,3)}, $N_{D - \{s_2, t_2, z_i, y_{i+1}, y_{i+2}\}}^-(t_1) = \{z_{i+1},z_{i+2}\}$. Note that 
     
    \begin{itemize}
        \item $N_{D - \{s_2, t_2, z_i, y_{i+1}, y_{i+2}\}}^-(z_{i+1}) = \emptyset$;
        \item $N_{D - \{s_2, t_2, z_i, y_{i+1}, y_{i+2}\}}^-(z_{i+2}) = \emptyset$.
    \end{itemize}
    Hence, no $(s_1,t_1)$-path exists in $D - \{s_2, t_2, z_i, y_{i+1}, y_{i+2}\}$, contradicting assumption.
   \end{proof}

  As can easily be checked by inspecting Figure~\ref{Counterexample/Counterexample-split-(3,3)}, every $(s_2,t_2)$-path $P$ in $D$ satisfies the following: there exists some $i \in \intthree$ such that either $\{z_i, y_{i+1}, x_{i+2}\} \subseteq V(P)$ or $\{z_i, y_{i+1}, y_{i+2}\} \subseteq V(P)$ (with subscripts taken modulo $3$). Combining this observation with Claims \ref{Claim:Counterexample-split-digraph-1} and \ref{Claim:Counterexample-split-digraph-2}, we conclude that there is no $(s_2,t_2)$-path in $D$ that avoids intersecting some $(s_1,t_1)$-path.
  Thus, \(D\) is a split digraph with the prescribed local connectivities, but the tuple \((D, s_1, t_1, s_2, t_2)\) is not good.
    
\end{proof}

\section{Proof of Theorem \ref{split digraph:theorem}}\label{section 3}


{Before proving this conclusion, we first give the sketch of the proof by contradiction, i.e., we assume that \((D, s_1, t_1, s_2, t_2)\) is not good. By contradiction and the pigeonhole principle, we extract four important subpaths \(Q_{i'}^*, Q_{j'}^*, Q_{k'}^*, Q_{l'}^*\) from the internally disjoint \((s_2,t_2)\)-paths, construct a short \((s_1,t_1)\)-path \(P\), and fix an \((s_1,t_1)\)-path \(P_{k_0}\). By our hypothesis, any new \((s_2,t_2)\)-path incorporating \(Q_{i'}^*, Q_{j'}^*, Q_{k'}^*, Q_{l'}^*\) (along with other necessary path segments) must intersect \(P\), which enables us to fully characterize \(P_{k_0}\) and reach a contradiction from either arc direction between two vertices in \(V_2\).}\\

\noindent \textbf{Proof of Theorem \ref{split digraph:theorem}.}
   Without loss of generality, we may assume that \(i = 1\), i.e., \(D-\{s_{2},t_{2}\}\) has three internally disjoint \((s_1,t_1)\)-paths and \(D-\{s_{1},t_{1}\}\) has four internally disjoint \((s_{2},t_{2})\)-paths.   Let \(P_1, P_2, P_3\) be three internally disjoint minimal \((s_1,t_1)\)-paths in \(D - \{s_2,t_2\}\), and let \(Q_1,Q_2,Q_3,Q_4\) be {four} minimal internally disjoint \((s_2,t_2)\)-paths in \(D - \{s_1,t_1\}\).

   Suppose {to the contrary} that 
   \begin{equation}\label{assume}
     \text{\((D, s_1, t_1, s_2, t_2)\) is not good.}  
   \end{equation}
   {Then each \(Q_i\) (for \(i \in \intfour\)) intersects all \(P_j\) (for \(j \in \intthree\)).} Otherwise, there exists a pair of disjoint \((s_1,t_1)\)-path and \((s_2,t_2)\)-path, a contradiction. For each \(i \in \intfour\), denote by \(z_i\) and \(w_i\) the first and last vertices on \(Q_i\) intersecting \(\bigcup_{j\in [3]} V(P_j)\), respectively. By the pigeonhole principle, there exist distinct \(i',j' \in [4]\) and \(i_0 \in [3]\) such that \(z_{i'}\) and \(z_{j'}\) lie on \(P_{i_0}\). Without loss of generality, assume \(z_{i'}\) precedes \(z_{j'}\) on \(P_{i_0}\), i.e., \(|P_{i_0}[s_1,z_{i'}]| < |P_{i_0}[s_1,z_{j'}]|\). Symmetrically, there exist distinct \(k',l' \in [4]\) and \(j_0 \in [3]\) such that \(w_{k'}\) and \(w_{l'}\) lie on \(P_{j_0}\); assume \(w_{k'}\) precedes \(w_{l'}\) on \(P_{j_0}\). Note that $i_0$ and $j_0$ are not necessarily distinct. We define four critical subpaths for subsequent path adjustments: \(Q_{i'}^* = Q_{i'}[s_2,z_{i'}]\), \(Q_{j'}^* = Q_{j'}[s_2,z_{j'}]\), \(Q_{k'}^* = Q_{k'}[w_{k'},t_2]\), and \(Q_{l'}^* = Q_{l'}[w_{l'},t_2]\). Let \(b\) be the predecessor of \(t_1\) on \(P_{i_0}\), and let \(a\) be the predecessor of \(b\) on \(P_{i_0}\); let \(c\) be the successor of \(s_1\) on \(P_{j_0}\), and let \(d\) be the successor of \(c\) on \(P_{j_0}\) (see Figure~\ref{case4-1}).

   \begin{figure}[htbp] 
    \centering 
    \includegraphics[width=0.45\textwidth]{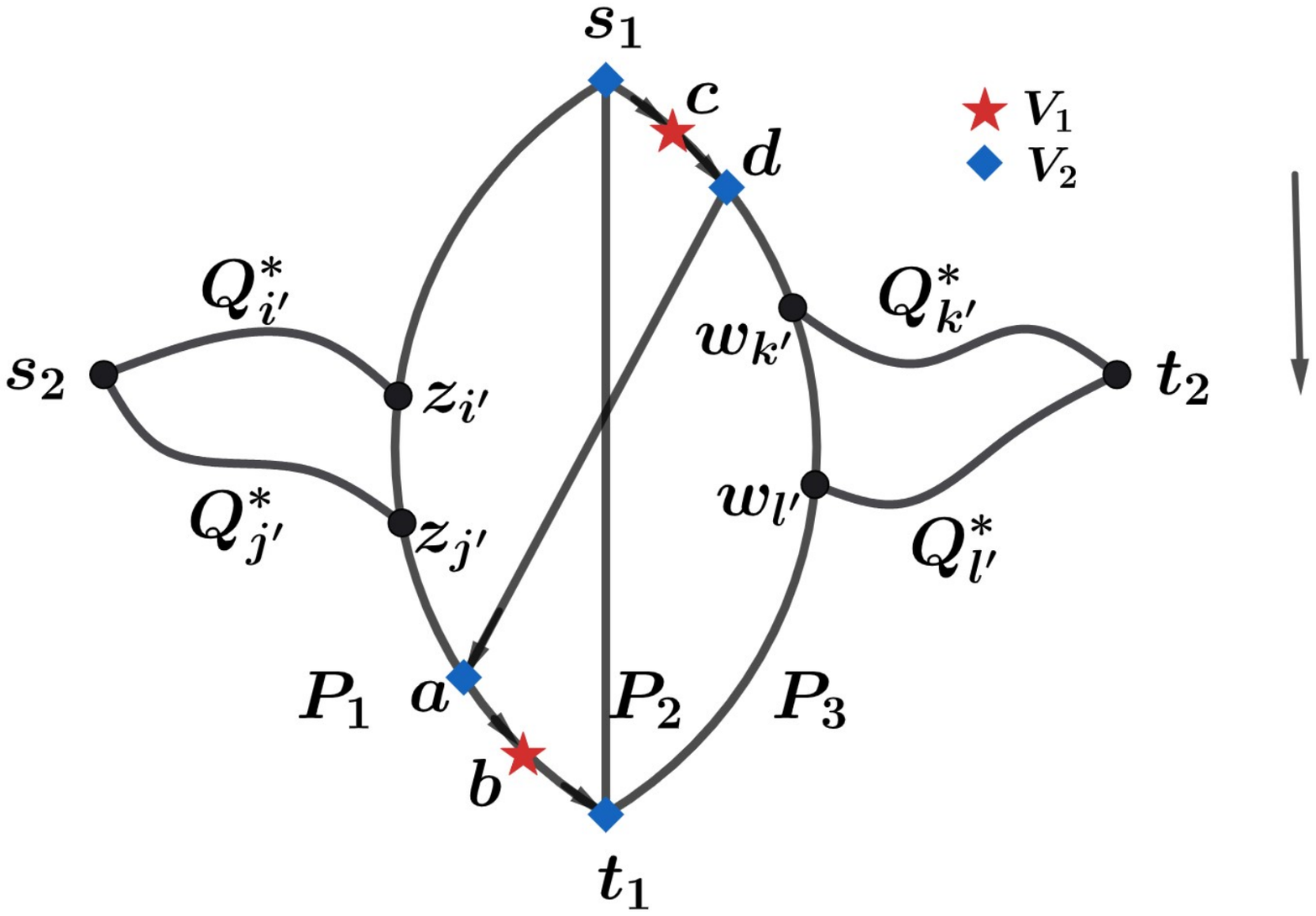}
  \caption{Example with \(i_0 = 1\) and \(j_0 = 3\). Red solid stars represent vertices in \(V_1\), blue diamonds represent vertices in \(V_2\), and black vertices are not fixed to be in either $V_1$ or $V_2$. The paths \(P_1,P_2,P_3\) are directed from top to bottom. This coloring rule is used for all following figures (without further mentioning).}
    \label{case4-1} 
\end{figure}

To obtain a new $(s_1,t_1)$-path, we need to prove that $da \in A(D)$,  as required by Claim~\ref{case4:claim1}. To this end, we first establish Claim~\ref{claim3.1}.
\begin{claim}\label{claim3.1}
    $c$ is non-adjacent to $\{a,b\}$ and $b$ is non-adjacent to $d$.
\end{claim}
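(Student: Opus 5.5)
Since $D$ is a split digraph, any two non-adjacent vertices cannot both lie in $V_2$. So the task is really to exclude each of the arcs $ca$, $ac$, $cb$, $bc$, $bd$, $db$. For each possible arc I will build a short $(s_1,t_1)$-path $P$ that avoids the four critical subpaths, and then join those subpaths into an $(s_2,t_2)$-path disjoint from $P$. That contradicts (\ref{assume}).

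The shortcut candidates are as follows. An arc $ca$ gives $P=s_1cabt_1$. An arc $cb$ gives $P=s_1cbt_1$. An arc $db$ gives $P=s_1cdbt_1$. Each of these uses only $s_1,c,d$ from the start of $P_{j_0}$ and $a,b,t_1$ from the end of $P_{i_0}$, and in all cases $V(P)\subseteq\{s_1,c,d,a,b,t_1\}$. For the reverse arcs I use minimality of the paths. If $i_0=j_0$, an arc $ac$ or $bc$ (or $bd$) contradicts minimality of $P_{i_0}$ once the relative order on the path is fixed: $c,d$ come early and $a,b$ come late, and if they coincide the statement is vacuous or the path is very short. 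If $i_0\neq j_0$, I handle $ac$, $bc$, $bd$ in the same way as the forward arcs. For example, with $bc$ I use a path going $s_1\to\cdots$ on $P_{i_0}$ up to $b$, then to $c$, and then along $P_{j_0}$. Since this does not obviously shorten anything, I would first check whether the intended argument needs only the forward arcs $ca$, $cb$, $db$ in order to get Claim~\ref{case4:claim1} ($da\in A$). Then the plan is to show that those forward shortcuts are impossible, and to rule out the backward arcs by minimality when $i_0=j_0$ and by a symmetric rerouting when $i_0\ne j_0$.

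For each short path $P$ I construct the $(s_2,t_2)$-path as follows. Start with $Q_{i'}^*$ to reach $z_{i'}$ on $P_{i_0}$, follow $P_{i_0}$ from $z_{i'}$ up to the vertex just before it meets $V(P)$, and then go on to $P_{j_0}$ and use $Q_{k'}^*$ or $Q_{l'}^*$ to end at $t_2$. More precisely, I would use the ordering: $z_{i'}$ precedes $z_{j'}$ on $P_{i_0}$, and $w_{k'}$ precedes $w_{l'}$ on $P_{j_0}$. Then $Q_{j'}^*$ reaches $P_{i_0}$ at $z_{j'}\neq s_1$, and since $P$ uses only $a,b$ near the end of $P_{i_0}$, the vertex $z_{j'}$ lies off $P$ unless $z_{j'}\in\{a,b\}$. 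Similarly, $Q_{k'}^*$ leaves $P_{j_0}$ at $w_{k'}$, which lies off $P$ unless $w_{k'}\in\{c,d\}$. Because $P$ meets the interiors of only $P_{i_0}$ and $P_{j_0}$, the third path $P_m$ is untouched, and I would route the connection through $P_m$ or through the middle portions of $P_{i_0},P_{j_0}$. This uses the fact that every $Q_i$ meets every $P_j$, and that first and last intersection points give subpaths internally avoiding $\bigcup V(P_j)$, hence avoiding $P$.

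The main obstacle is the degenerate positions: $z_{j'}\in\{a,b\}$ or $w_{k'}\in\{c,d\}$, and overlaps of $Q_{i'}^*$ with the other critical subpaths when indices coincide, for example $i'=k'$. I would handle these by using the choice of the earlier of $z_{i'},z_{j'}$ and the later of $w_{k'},w_{l'}$: at most one of the two entry points can be $a$ or $b$, so the earlier one, $z_{i'}$, lies strictly before $a$. Dually, $w_{l'}$ lies strictly after $d$. So the path $Q_{i'}^*\circ P_{i_0}[z_{i'},\cdot]\cdots$ must be chosen using $z_{i'}$ and $w_{l'}$. I expect most of the effort to go into checking that these two endpoints, plus a connecting segment through the non-$P$ parts of $P_1,P_2,P_3$ and the $Q$'s, give a genuine path disjoint from $P$.
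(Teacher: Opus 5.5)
Your proposal has a genuine gap. It assigns the two arc directions to the wrong mechanisms, and the step you flag as the main obstacle cannot be carried out from the hypotheses.

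For the arcs $c\to a$, $c\to b$, $d\to b$ you correctly obtain the $(s_1,t_1)$-paths $s_1cabt_1$, $s_1cbt_1$, $s_1cdbt_1$. You then try to assemble an $(s_2,t_2)$-path avoiding them out of $Q_{i'}^*$, $Q_{k'}^*$ or $Q_{l'}^*$ plus a ``connecting segment'' through $P_m$ or the middle parts of $P_{i_0},P_{j_0}$. Nothing guarantees the arcs such a connection would need. The missing idea is a pigeonhole count. Each of these paths lies in $D-\{s_2,t_2\}$ and has at most three internal vertices, while $Q_1,\dots,Q_4$ are internally disjoint and avoid $s_1,t_1$. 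Hence at most three of the $Q_i$ meet the short path, and the fourth is disjoint from it, contradicting that $(D,s_1,t_1,s_2,t_2)$ is not good. This is exactly where the hypothesis of four $(s_2,t_2)$-paths is used. Also, no degenerate ``very short path'' case arises. Each $P_j$ meets all four internally disjoint $Q_i$ in internal vertices, so $c,d,a,b$ are always four distinct internal vertices.

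For $a\to c$, $b\to c$, $b\to d$ your plan fails in both subcases. If $i_0=j_0$, these arcs go from a later to an earlier vertex of $P_{i_0}$. Minimality only forbids forward chords, so it gives no contradiction; it is the forward arcs $ca,cb,db$ that would be chords. If $i_0\neq j_0$, the long $(s_1,t_1)$-path through $b\to c$ leads nowhere, as you suspect. The right move is to use the backward arc as a bridge for an $(s_2,t_2)$-walk instead:
\begin{itemize}
\item for $f\in\{a,b\}$ with $f\to c$, take $Q_{i'}^*\circ P_{i_0}[z_{i'},f]\circ fc\circ P_{j_0}[c,w_{k'}]\circ Q_{k'}^*$;
\item for $b\to d$, take $Q_{i'}^*\circ P_{i_0}[z_{i'},b]\circ bd\circ P_{j_0}[d,w_{l'}]\circ Q_{l'}^*$.
\end{itemize}
The $Q$-segments meet $\bigcup_j V(P_j)$ only at $z_{i'}$ and at $w_{k'}$ (resp.\ $w_{l'}$). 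Apart from them, these walks use only internal vertices of $P_{i_0}$ and $P_{j_0}$. Each walk contains an $(s_2,t_2)$-path, which is disjoint from $P_{k_0}$ for any $k_0\in[3]\setminus\{i_0,j_0\}$. This works uniformly whether or not $i_0=j_0$.

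The choice of the earlier entry $z_{i'}$ and the later exit $w_{l'}$ is needed only to ensure that $z_{i'}$ lies at or before $a$ and $w_{l'}$ at or after $d$. Your assertion that $z_{i'}$ lies strictly before $a$ is false in general ($z_{i'}=a$, $z_{j'}=b$ is possible), but the weaker statement suffices.
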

\begin{proof}
    Suppose that \( c \) is adjacent to either \( a \) or \( b \). According to the adjacency between $c$ and 
    $\{a,b\}$, we consider the two cases separately.

Case 1: \(a \to c\) or \(b \to c\). Let \(f = a\) if \(a \to c\), and \(f = b\) if \(b \to c\). In either case, \(f \to c\). The path
\[
Q = Q_{i'}^* \circ P_{i_0}[z_{i'},f] \circ \{fc\} \circ P_{j_0}[c,w_{k'}] \circ Q_{k'}^*
\]
is an \((s_2,t_2)\)-path that is disjoint from \(P_i\) for all \(i \in [3] \setminus \{i_0,j_0\}\), contradicting \eqref{assume}.

Case 2: \(c \to a\) or \(c \to b\). Then either $s_1cabt_1$ or $s_1cbt_1$ is a path and as it has at most 3 internal vertices, it avoids at least one of the paths $Q_1,\ldots{},Q_4$, contradicting \eqref{assume}.\\

By a similar argument as in Case 1, $D$ cannot contain the arc $bd$ and since $D$ has no $(s_1,t_1)$-path of length at most 4 avoiding $s_2,t_2$ we see that $D$ cannot have an arc from $d$ to $b$. Thus the claim is proved.

\end{proof}

\begin{claim}\label{case4:claim1}
\(\{b,c\} \subseteq V_1\) and \(\{a, d\}\subseteq V_2\). Moreover, \(da \in A(D)\).
\end{claim}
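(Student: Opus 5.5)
The plan is to read off the partition membership directly from Claim~\ref{claim3.1}, using the split structure: two non-adjacent vertices cannot both lie in \(V_2\), because \(D\langle V_2\rangle\) is semicomplete. At the same time, consecutive vertices on a path are adjacent, so they cannot both lie in the independent set \(V_1\).

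\textbf{Membership of \(b\) and \(c\).} By Claim~\ref{claim3.1}, \(c\) is non-adjacent to \(a\) and to \(b\), and \(b\) is non-adjacent to \(d\).

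First suppose \(c\in V_2\). Then both \(a\) and \(b\) must lie in \(V_1\). But \(a\) and \(b\) are consecutive on \(P_{i_0}\), so they are adjacent, contradicting the independence of \(V_1\). Hence \(c\in V_1\).

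Symmetrically, suppose \(b\in V_2\). Then \(c\) and \(d\) must both lie in \(V_1\). But \(c\) and \(d\) are consecutive on \(P_{j_0}\), a contradiction. Hence \(b\in V_1\).

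Since \(ab\) and \(cd\) are arcs and \(V_1\) is independent, it follows that \(a,d\in V_2\). One should check that \(a,b,c,d\) are well defined and distinct from each other and from \(s_1,t_1\). This should follow from minimality of the paths together with the fact that there is no short \((s_1,t_1)\)-path avoiding \(s_2,t_2\): any \((s_1,t_1)\)-path with at most three internal vertices misses some \(Q_i\), contradicting \eqref{assume}. For instance, if \(P_{i_0}\) had length at most \(2\), it would have at most one internal vertex. The same fact gives \(a\neq d\) when \(i_0=j_0\).

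\textbf{The arc \(da\).} Because \(a,d\in V_2\) and \(D\langle V_2\rangle\) is semicomplete, \(a\) and \(d\) are adjacent. It remains to exclude \(ad\). If \(a\to d\), I would imitate Case~1 of Claim~\ref{claim3.1} and form the \((s_2,t_2)\)-path
\[
Q_{i'}^*\circ P_{i_0}[z_{i'},a]\circ\{ad\}\circ P_{j_0}[d,w_{k'}]\circ Q_{k'}^*.
\]
This path avoids every \(P_i\) with \(i\in[3]\setminus\{i_0,j_0\}\), contradicting \eqref{assume} provided such an index exists.

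\textbf{Main obstacle.} The delicate step is legitimizing this rerouting. One must ensure that \(z_{i'}\) lies weakly before \(a\) on \(P_{i_0}\), and that \(w_{k'}\) lies weakly after \(d\) on \(P_{j_0}\), so that both subpaths exist. One must also ensure that the concatenation is actually a path.
\begin{itemize}
\item If \(z_{i'}\) were \(b\), then \(Q_{i'}^*\) could be extended by \(bt_1\) or handled as in Claim~\ref{claim3.1}.
\item If \(w_{k'}=c\), the argument is symmetric.
\item If \(i_0=j_0\), then \(a\to d\) with \(d\) near the start of the path gives a shortcut \(s_1cdabt_1\)-type structure. Any \(s_1\)-to-\(t_1\) path of length at most \(4\) through \(\{c,d,a,b\}\) avoids some \(Q_i\), which contradicts \eqref{assume}.
\end{itemize}
I expect this case bookkeeping on the positions of \(z_{i'}\) and \(w_{k'}\) relative to \(a\) and \(d\) to be the main work.
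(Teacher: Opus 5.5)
Your derivation of $\{b,c\}\subseteq V_1$ and $\{a,d\}\subseteq V_2$ matches the paper's argument. Your idea for excluding $a\to d$ is also the paper's: reroute an $(s_2,t_2)$-path through the arc $ad$ so that it avoids $P_{k_0}$ for some $k_0\in[3]\setminus\{i_0,j_0\}$. Such a $k_0$ always exists, so your proviso is unnecessary.

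\textbf{The gap: your choice of endpoint.} You end the rerouted path with $P_{j_0}[d,w_{k'}]\circ Q_{k'}^*$. But $w_{k'}$ is the \emph{earlier} of the two last-intersection vertices on $P_{j_0}$, and nothing established so far rules out $w_{k'}=c$. In that case $P_{j_0}[d,w_{k'}]$ does not exist and your path is undefined. Your remedies do not fix this. Saying that case is ``symmetric'' only points to the case $z_{i'}=b$, and extending $Q_{i'}^*$ by $bt_1$ gives a walk ending at $t_1$, which is useless.

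\textbf{The fix.} The missing observation is already contained in your ordering conventions.
\begin{itemize}
\item Each $Q_i$ avoids $s_1,t_1$, so $z_{i'},z_{j'},w_{k'},w_{l'}$ are internal vertices of the $P_j$.
\item Since $z_{i'}$ strictly precedes $z_{j'}$ on $P_{i_0}$, we get $z_{i'}\neq b$, i.e.\ $z_{i'}\in P_{i_0}(s_1,a]$. So your worry about $z_{i'}=b$ is vacuous.
\item Since $w_{l'}$ strictly succeeds $w_{k'}$ on $P_{j_0}$, we get $w_{l'}\neq c$, i.e.\ $w_{l'}\in P_{j_0}[d,t_1)$.
\end{itemize}
This is why the paper uses $Q_{i'}^*\circ P_{i_0}[z_{i'},a]\circ ad\circ P_{j_0}[d,w_{l'}]\circ Q_{l'}^*$. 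Both segments always exist, and no case bookkeeping is needed.

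\textbf{Your bullet on $i_0=j_0$ is also wrong.} Under $a\to d$, $s_1cdabt_1$ is not a path, since it needs the arc $da$. Even if it were a path, it has four internal vertices, so it need not miss any of the four $Q_i$. The correct treatment is as follows. When $i_0=j_0$, the concatenation above is an $(s_2,t_2)$-walk whose vertices lie in $V(Q_{i'}^*)\cup V(P_{i_0})\cup V(Q_{l'}^*)$. Recall that $Q_{i'}^*$ meets $\bigcup_j V(P_j)$ only in $z_{i'}$, and $Q_{l'}^*$ only in $w_{l'}$. So the walk contains an $(s_2,t_2)$-path disjoint from $P_{k_0}$, which gives the same contradiction to \eqref{assume}. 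With $w_{l'}$ in place of $w_{k'}$, your proof becomes the paper's.
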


\begin{proof}
Suppose that \( c \in V_2 \). Then \( c \) is adjacent to either \( a \) or \( b \), {which contradicts Claim \ref{claim3.1}. Thus, \( c \in V_1 \). Symmetrically, we can verify that \( b \in V_1 \). Consequently, it follows directly from \(a\to b\) and \(c\to d\) that \(a, d \in V_2\). To verify \(d\to a\), we assume $a\to d$, so \(Q_{i'}^* \circ P_{i_0}[z_{i'},a] \circ \{ad\} \circ P_{j_0}[d,w_{l'}] \circ Q_{l'}^*\) is an $(s_2,y_2)$-path, which is disjoint with \(P_i\) for \(i \in \intthree \setminus \{i_0,j_0\}\),  contradicting (\ref{assume}).}
\end{proof}

 By the claims above, $D$ contains the $(s_1,t_1)$-path 
 $P = s_1cdabt_1$. As $Q_1,\ldots{},Q_4$ are internally disjoint it follows from (\ref{assume}), each of them contain exactly one vertex from $\{a,b,c,d\}$ and only one of these vertices is on some $Q_i$. 
 Without loss of generality, we assume that \(a\in V(Q_1)\), \(b\in V(Q_2)\), \(c\in V(Q_3)\), and \(d\in V(Q_4)\).

Let $k_0$ be an index in $\intthree \setminus \{i_0, j_0\}$. We first argue that each of the four subpaths $Q_1(a,t_2]$, $Q_2(b,t_2]$, $Q_3[s_2,c)$, and $Q_4[s_2,d)$ intersect $P_{k_0}$ in at least one vertex. Suppose that \(V( Q_1(a, t_2])\cap  V(P_{k_0}) =\emptyset\). Then the path \( Q = Q_{i'}^* \circ P_{i_0}[z_{i'}, a] \circ Q_1[a, t_2] \) is disjoint from $P_{k_0}$, contradicting (\ref{assume}) (See Figure \ref{case4-figure2-(a)}).  Analogously, \(V( Q_3[s_2, c) )\cap V( P_{k_0})\neq \emptyset \), since otherwise, the path \( Q' = Q_3[s_2, c] \circ P_{j_0}[c, w_{k'}] \circ Q_{k'}^* \), is disjoint from \( P_{k_0} \), contradicting (\ref{assume}) again (See Figure \ref{case4-figure2-(b)}). The remaining two cases follow analogously.

\begin{figure}[htbp]  
    \centering
    \begin{subfigure}[b]{0.455\linewidth}
        \centering
        \includegraphics[width=\linewidth]{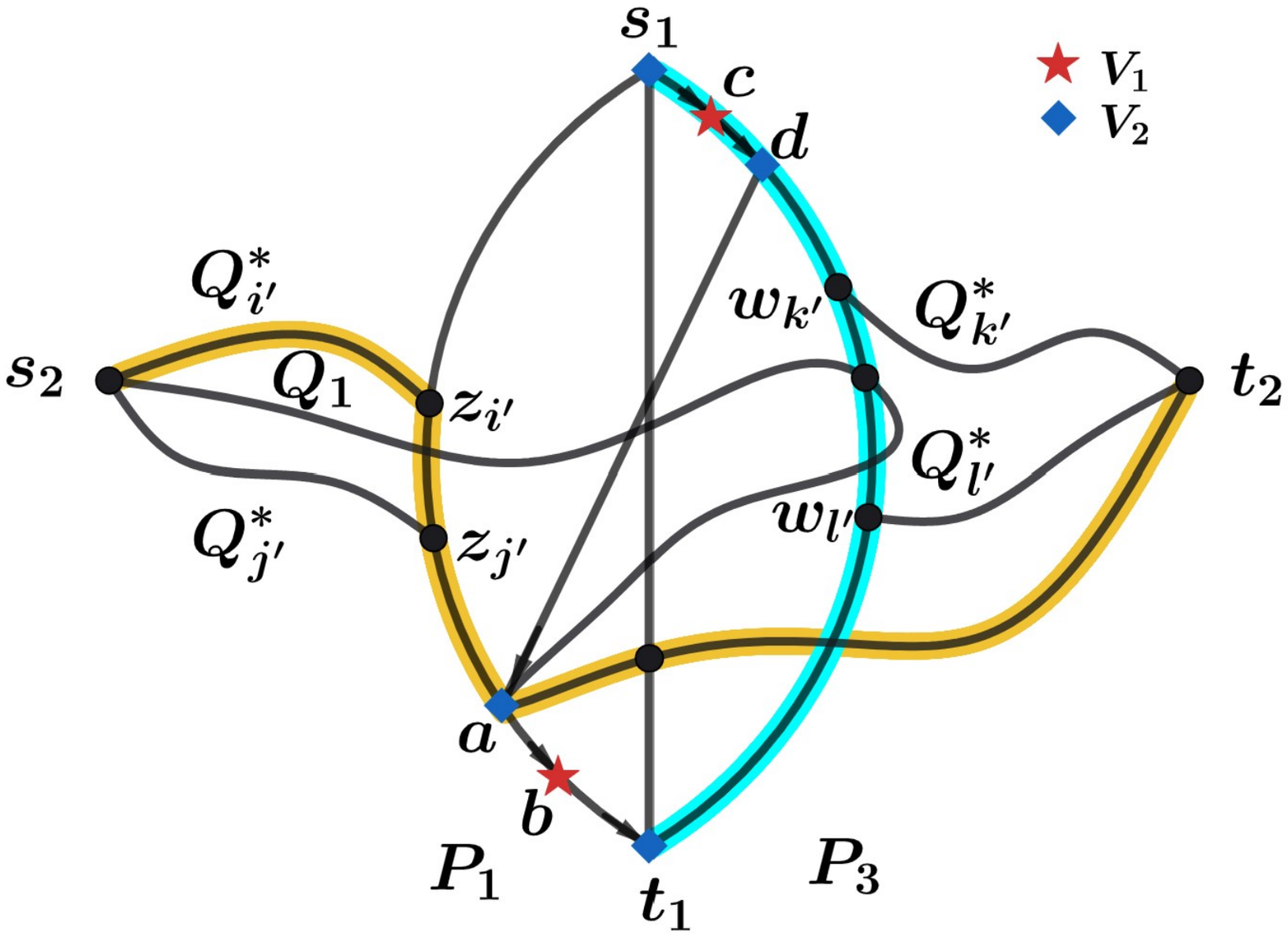}  
        \caption{}
        \label{case4-figure2-(a)}
    \end{subfigure}
    \hfill  
    \begin{subfigure}[b]{0.455\linewidth}
        \centering
        \includegraphics[width=\linewidth]{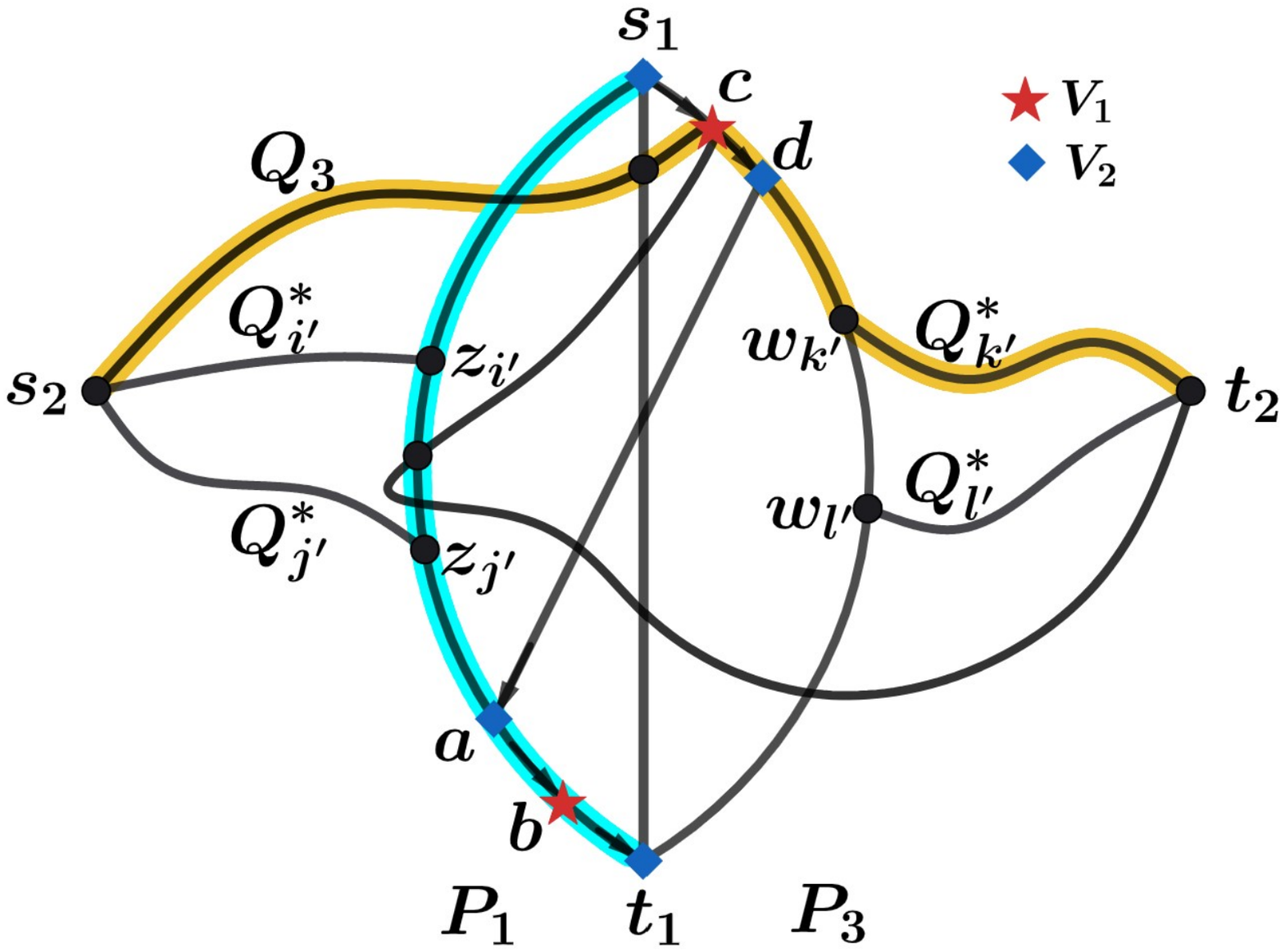}
        \caption{}
        \label{case4-figure2-(b)}
    \end{subfigure}
    \hfill 
 \caption{Example with $i_0=1$ and $j_0=3$. (a) illustrates the case when $Q_1[a,t_2]$ and $P_3$ are disjoint; (b) illustrates the case when $Q_3[s_2,c]$ and $P_1$ are disjoint. }
    \label{case4-2} 
    
\end{figure}

Choose four vertices $x_1, x_2, x_3, x_4$ satisfying the following properties.
\begin{itemize}
    \item  \( x_1 \in V(Q_1(a, t_2])\cap V( P_{k_0})\)  such that \( |P_{k_0}[x_1,t_1]| \) is minimized;
    \item \( x_2 \in V(Q_2(b, t_2])\cap V( P_{k_0}) \) such that  \( |P_{k_0}[x_2,t_1]| \) is minimized;
    \item  \( x_3 \in V(Q_3[s_2, c))\cap  V(P_{k_0})\) such that  \( |P_{k_0}[s_1,x_3]| \) is minimized;  
    \item  \( x_4\in  V(Q_4[s_2, d))\cap  V(P_{k_0}) \) such that \( |P_{k_0}[s_1,x_4]| \) is minimized.
\end{itemize}
Furthermore, we can determine the relative order of the vertices $x_1, x_2, x_3, x_4$ along $P_{k_0}$. 
\begin{claim}\label{case4:claim4}
$\max\{|P_{k_0}[s_1,x_1]|, |P_{k_0}[s_1,x_2]|\}< \min\{|P_{k_0}[s_1,x_3]|, |P_{k_0}[s_1,x_4]|\}$.
\end{claim}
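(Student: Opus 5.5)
The plan is to argue by contradiction, using the short $(s_1,t_1)$-path $P=s_1cdabt_1$ obtained from Claims~\ref{claim3.1} and~\ref{case4:claim1}. Suppose the inequality fails. Then there exist $p\in\{1,2\}$ and $q\in\{3,4\}$ such that $x_p$ does not precede $x_q$ on $P_{k_0}$, i.e.\ $|P_{k_0}[s_1,x_q]|\le |P_{k_0}[s_1,x_p]|$. I will use this to build an $(s_2,t_2)$-path disjoint from $P$, which contradicts (\ref{assume}).

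First I check that $x_p\neq x_q$, so the inequality would in fact be strict. Since $P_{k_0}$ lies in $D-\{s_2,t_2\}$, both $x_p$ and $x_q$ are internal vertices of $Q_p$ and $Q_q$ respectively. Because $Q_p$ and $Q_q$ are internally disjoint and $p\neq q$, these vertices are distinct. Hence $x_q$ strictly precedes $x_p$ on $P_{k_0}$.

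Next, I consider the walk
\[
W = Q_q[s_2,x_q]\circ P_{k_0}[x_q,x_p]\circ Q_p[x_p,t_2],
\]
which goes from $s_2$ to $t_2$, and extract an $(s_2,t_2)$-path $Q\subseteq W$ from it. It remains to verify that $W$ avoids $V(P)=\{s_1,c,d,a,b,t_1\}$, and I check the three pieces in turn.
\begin{itemize}
\item Each of $Q_1,\dots,Q_4$ lies in $D-\{s_1,t_1\}$ and meets $\{a,b,c,d\}$ in exactly one vertex (namely $a,b,c,d$ respectively). Hence $Q_3[s_2,c)$, $Q_4[s_2,d)$, $Q_1(a,t_2]$ and $Q_2(b,t_2]$ all avoid $V(P)$. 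By the choice of the $x$'s, $Q_q[s_2,x_q]$ is contained in the first kind of segment and $Q_p[x_p,t_2]$ in the second.
\item For the middle piece, $P_{k_0}[x_q,x_p]$ consists of internal vertices of $P_{k_0}$, since $x_q,x_p\notin\{s_1,t_1\}$. The vertices $a,b$ are internal vertices of $P_{i_0}$ and $c,d$ are internal vertices of $P_{j_0}$. Moreover $k_0\notin\{i_0,j_0\}$ and the $P_j$ are internally disjoint, so this piece avoids $V(P)$ as well.
\end{itemize}

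The step I expect to need the most care is confirming that $a,b,c,d$ really are internal vertices, i.e.\ that $a\neq s_1$ and $d\neq t_1$. This follows from $a\in V_2$, $s_1\to c\in V_1$, and the minimality of the paths. Granting this, $Q$ and $P$ are disjoint $(s_2,t_2)$- and $(s_1,t_1)$-paths, so $(D,s_1,t_1,s_2,t_2)$ is good, contradicting (\ref{assume}). Therefore every $x_p$ with $p\in\{1,2\}$ strictly precedes every $x_q$ with $q\in\{3,4\}$, which is exactly the claimed inequality.
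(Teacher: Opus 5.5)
Your proof is correct and follows essentially the paper's argument: if some $x_q$ with $q\in\{3,4\}$ does not come after some $x_p$ with $p\in\{1,2\}$ on $P_{k_0}$, then $Q_q[s_2,x_q]\circ P_{k_0}[x_q,x_p]\circ Q_p[x_p,t_2]$ contains an $(s_2,t_2)$-path avoiding $P=s_1cdabt_1$, and your checks that $x_p\neq x_q$ and that one may pass from this walk to a path only spell out what the paper leaves implicit. The step you flag as delicate is not actually needed, since no internal vertex of $P_{k_0}$ can lie on $P_{i_0}\cup P_{j_0}$ at all and $P$ was already known to be a path; if you want it anyway, the clean reason is that (\ref{assume}) forces every $(s_1,t_1)$-path in $D-\{s_2,t_2\}$ to have at least four internal vertices, not the partition and minimality argument you sketch.
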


\begin{proof}
Without loss of generality, we may assume that $|P_{k_0}[s_1,x_1]|=\max\{|P_{k_0}[s_1,x_1]|, |P_{k_0}[s_1,x_2]|\}$ and $|P_{k_0}[s_1,x_3]|=\min\{|P_{k_0}[s_1,x_3]|, |P_{k_0}[s_1,x_4]|\}$. If $|P_{k_0}[s_1,x_1]|> |P_{k_0}[s_1,x_3]|$, we then construct the \((s_2,t_2)\)-path 
\(Q = Q_3[s_2, x_3] \circ P_{k_0}[x_3, x_1] \circ Q_i[x_1, t_2]\).
It follows from the definition of the four subpaths above that \(Q\) is disjoint from the \((s_1,t_1)\)-path \(P = s_1cdabt_1\), contradicting (\ref{assume}). Consequently, $\max\{|P_{k_0}[s_1,x_1]|, |P_{k_0}[s_1,x_2]|\}< \min\{|P_{k_0}[s_1,x_3]|, |P_{k_0}[s_1,x_4]|\}$.
\end{proof}

Based on Claim \ref{case4:claim4}, we may assume without loss of generality that $|P_{k_0}[s_1,x_1]|<|P_{k_0}[s_1,x_2]|<|P_{k_0}[s_1,x_3]|<|P_{k_0}[s_1,x_4]|$ (see Figure \ref{case4-3.2}). Clearly, we have $|P_{k_0}| \geq 6$. Furthermore, the subsequent Claim \ref{case4:claim5} confirms that $|P_{k_0}|=6$ and determines the distribution of its internal vertices.

\begin{claim}\label{case4:claim5}
    \(x_{1}, x_{4}\in V_1\) and \(x_{2}, x_{3}\in V_2\). Furthermore, \(P_{k_0} = s_1x_{1}x_{2}x_{3}x_{4}t_1\).
\end{claim}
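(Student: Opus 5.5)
The plan is to combine three facts. First, $P_{k_0}$ is a minimal $(s_1,t_1)$-path in $D-\{s_2,t_2\}$, so it has no forward chord between non-consecutive vertices. Second, assumption \eqref{assume} forbids any $(s_2,t_2)$-path avoiding the fixed path $P=s_1cdabt_1$, and any $(s_1,t_1)$-path avoiding one of $Q_1,\dots,Q_4$. Third, $D\langle V_2\rangle$ is semicomplete, so any two $V_2$-vertices on $P_{k_0}$ are joined by an arc, and we get to examine its direction. Throughout I use the order $x_1,x_2,x_3,x_4$ along $P_{k_0}$ fixed after Claim~\ref{case4:claim4}.

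\emph{Step 1: no backward arcs from $\{x_3,x_4\}$ to $\{x_1,x_2\}$.} Suppose $x_j\to x_i$ with $j\in\{3,4\}$ and $i\in\{1,2\}$. Then $Q_j[s_2,x_j]\circ x_jx_i\circ Q_i[x_i,t_2]$ is an $(s_2,t_2)$-path. By the choice of the four subpaths (each $Q_i$ meets $\{a,b,c,d\}$ only in its designated vertex, which lies outside these segments), it is disjoint from $P=s_1cdabt_1$. This contradicts \eqref{assume}, exactly as in the proof of Claim~\ref{case4:claim4}. Combined with minimality of $P_{k_0}$, this gives: two of the $x_i$ that are both in $V_2$ must be consecutive on $P_{k_0}$. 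Indeed, non-consecutive ones are joined by an arc that is either a forward chord or one of the forbidden backward arcs. The same argument applies to pairs consisting of one $x_i$ and an intermediate vertex of $P_{k_0}$ lying in $V_2$. Here I use that $Q_i[x_i,t_2]$, respectively $Q_j[s_2,x_j]$, can be extended through that vertex. Alternatively I get a short $(s_1,t_1)$-path through $s_1$ and $P_{k_0}$.

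\emph{Step 2: locate $V_1$ and $V_2$.} Since $V_1$ is independent, consecutive vertices of $P_{k_0}$ cannot both lie in $V_1$. So any stretch of $P_{k_0}$ contains $V_2$-vertices roughly every other position. On the other hand, Step 1 says the $V_2$-vertices among $x_1,\dots,x_4$ (and more generally on the segment between $x_1$ and $x_4$) must form a set with no non-consecutive pair. So at most two of them lie in $V_2$, and these are adjacent on $P_{k_0}$. Independence of $V_1$ then forces the pattern $x_1\in V_1$, $x_2,x_3\in V_2$, $x_4\in V_1$, with $x_1x_2$, $x_2x_3$ and $x_3x_4$ arcs of $P_{k_0}$. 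Any extra vertex between $x_1$ and $x_2$ or between $x_3$ and $x_4$ would be a $V_2$-vertex or would create two consecutive $V_1$-vertices, so no such vertex exists.

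\emph{Step 3: the ends, $s_1x_1$ and $x_4t_1$ are arcs.} This is the step I expect to be the main obstacle. A vertex $y$ of $P_{k_0}$ strictly between $s_1$ and $x_1$ is not on any $Q_i$ segment used so far, so Step 1 does not directly apply to it. My first attempt is as follows. Such a $y$ next to $x_1\in V_1$ must lie in $V_2$, so $y$ is adjacent to $x_2$ (with $y\to x_2$ excluded by minimality) and to $a$ and $d$. I then examine each direction:
\begin{itemize}
\item If $x_2\to y$, the path $Q_2[s_2,x_2]$ is not available, but $Q_3[s_2,x_3]$ is. I expect that $x_3\to y$ or $y\to x_3$ gives either a chord or an $(s_2,t_2)$-path avoiding $P$.
\item The arcs between $y$ and $a$ or $d$ should be handled via the $(s_1,t_1)$-paths $P_{k_0}[s_1,y]\circ ya\,b\,t_1$ or $s_1cd\circ dy\circ P_{k_0}[y,t_1]$. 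These have few vertices outside $P_{k_0}$, so they should miss some $Q_i$, contradicting \eqref{assume}.
\end{itemize}
The symmetric argument handles a vertex between $x_4$ and $t_1$. Once these cases are closed, we get $P_{k_0}=s_1x_1x_2x_3x_4t_1$, which completes the claim.
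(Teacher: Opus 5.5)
Your Steps 1--2 follow the paper's idea: reroute $Q_3\to P_{k_0}\to Q_2$ to forbid backward arcs, then combine minimality of $P_{k_0}$, semicompleteness of $D\langle V_2\rangle$ and independence of $V_1$. Step 3, however, is not a proof. It records expectations, and it starts from a false premise. You say Step 1 does not apply to a vertex $y$ of $P_{k_0}$ strictly between $s_1$ and $x_1$ (or between $x_4$ and $t_1$), but it does. The rerouting works for \emph{every} $u\in V(P_{k_0}[x_3,t_1))$ and $v\in V(P_{k_0}(s_1,x_2])$: if $uv\in A(D)$, then $Q_3[s_2,x_3]\circ P_{k_0}[x_3,u]\circ uv\circ P_{k_0}[v,x_2]\circ Q_2[x_2,t_2]$ is an $(s_2,t_2)$-walk avoiding $P=s_1cdabt_1$, and hence contains such a path. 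Nothing requires $v$ to lie after $x_1$ or $u$ before $x_4$. Your second bullet would also not work as stated. The path $P_{k_0}[s_1,y]\circ yabt_1$ meets $Q_1$ and $Q_2$ at $a$ and $b$. Nothing established so far keeps $Q_3(c,t_2]$ or $Q_4(d,t_2]$ off $P_{k_0}[s_1,y]$, because the choice of $x_3,x_4$ only controls $Q_3[s_2,c)$ and $Q_4[s_2,d)$.

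With the strengthened observation (the paper's \eqref{equation1}), the ends need no separate treatment. Let $L=P_{k_0}(s_1,x_2]$ and $R=P_{k_0}[x_3,t_1)$. Each has at least two consecutive vertices, so by independence of $V_1$ each contains a $V_2$-vertex. Any $V_2$-vertex of $L$ and any $V_2$-vertex of $R$ are adjacent. The arc cannot go from $R$ to $L$, and a forward arc must be an arc of $P_{k_0}$ by minimality. Hence $L\cap V_2=\{x_2\}$, $R\cap V_2=\{x_3\}$ and $x_2x_3\in A(P_{k_0})$. Independence of $V_1$ then forces $L=x_1x_2$ and $R=x_3x_4$ with $x_1,x_4\in V_1$, which gives the whole claim at once. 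This also repairs an overstatement in your Step 2. The rerouting only controls pairs with one vertex in $L$ and one in $R$, not same-side pairs such as $x_1,x_2$: an arc $x_2\to x_1$ is not excluded, since $Q_2[s_2,x_2]$ passes through $b$. So ``no non-consecutive pair'' among all $V_2$-vertices of the segment is not established, but the cross-pair version is all that is needed. Your first bullet could be completed the same way. If $y\in V_2$ is the predecessor of $x_1$, then $y\to x_3$ is a chord, and $x_3\to y$ yields the walk $Q_3[s_2,x_3]\circ x_3y\circ P_{k_0}[y,x_1]\circ Q_1[x_1,t_2]$ avoiding $P$. That is just the cross-pair observation again.
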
 

\begin{proof}
Our argument relies on the following key observation,
\begin{equation}\label{equation1}
    \text{there is no arc from } P_{k_0}[x_3,t_1) \text{ to } P_{k_0}(s_1,x_2].
\end{equation}
Suppose for a contradiction that such an arc \(uv\) exists, where \(u\in V(P_{k_0}[x_3,t_1))\) and \(v\in V(P_{k_0}(s_1,x_2])\). Define the path
\[
Q = Q_3[s_2, x_3] \circ P_{k_0}[x_3,u] \circ uv \circ P_{k_0}[v,x_2] \circ Q_2[x_2, t_2].
\]
Then \(Q\) is an \((s_2,t_2)\)-path that is disjoint from the \((s_1,t_1)\)-path \(P = s_1cdabt_1\), which contradicts (\ref{assume}).

By the definition of split digraphs, there is no arc within \(V_1\) and there is  at least one arc between every pair of distinct vertices in \(V_2\).  Combining this with \eqref{equation1}, the only possible configuration is that \(x_1, x_4 \in V_1\), \(x_2, x_3 \in V_2\), and \(P_{k_0} = s_1x_1x_2x_3x_4t_1\). This completes the proof.
\end{proof}

\begin{figure}[htbp] 
    \centering 
    \includegraphics[width=0.45\textwidth]{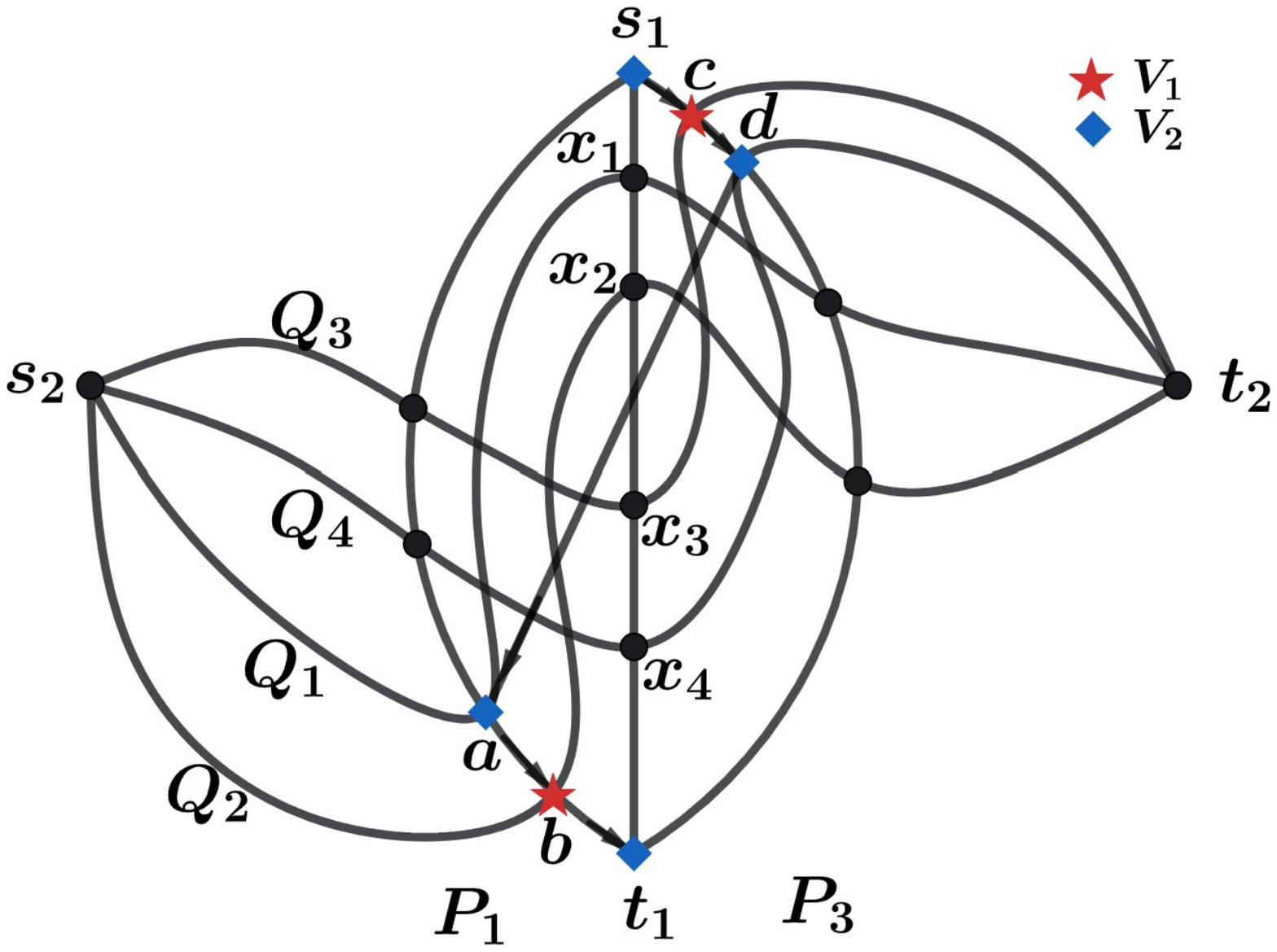}
    \caption{Example with \(i_0=1\), \(j_0=3\).}
    \label{case4-3.2} 
\end{figure}

Now we can complete the proof as follows (see Figure \ref{case4-3.2}). We first conclude from Claims~\ref{case4:claim1} and \ref{case4:claim5} that \(d, x_2 \in V_2\). Suppose \(d  \to x_2\), then the \((s_1,t_1)\)-path \(P_{j_0}[s_1,d] \circ dx_2 \circ P_{k_0}[x_2,t_1]\) is disjoint from the \((s_2,t_2)\)-path \(Q_1\), a contradiction to \eqref{assume}. It follows that \(x_2 \to d \). But then the \((s_1,t_1)\)-path \( P_{k_0}[s_1,x_2] \circ x_2da \circ P_{i_0}[a,t_1]\) is disjoint from \(Q_3\), which yields another contradiction. Therefore, the proof is complete. 

\qed

\section{Proof of Theorem \ref{semicomplete split digraph:theorem}}\label{section 4}

\noindent Before proving Theorem~\ref{semicomplete split digraph:theorem}, we first present a sketch of the proof.
We establish Theorem~\ref{semicomplete split digraph:theorem} by contradiction. Assume that the $(D, s_1, t_1, s_2, t_2)$ is not good. The proof is divided into three cases based on the distributions of the vertices $z_1,z_2,z_3$ and $w_1,w_2,w_3$ on the paths $P_1,P_2,P_3$. As in the proof above, one $(s_1,t_1)$-path $P_{k_0}$ plays a central role.

In \ref{Case 1}, we extract four subpaths and apply the definition of semicomplete split digraphs to fix the orientation of a key arc. A contradiction is then derived by concatenating these subpaths and several key arcs. This reasoning implies that \(P_{k_0}\) contains exactly five vertices for some \(k_0\in \intthree\), and we determine the distribution of its internal vertices over \(V_1\) and \(V_2\). Notably, any orientation of the critical adjacent arc contradicts the assumption. In \ref{Case 2}, we extract two subpaths to fix one arc orientation, then verify that $|V(P_1)|\geq4$. By concatenating subpaths, we construct disjoint $(s_1,t_1)$- and $(s_2,t_2)$-paths, contradicting the assumption.
In \ref{Case 3}, path minimality combined with the structure of semicomplete split digraphs implies $|V(P_i)|=5$ for all $i\in\intthree$, with fixed distributions of internal vertices over $V_1,V_2$. After characterizing the structure of $D$ (see Figure~\ref{semicomplete-split-digraph-Figure5}), we show that any orientation of the critical adjacent arc yields a contradiction.
All cases contradict the assumption, which completes the proof.

\vspace{2mm}

\noindent \textbf{Proof of Theorem \ref{semicomplete split digraph:theorem}.}

   {We proceed by contradiction and suppose that}
   \begin{equation}\label{assume-semicomplete-split}
  {(D, s_1, t_1, s_2, t_2) \text{ is not good}.}
   \end{equation}
  It follows immediately that
   \begin{equation}\label{length-semicomplete-split}
   D \text{ contains no } (s_1,t_1)\text{-path of length at most } 3.
   \end{equation} 
   By hypothesis, \(D - \{s_2, t_2\}\) contains three internally disjoint minimal \((s_1,t_1)\)-paths, denoted \(P_1, P_2, P_3\), and \(D - \{s_1,t_1\}\) contains three internally disjoint minimal \((s_2,t_2)\)-paths, denoted \(Q_1, Q_2, Q_3\).
  Assumption \eqref{assume-semicomplete-split} immediately implies that
  \begin{equation}\label{intersection-semicomplete-split}
   V(P_i) \cap V(Q_j) \neq \emptyset \quad \text{for all } i,j \in [3].
  \end{equation}
  Let \(S = \bigcup_{i=1}^{3}V(P_i) \setminus \{s_1,t_1\}\). For each \(i \in [3]\),  denote by \(z_i\) and \(w_i\) the first and last vertices in \(V(Q_i)\cap S\) along the path $Q_i$, respectively. Further, let \(s_1^1, s_1^2, s_1^3\) be the successors of \(s_1\) on \(P_1, P_2, P_3\), respectively, and \(t_1^1, t_1^2, t_1^3\) be the predecessors of \(t_1\) on \(P_1, P_2, P_3\), respectively.

\begin{claim}\label{Locally semicomplete split:claim 2}
   For each $i\in [3]$, if \(z_i \in V(P_j)\) and \(w_i \in V(P_k)\) for some \(j,k \in [3]\) (possibly \(j=k\)), then \(\{t_1^j, s_1^k\} \subseteq V_1\).
\end{claim}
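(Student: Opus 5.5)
The plan is to argue by contradiction: we assume that at least one of $t_1^j, s_1^k$ lies in $V_2$ and show that then $t_1^j$ and $s_1^k$ must be adjacent, while either orientation of the arc between them contradicts \eqref{length-semicomplete-split} or \eqref{assume-semicomplete-split}. The two assertions $t_1^j\in V_1$ and $s_1^k\in V_1$ are handled simultaneously.

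\emph{Step 1 (the two vertices are distinct and adjacent).} If $t_1^j=s_1^k$, then $j=k$ and $P_j=s_1t_1^jt_1$ is an $(s_1,t_1)$-path of length $2$, contradicting \eqref{length-semicomplete-split}. So they are distinct. If both lie in $V_2$, they are adjacent because $D\langle V_2\rangle$ is semicomplete. If exactly one lies in $V_2$, they are adjacent because in a semicomplete split digraph every vertex of $V_1$ is adjacent to every vertex of $V_2$.

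\emph{Step 2 (the arc $s_1^k t_1^j$ is impossible).} If $s_1^k\to t_1^j$, then $s_1s_1^kt_1^jt_1$ is an $(s_1,t_1)$-path of length $3$, again contradicting \eqref{length-semicomplete-split}.

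\emph{Step 3 (the arc $t_1^j s_1^k$ is impossible).} Suppose $t_1^j\to s_1^k$. We reroute $Q_i$ through this arc and aim for an $(s_2,t_2)$-path that avoids some $P_l$. The candidate is
\[
Q=Q_i[s_2,z_i]\circ P_j[z_i,t_1^j]\circ t_1^js_1^k\circ P_k[s_1^k,w_i]\circ Q_i[w_i,t_2].
\]
\begin{itemize}
\item By the choice of $z_i$ and $w_i$ as the first and last vertices of $Q_i$ in $S$, the end segments $Q_i[s_2,z_i)$ and $Q_i(w_i,t_2]$ meet $S$ nowhere. They also avoid $s_1,t_1$, since $Q_i\subseteq D-\{s_1,t_1\}$.
\item When $j\neq k$, the two middle segments lie on internally disjoint paths, so $Q$ is a path.
\item $Q$ misses the internal vertices of $P_l$ for every $l\in[3]\setminus\{j,k\}$, and it misses $s_1,t_1$. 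Hence it is disjoint from $P_l$, contradicting \eqref{intersection-semicomplete-split}.
\end{itemize}

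\emph{The case $j=k$.} This is the only delicate point, since the two middle segments might overlap on $P_j$.
\begin{itemize}
\item If $w_i$ precedes $z_i$ on $P_j$, then $P_j[s_1^j,w_i]$ and $P_j[z_i,t_1^j]$ are disjoint, and the same $Q$ works.
\item If $z_i=w_i$ or $z_i$ precedes $w_i$, we do not need the arc at all. The path $Q_i[s_2,z_i]\circ P_j[z_i,w_i]\circ Q_i[w_i,t_2]$ is an $(s_2,t_2)$-path avoiding both paths $P_l$ with $l\neq j$, again contradicting \eqref{intersection-semicomplete-split}.
\end{itemize}

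Since every possibility leads to a contradiction, neither $t_1^j$ nor $s_1^k$ can lie in $V_2$, so $\{t_1^j,s_1^k\}\subseteq V_1$. The main thing to check carefully is this disjointness bookkeeping when $j=k$, together with the observation that the concatenation really is a path and not merely a walk.
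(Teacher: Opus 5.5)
Your proof is correct and follows the paper's argument: adjacency of $t_1^j$ and $s_1^k$, the arc $s_1^kt_1^j$ giving a short $(s_1,t_1)$-path, and the arc $t_1^js_1^k$ used to reroute $Q_i$ around some $P_l$. You are more careful than the paper about the degenerate case $t_1^j=s_1^k$ and the case $j=k$, where the paper's rerouted walk need not be a path, and you correctly start from $Q_i[s_2,z_i]$ where the paper writes $Q_j[s_2,z_j]$; one small slip is that a newly built $(s_2,t_2)$-path disjoint from some $P_l$ contradicts \eqref{assume-semicomplete-split}, not \eqref{intersection-semicomplete-split}, which only concerns the fixed paths $Q_1,Q_2,Q_3$.
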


\begin{proof}
   Suppose that \(\{t_1^j, s_1^k\} \cap V_2 \neq \emptyset\). Then \(t_1^j\) is adjacent to \(s_1^k\) by the definition of semicomplete split digraphs. If \(s_1^k \to t_1^j\), then \(P = s_1s_1^kt_1^jt_1\) is an
   \((s_1,t_1)\)-path of length 3, contradicting \eqref{length-semicomplete-split}.
   Hence, \(t_1^j \to s_1^k\), but then the \((s_2,t_2)\)-path \(Q = Q_j[s_2,z_j] \circ P_j[z_j,t_1^j] \circ t_1^j s_1^k \circ P_k[s_1^k,w_i] \circ Q_i[w_i,t_2]\) is disjoint from \(P_l\) for \(l \in \{1,2,3\} \setminus \{j,k\}\), a contradiction to \eqref{assume-semicomplete-split}.
\end{proof}

The proof below is divided into three cases based on the distributions of the vertices $z_1,z_2,z_3$ and $w_1,w_2,w_3$ on \(P_1,P_2,P_3\).

\vspace{4mm}

\mycase{There exist indices \(i_0,j_0\in [3]\) (possibly \(i_0=j_0\)) such that \(|\{z_1,z_2,z_3\} \cap V(P_{i_0})| \geq 2\) and \(|\{w_1,w_2,w_3\} \cap V(P_{j_0})| \geq 2\).}{Case 1}

Let \(\{z_{i'}, z_{j'}\} \subseteq \{z_1,z_2,z_3\} \cap V(P_{i_0})\) and \(\{w_{k'}, w_{l'}\} \subseteq \{w_1,w_2,w_3\} \cap V(P_{j_0})\) for some \(i',j',k',l' \in [3]\) with \(i' \neq j'\) and \(k' \neq l'\).
{We further assume, without loss of generality, that} \(|P_{i_0}[s_1,z_{i'}]| < |P_{i_0}[s_1,z_{j'}]|\) and \(|P_{j_0}[s_1,w_{k'}]| < |P_{j_0}[s_1,w_{l'}]|\).
We denote \(Q_{i'}^* = Q_{i'}[s_2,z_{i'}]\), \(Q_{j'}^* = Q_{j'}[s_2,z_{j'}]\), \(Q_{k'}^* = Q_{k'}[w_{k'},t_2]\) and \(Q_{l'}^* = Q_{l'}[w_{l'},t_2]\).
For notational convenience, let \(b\) be the predecessor of \(t_1\) on \(P_{i_0}\) (i.e., \(b = t_1^{i_0}\)) and \(a\) be the predecessor of \(b\) on \(P_{i_0}\); let \(c\) be the successor of \(s_1\) on \(P_{j_0}\) (i.e., \(c = s_1^{j_0}\)) and \(d\) be the successor of \(c\) on \(P_{j_0}\) (See Figure \ref{semicomplete-split-digraph-Figure1}).

\begin{figure}[htbp] 
    \centering 
    \includegraphics[width=0.45\textwidth]{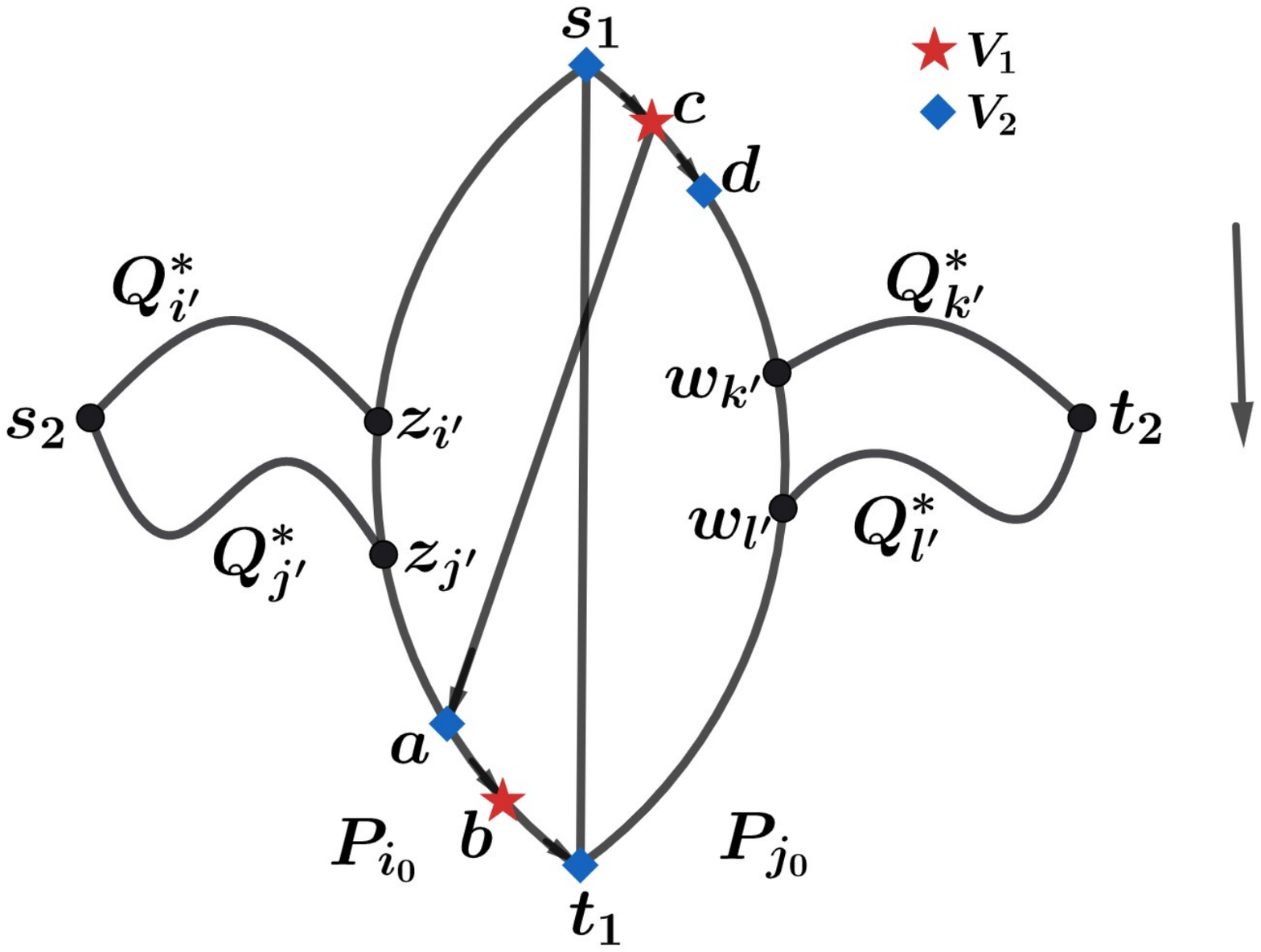}
    \caption{Example with \(i_0 = 1\) and \(j_0 = 3\).
   Red solid stars represent vertices in \(V_1\), blue diamonds represent vertices in \(V_2\), and black vertices are not fixed to be in either $V_1$ or $V_2$. The paths \(P_1,P_2,P_3\) are directed from top to bottom. This coloring rule is used for all following figures (without further mentioning).}
    \label{semicomplete-split-digraph-Figure1}
\end{figure}

To construct a new \((s_1,t_1)\)-path that is disjoint from an \((s_2,t_2)\)-path (either \(Q_i\) for some \(i \in \intthree\) or a newly constructed \((s_2,t_2)\)-path), we first establish Claim \ref{Locally semicomplete split: claim 3}.\\

\begin{claim}\label{Locally semicomplete split: claim 3}
 \(ca \in A(D)\).
\end{claim}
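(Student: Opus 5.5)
The plan is to follow the template of Claims~\ref{claim3.1} and \ref{case4:claim1}, adapted to semicomplete split digraphs. First I will show that $b$ and $c$ are non-adjacent, and hence both lie in $V_1$. From that I will get $a\in V_2$, so $a$ and $c$ are adjacent. Finally I will exclude the arc $ac$.

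\emph{Step 1: $b$ and $c$ are non-adjacent.} Note that $b\neq c$. Indeed, $b=c$ would give the $(s_1,t_1)$-path $s_1bt_1$, contradicting \eqref{length-semicomplete-split}. If $c\to b$, then $s_1cbt_1$ is an $(s_1,t_1)$-path of length $3$, again contradicting \eqref{length-semicomplete-split}. Now suppose $b\to c$ and consider
\[
Q=Q_{i'}^*\circ P_{i_0}[z_{i'},b]\circ bc\circ P_{j_0}[c,w_{k'}]\circ Q_{k'}^* .
\]
Since $z_{i'},w_{k'}\in S$, the vertex $z_{i'}$ lies on $P_{i_0}$ no later than $b$, and $w_{k'}$ lies on $P_{j_0}$ no earlier than $c$. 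By the choice of $z_i$ and $w_i$ as first and last vertices in $S$, the pieces $Q_{i'}^*$ and $Q_{k'}^*$ meet $S$ only in their endpoints. They are also disjoint from each other: if $i'=k'$ they are the prefix and suffix of one path $Q_{i'}$, split at $z_{i'}$ and $w_{i'}$, and otherwise the paths $Q_{i'}$ and $Q_{k'}$ are internally disjoint. Therefore $Q$ contains an $(s_2,t_2)$-path that avoids every $P_l$ with $l\notin\{i_0,j_0\}$, contradicting \eqref{intersection-semicomplete-split}.

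The case $i_0=j_0$ needs a separate check. There the two segments of $P_{i_0}$ used by $Q$ might overlap. However, an arc $bc$ going backwards along a single path $P_{i_0}$ is harmless: we simply do not need it, and in this case I will instead rely on Claim~\ref{Locally semicomplete split:claim 2}, or equivalently on the minimality of $P_{i_0}$. So $b$ and $c$ are non-adjacent. Since $D$ is a semicomplete split digraph, any two vertices not both in $V_1$ are adjacent, and therefore $b,c\in V_1$.

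\emph{Step 2: $a$ and $c$ are adjacent.} Since $a\to b$ and $b\in V_1$, the independence of $V_1$ forces $a\in V_2$. Because $c\in V_1$ and $a\in V_2$, the vertices $a$ and $c$ are adjacent. We also need $a\neq c$: otherwise $P_{i_0}=s_1abt_1$ would have length $3$, which is impossible.

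\emph{Step 3: excluding $a\to c$.} Suppose $a\to c$. Recall that $z_{i'}$ strictly precedes $z_{j'}$ on $P_{i_0}$, so $z_{i'}\neq b$, and hence $z_{i'}$ lies on $P_{i_0}[s_1,a]$. Then
\[
Q_{i'}^*\circ P_{i_0}[z_{i'},a]\circ ac\circ P_{j_0}[c,w_{k'}]\circ Q_{k'}^*
\]
is an $(s_2,t_2)$-path that misses $P_l$ for every $l\in[3]\setminus\{i_0,j_0\}$, contradicting \eqref{assume-semicomplete-split}. Hence $c\to a$, that is, $ca\in A(D)$.

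\emph{Main obstacle.} The delicate point is the degenerate case $i_0=j_0$. There all three constructed paths run along the same $P_{i_0}$, and I must check that the segments $P_{i_0}[z_{i'},\cdot]$ and $P_{i_0}[c,w_{k'}]$ do not collide. The approach I will try first is to use minimality of $P_{i_0}$. An arc from $a$ or $b$ back to $c$, where $c$ comes earlier on the same path, would not contradict minimality directly. I will argue instead that in this case $k_0$, the index in $[3]\setminus\{i_0\}$, leaves two other paths $P_l$ untouched. The rerouted $(s_2,t_2)$-walk, shortened to a path, stays inside $Q_{i'}^*\cup P_{i_0}\cup Q_{k'}^*$ and therefore still misses those paths $P_l$, which is enough for the contradiction.
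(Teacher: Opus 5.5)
Your proof is correct and follows the paper's route. The paper gets $b,c\in V_1$ by citing Claim~\ref{Locally semicomplete split:claim 2}, whose proof is exactly your Step~1 specialised to $P_{i_0},P_{j_0}$; it then deduces $a\in V_2$ and excludes $a\to c$ with the same rerouted path $Q_{i'}^*\circ P_{i_0}[z_{i'},a]\circ ac\circ P_{j_0}[c,w_{k'}]\circ Q_{k'}^*$. Your aside that minimality of $P_{i_0}$ settles the case $i_0=j_0$ is wrong, since backward arcs along $P_{i_0}$ are allowed, but your final walk-shortening argument (or simply citing Claim~\ref{Locally semicomplete split:claim 2}) handles that case correctly.
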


\begin{proof}
Claim \ref{Locally semicomplete split:claim 2} implies that \(b, c \in V_1\), which further yields \(a, d \in V_2\). In particular $a$ and $c$ are adjacent and $b$ and $d$ are adjacent.
If \(a \to c\), then \(Q = Q_{i'}^* \circ P_{i_0}[z_{i'},a] \circ ac \circ P_{j_0}[c,w_{k'}] \circ Q_{k'}^*\) forms an \((s_2,t_2)\)-path that is disjoint from \(P_i\) for all \(i \in [3] \setminus \{i_0,j_0\}\), contradicting \eqref{assume-semicomplete-split}.
Hence it follows that \(ca \in A(D)\). 
\end{proof}

By Claim \ref{Locally semicomplete split: claim 3}, we obtain a new $(s_1, t_1)$-path $P' = s_1 c a b t_1$, which we shall use in the subsequent proof and refer to in the following arguments.\\

It follows from the assumption that $(D,s_1,t_1,s_2,t_2)$ is not good that each of $Q_1,Q_2,Q_3$ contain exactly one of the vertices $a,b,c$, because they are internally disjoint and all intersect $P'$. 
Without loss of generality, we may assume \(a\in V(Q_1)\), \(b\in V(Q_2)\), and \(c\in V(Q_3)\). 

Let \(k_0\) denote an arbitrary index in \(\intthree\setminus\{i_0,j_0\}\). We next analyze the intersections between the subpaths of \(Q_1, Q_2, Q_3\) and \(P_{k_0}\). Suppose {\(V(Q_1[a,t_2])\cap V(P_{k_0})=\emptyset\).} Then the \((s_2,t_2)\)-path \(Q = Q_{i'}^* \circ P_{i_0}[z_{i'},a] \circ Q_1[a,t_2]\) is disjoint from \(P_{k_0}\), contradicting \eqref{assume-semicomplete-split}. Analogously, \(|V(Q_2[b,t_2])\cap V(P_{k_0})|\ge 1\). Moreover, if \(V(Q_3[s_2,c])\cap V(P_{k_0})=\emptyset\), then the \((s_2,t_2)\)-path \(Q' = Q_3[s_2,c] \circ P_{j_0}[c,w_{k'}] \circ Q_{k'}^*\) is disjoint from \(P_{k_0}\), a contradiction to \eqref{assume-semicomplete-split}. 
We thus conclude that
\[
|V(Q_1[a,t_2])\cap V(P_{k_0})|, |V(Q_2[b,t_2])\cap V(P_{k_0})|, |V(Q_3[s_2,c])\cap V(P_{k_0})|\ge 1.
\]
{This guarantees that we may select \(x_1 \in V(Q_1(a,t_2]) \cap V(P_{k_0})\) to minimize \(|P_{k_0}[x_1,t_1]|\), \(x_2 \in V(Q_2(b,t_2]) \cap V(P_{k_0})\) to minimize \(|P_{k_0}[x_2,t_1]|\) and \(x_3 \in V(Q_3[s_2,c)) \cap V(P_{k_0})\)  to minimize \(|P_{k_0}[s_1,x_3]|\) 

If $|P_{k_0}[s_1,x_q]| > |P_{k_0}[s_1,x_3]|$, for some $q\in [2]$, then the $(s_2,t_2)$-path $Q = Q_3[s_2, x_3] \circ P_{k_0}[x_3, x_q] \circ Q_q[x_q, t_2]$ is disjoint from $P'$, contradicting \eqref{assume-semicomplete-split}.
Thus, we obtain
\[
\max\{|P_{k_0}[s_1,x_1]|,|P_{k_0}[s_1,x_2]| < |P_{k_0}[s_1,x_3]|.
\]


\begin{claim}\label{Locally semicomplete split: claim 7}
    If $|P_{k_0}[s_1,x_1]|<|P_{k_0}[s_1,x_2]|$, then
    $x_1, x_3 \in V_1$ and $x_2 \in V_2$. Furthermore $P_{k_0}=s_1x_1x_2x_3t_1$
\end{claim}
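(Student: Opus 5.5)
The argument will mirror Claim~\ref{case4:claim5}, using the semicomplete split structure, namely that every $V_1$--$V_2$ pair and every $V_2$--$V_2$ pair is adjacent. The order along $P_{k_0}$ is $s_1,\dots,x_1,\dots,x_2,\dots,x_3,\dots,t_1$, so $x_1,x_2,x_3$ are distinct internal vertices and $|V(P_{k_0})|\ge 5$.

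The first step is the key observation
\[
\text{there is no arc from } P_{k_0}[x_3,t_1) \text{ to } P_{k_0}(s_1,x_2].
\]
Suppose $uv$ is such an arc. Then $Q=Q_3[s_2,x_3]\circ P_{k_0}[x_3,u]\circ uv\circ P_{k_0}[v,x_2]\circ Q_2[x_2,t_2]$ is an $(s_2,t_2)$-path. It avoids $P'=s_1cabt_1$, because $Q_3[s_2,x_3]\subseteq Q_3[s_2,c)$ and $Q_2[x_2,t_2]\subseteq Q_2(b,t_2]$ contain none of $a,b,c$ (each $Q_i$ meets $\{a,b,c\}$ in exactly one vertex). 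Also, $P_{k_0}$ is internally disjoint from $P_{i_0},P_{j_0}$ and so avoids $a,b,c$. This contradicts \eqref{assume-semicomplete-split}. By the same reasoning with $Q_1[x_1,t_2]$, there is no arc from $P_{k_0}[x_3,t_1)$ to $P_{k_0}(s_1,x_1]$; that case is already covered because $x_1$ precedes $x_2$.

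Next we use minimality of $P_{k_0}$: it has no forward chords in $D\langle V(P_{k_0})\rangle$. Take any $u\in P_{k_0}[x_3,t_1)$ and $v\in P_{k_0}(s_1,x_2]$. These are distinct and non-consecutive, except possibly when $v=x_2$ and $u=x_3$ are consecutive. Forward arcs $vu$ are forbidden by minimality (apart from the path arc itself), and backward arcs $uv$ are forbidden by the observation. Hence $u,v$ are non-adjacent, which forces $u,v\in V_1$, with the single consecutive pair as the exception. Apply this to $u=x_3$ and $v=x_1$. They are non-consecutive, so both lie in $V_1$. Every vertex $v\in P_{k_0}(s_1,x_2)$ is non-adjacent to $x_3$, so $v\in V_1$. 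Since $V_1$ is independent and the path arcs join consecutive vertices, $P_{k_0}(s_1,x_2)$ consists of the single vertex $x_1$. Symmetrically, every vertex of $P_{k_0}(x_3,t_1)$ must be in $V_1$ and non-adjacent to $x_1$, hence in $V_1$; combined with independence against $x_3\in V_1$, there is no such vertex, and $x_3$ is followed directly by $t_1$.

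It remains to treat the segment $P_{k_0}(x_2,x_3)$ and $x_2$ itself. Any vertex $w$ strictly between $x_2$ and $x_3$ is non-consecutive to $x_1$ and lies in $P_{k_0}(s_1,x_2]$ only if $w=x_2$. So I would instead compare $w$ with $x_1$ using minimality: $x_1w$ would be a forward chord, and the arc $wx_1$ yields an $(s_2,t_2)$-path $Q_3[s_2,x_3]$... which does not reach $w$. This is the step I expect to be the main obstacle. I would handle it by noting that $w,x_1$ non-adjacent forces $w\in V_1$, and likewise $x_3\in V_1$, so $w$ cannot precede $x_3$ directly. Thus any such $w$ would need $V_2$ vertices between them, and those are adjacent to $x_1$, giving a chord or a contradiction through $Q_1[x_1,t_2]$ rerouted via $Q_3[s_2,x_3]\circ P_{k_0}[x_3,\cdot]$. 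I expect this forces $P_{k_0}(x_2,x_3)=\emptyset$. Finally, $x_1,x_3\in V_1$ with $x_1x_2$ an arc gives $x_2\in V_2$, so $P_{k_0}=s_1x_1x_2x_3t_1$.
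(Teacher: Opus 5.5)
Your approach is the paper's argument in uniform form. The paper excludes, one at a time, the arcs from $x_3$ to $x_1$, to the predecessor of $x_2$ and to the path-neighbours of $x_1$, the arc from the successor of $x_3$ to $x_1$, and finally $x_3x_2$; each of these is an instance of your key observation. Your deductions $x_1,x_3\in V_1$, $P_{k_0}[s_1,x_2]=s_1x_1x_2$ (so $x_2\in V_2$) and $P_{k_0}[x_3,t_1]=x_3t_1$ are correct.

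The gap is the last step, excluding vertices strictly between $x_2$ and $x_3$, which you only ``expect''. The route you sketch does not work. For $w\in P_{k_0}(x_2,x_3)$, nothing you have established excludes an arc $wx_1$: your observation only controls tails in $P_{k_0}[x_3,t_1)$, and, as you noticed, the obvious rerouting from $Q_3[s_2,x_3]$ does not reach $w$. So ``$w,x_1$ non-adjacent'' is unfounded. Likewise, rerouting through $Q_3[s_2,x_3]\circ P_{k_0}[x_3,\cdot]$ only moves toward $t_1$ and never re-enters $P_{k_0}(x_2,x_3)$.

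The missing step is already covered by your own non-adjacency principle, applied to the pair $u=x_3$, $v=x_2$. Suppose $x_2,x_3$ are not consecutive on $P_{k_0}$. Minimality forbids the chord $x_2x_3$, and your observation forbids $x_3x_2$ (via $Q_3[s_2,x_3]\circ x_3x_2\circ Q_2[x_2,t_2]$, which avoids $P'$). So they are non-adjacent, hence $x_2\in V_1$, contradicting $x_1\in V_1$ and $x_1x_2\in A(D)$. Thus $x_3$ immediately follows $x_2$ and $P_{k_0}=s_1x_1x_2x_3t_1$.

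This is exactly how the paper finishes: since $x_2\in V_2$ is adjacent to $x_3$ and the forward chord is excluded, $x_3x_2\in A(D)$, and the path above gives the contradiction.
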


\begin{proof}

Let $u$ denote the predecessor of $x_2$ on $P_{k_0}$. We observe that if there exists an arc $x_3 \to x_1$ or $x_3 \to u$, then we can construct an $(s_2,t_2)$-path
\[
Q' = Q_3[s_2,x_3] \circ x_3x_1 \circ Q_1[x_1,t_2]
\]
or
\[
Q'' = Q_3[s_2,x_3] \circ x_3u \circ P_{k_0}[u,x_2] \circ Q_2[x_2,t_2]
\]
that is disjoint from $P'$, which yields a contradiction to \eqref{assume-semicomplete-split}.
Therefore, we have $x_3 \nrightarrow x_1$ and $x_3 \nrightarrow u$.
Combining this with the definition of semicomplete split digraphs and the minimality of $P_{k_0}$, we conclude that $x_1, x_3, u \in V_1$ and thus $x_2 \in V_2$ (see Figure \ref{semicomplete-split-digraph-Figure3}). 

\begin{figure}[htbp]
    \centering
    \begin{subfigure}{0.455\textwidth}
        \centering
        \includegraphics[width=\linewidth]{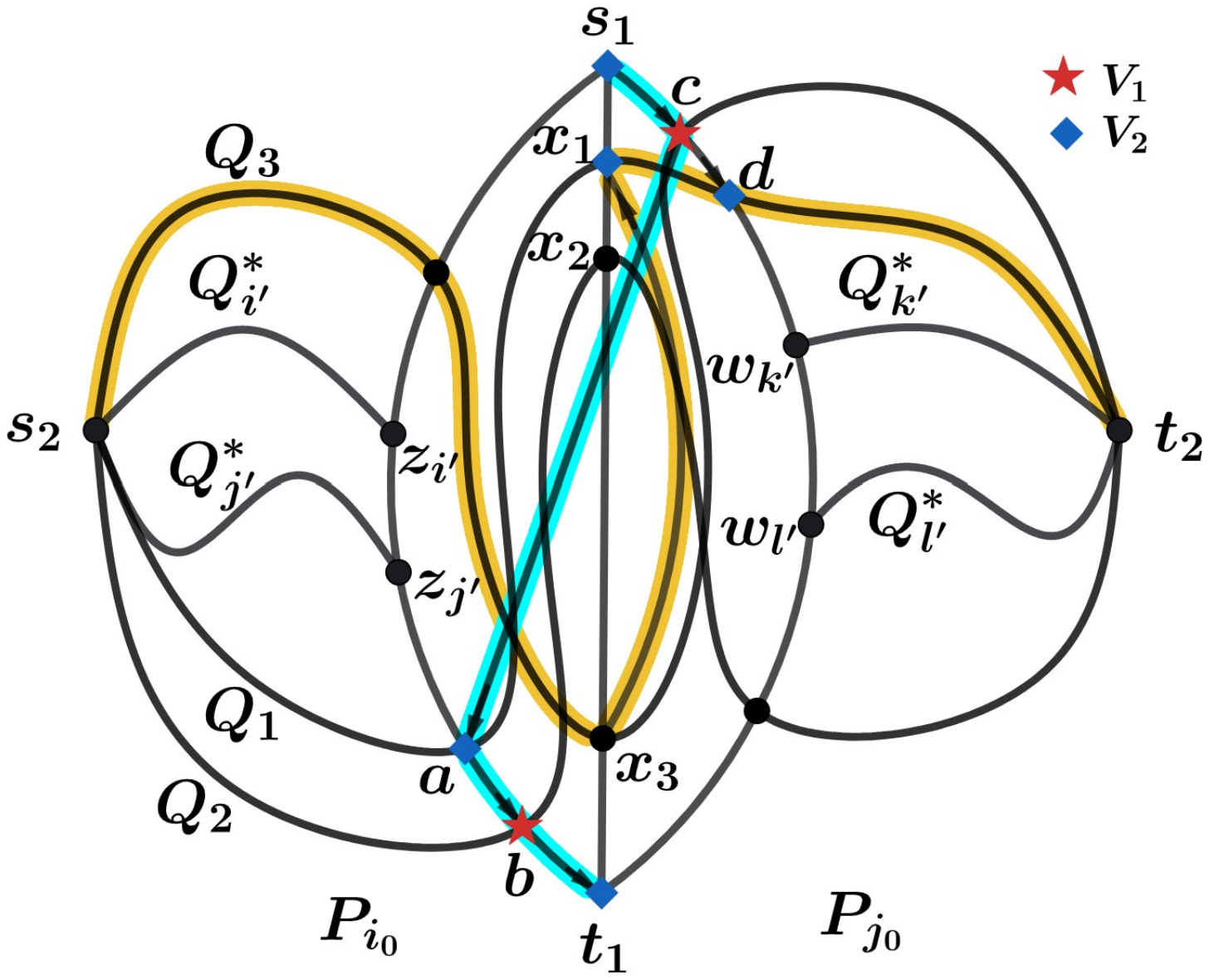} 
        \caption{}
        \label{semicomplete-split-digraph-Figure3-(a)}
    \end{subfigure}
    \hfill 
    \begin{subfigure}{0.455\textwidth}
        \centering
        \includegraphics[width=\linewidth]{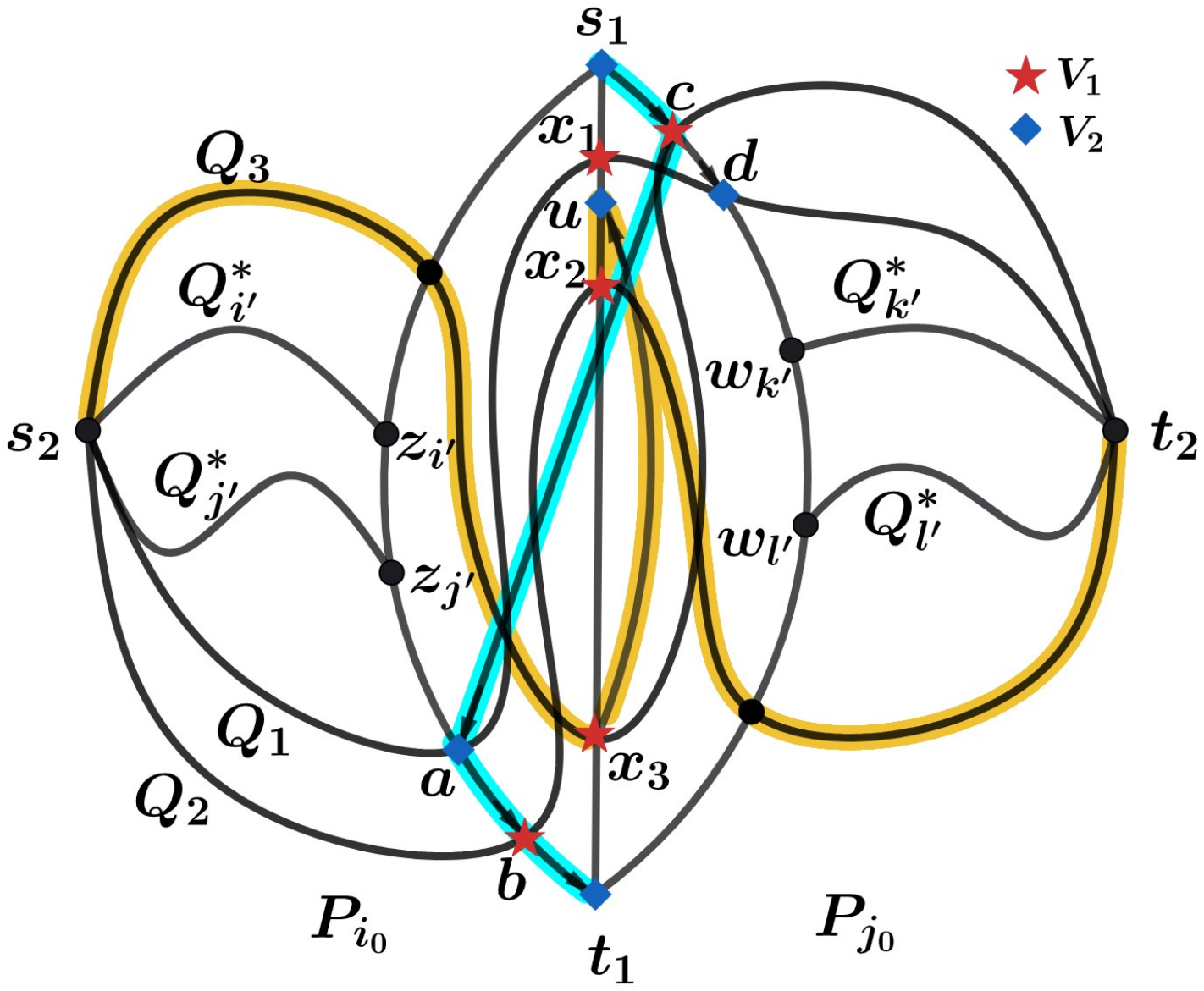}
        \caption{}
        \label{semicomplete-split-digraph-Figure3-(b)}
    \end{subfigure}
  \caption{Example with $i_0 = 1$ and $j_0 = 3$. (a) $x_3 \to x_1$; (b) $x_3 \to u$.}
    \label{semicomplete-split-digraph-Figure3}
\end{figure}

Next, we verify that $P_{k_0}[s_1,x_2] = s_1x_1x_2$. Let \(v_1 \in V_2\) denote the predecessor of \(x_1\) on \(P_{k_0}\), and let \(v_2 \in V_2\) be its successor on \(P_{k_0}\). 
We observe that if \(|P_{k_0}[s_1,x_1]| \geq 3\) or \(|P_{k_0}[x_1,x_2]| \geq 3\), then by the minimality of \(P_{k_0}\) and the definition of semicomplete split digraphs, we can construct an \((s_2,t_2)\)-path
\[
Q^* = Q_3[s_2, x_3] \circ x_3v_1 \circ v_1x_1 \circ Q_1[x_1, t_2]
\]
or
\[
Q^{**} = Q_3[s_2, x_3] \circ x_3v_2 \circ P_{k_0}[v_2,x_2] \circ Q_2[x_2, t_2]
\]
that is disjoint from \(P'\), which yields a contradiction to \eqref{assume-semicomplete-split}. 
It follows that \(P_{k_0}[s_1,x_1] = s_1x_1\) and \(P_{k_0}[x_1,x_2] = x_1x_2\). \\

If the successor $w$ of $x_3$ on $P_{k_0}$ is not $t_1$, then $Q_3[s_2,x_3]\circ P_{k_0}[x_3,w]\circ wx_1\circ Q_1[x_1,t_2]$ is disjoint from $P'$
contradiction. Hence we have $P_{k_0}[x_3t_1]=x_3t_1.$ Finally, if $|P_{k_0}[x_2,x_3]|>2$, then $x_3x_2\in A(D)$ and the path $Q_3[s_2,x_3]\circ x_3x_2\circ Q_2[x_2,t_2]$ is disjoint from $P'$, contradiction. Hence we have shown that $P_{k_0}=s_1x_1x_2x_3t_1$.
\end{proof}

\begin{claim}
    If $|P_{k_0}[s_1,x_2]|<|P_{k_0}[s_1,x_1]|$, then
    $x_2, x_3 \in V_1$ and $x_1 \in V_2$. Furthermore, $P_{k_0}=s_1x_2x_1x_3t_1$
\end{claim}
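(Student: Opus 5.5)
This claim is the mirror image of Claim~\ref{Locally semicomplete split: claim 7}, with the roles of $Q_1$ and $Q_2$ (equivalently $x_1$ and $x_2$) exchanged. We already know $\max\{|P_{k_0}[s_1,x_1]|,|P_{k_0}[s_1,x_2]|\}<|P_{k_0}[s_1,x_3]|$, so the order along $P_{k_0}$ is now $s_1,\dots,x_2,\dots,x_1,\dots,x_3,\dots,t_1$. The only properties of $x_1,x_2$ used in the previous argument are that $Q_q[x_q,t_2]$ is disjoint from $P'=s_1cabt_1$ for $q\in[2]$ and that $Q_3[s_2,x_3]$ avoids $P'$. Both properties are symmetric in $q$, because $x_q$ was chosen on $Q_q(\cdot,t_2]$ after the vertex of $P'$ on $Q_q$. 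Every $(s_2,t_2)$-path built below is therefore internally disjoint from $P'$, contradicting \eqref{assume-semicomplete-split}.

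\textbf{Vertex classes.} Let $u$ be the predecessor of $x_1$ on $P_{k_0}$. If $x_3\to x_2$, the path $Q_3[s_2,x_3]\circ x_3x_2\circ Q_2[x_2,t_2]$ avoids $P'$. If $x_3\to u$, the path $Q_3[s_2,x_3]\circ x_3u\circ P_{k_0}[u,x_1]\circ Q_1[x_1,t_2]$ avoids $P'$. Hence $x_3\not\to x_2$ and $x_3\not\to u$. In a semicomplete split digraph, two vertices are non-adjacent only if both lie in $V_1$. Minimality of $P_{k_0}$ rules out the forward arcs $x_2x_3$ and $ux_3$ (the vertices $x_2$ and $u$ precede $x_3$ but are not its predecessor, since $x_1$ lies between them). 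Therefore $x_3,x_2,u\in V_1$. Then $x_1\in V_2$, since $u\to x_1$ and $V_1$ is independent.

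\textbf{Shape of $P_{k_0}$.} We show each segment has length one. Suppose $|P_{k_0}[s_1,x_2]|\ge 2$, and let $v_1$ be the predecessor of $x_2$. Then $v_1\in V_2$, so $v_1$ is adjacent to $x_3\in V_1$. Minimality forbids $v_1x_3$, so $x_3v_1$ is an arc, and $Q_3[s_2,x_3]\circ x_3v_1x_2\circ Q_2[x_2,t_2]$ avoids $P'$. Similarly, if the successor $v_2$ of $x_2$ is not $x_1$, then $v_2\in V_2$ and $x_3v_2$ is an arc by minimality, so $Q_3[s_2,x_3]\circ x_3v_2\circ P_{k_0}[v_2,x_1]\circ Q_1[x_1,t_2]$ avoids $P'$. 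If the successor $w$ of $x_3$ is not $t_1$, then $w\in V_2$ is adjacent to $x_2\in V_1$, and minimality forces $wx_2$, giving $Q_3[s_2,x_3]\circ P_{k_0}[x_3,w]\circ wx_2\circ Q_2[x_2,t_2]$. Finally, if $|P_{k_0}[x_1,x_3]|\ge 2$, the vertex $x_1\in V_2$ is adjacent to $x_3$ and minimality forces $x_3x_1$, giving $Q_3[s_2,x_3]\circ x_3x_1\circ Q_1[x_1,t_2]$. In this last case the predecessor $u$ of $x_1$ is then $x_2$, consistent with the above.

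\textbf{Main obstacle.} The main care is checking that every vertex of each constructed $(s_2,t_2)$-path lies outside $P'$. This holds because all vertices used lie on $P_{k_0}$, which is internally disjoint from $P_{i_0}$ and $P_{j_0}$ (so it misses $a,b,c$), or on $Q_3[s_2,x_3]$ or $Q_q[x_q,t_2]$, which avoid $a,b,c$ by the choice of the $x_i$. With this checked, every alternative yields a contradiction, so $P_{k_0}=s_1x_2x_1x_3t_1$ with $x_2,x_3\in V_1$ and $x_1\in V_2$.
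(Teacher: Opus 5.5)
Your proposal is correct and follows the paper's route. The paper proves this claim in one line, as the analogue of Claim~\ref{Locally semicomplete split: claim 7} with the roles of $x_1$ and $x_2$ interchanged; you carry out that mirrored argument explicitly, using minimality of $P_{k_0}$ to force arc directions and turning each forbidden arc into an $(s_2,t_2)$-path avoiding $P'$. Two small points: your ``$|P_{k_0}[s_1,x_2]|\ge 2$'' counts arcs, whereas the paper's $|\cdot|$ counts vertices, and some of your concatenations may only be walks, though each such walk contains the required path.
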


\begin{proof}
    The proof is analogous to the proof of Claim \ref{Locally semicomplete split: claim 7} by interchanging the roles of $x_1$ and $x_2$.
\end{proof}

Suppose first that $P_{k_0}=s_1x_1x_2x_3t_1$.
By Claims \ref{Locally semicomplete split:claim 2} and \ref{Locally semicomplete split: claim 7}, \(x_2\) is adjacent to \(c\) and \(x_3\) is adjacent to \(d\). If \(c \to x_2\), then the \((s_1,t_1)\)-path \(P = s_1 c x_2 x_3 t_1\) is disjoint from \(Q_1\), so we must have \(x_2 \to c\). If \(d \to x_3\), then the \((s_1,t_1)\)-path \(P = s_1 c d x_3 t_1\) is disjoint from \(Q_2\), another contradiction to \eqref{assume-semicomplete-split}. Hence,  \(x_3 \to d\). However, combining the arc relations \(x_2 \to c\) and \(x_3 \to d\), we see that the \((s_1,t_1)\)-path \(P = s_1 x_1 x_2 c a b t_1\) is disjoint from the \((s_2,t_2)\)-path \(Q = Q_3[s_2, x_3] \circ x_3 d \circ P_{j_0}[d, w_{l'}] \circ Q_{l'}^*\), which again leads to a contradiction to \eqref{assume-semicomplete-split}. Now consider the case when $P_{k_0}=s_1x_2x_1x_3t_1$. Interchanging the roles of $x_1$ and $x_2$ above we obtain that $x_1a,x_3d\in A(D)$ and then the $(s_1,t_1)$-path $s_1x_2x_1cabt$ is disjoint from the $(s_2,t_2)$-path $Q$ above, leading to the final contradiction.


\vspace{4mm}

\mycase{There exists an index \(i_0' \in \intthree\) such that \(|\{z_1,z_2,z_3\} \cap V(P_{i_0})| \geq 2\) and \(|\{w_1,w_2,w_3\} \cap V(P_i)| < 2\) for every \(i \in \intthree\).}{Case 2}

In this case, \(|\{w_1,w_2,w_3\} \cap V(P_i)| = 1\) for all \(i \in \intthree\). We thus let \(w_{i_1} \in V(P_1)\), \(w_{i_2} \in V(P_2)\), and \(w_{i_3} \in V(P_3)\) where \(\{i_1, i_2, i_3\} = \{1,2,3\}\).
Without loss of generality, assume that \(i_0' = 1\) and \(\{z_{i'}, z_{j'}\} \subseteq \{z_1,z_2,z_3\} \cap V(P_1)\), and further that \(|P_1[s_1, z_{i'}]| < |P_1[s_1, z_{j'}]|\).
The vertex $w_{i_1}$ appears on $P_1$ before each of $z_{i'}, z_{j'}$ as otherwise $Q^*_{i'}\circ P_1[z_{i'},w_{i_1}]\circ Q_{i_1}[w_{i_1},t_2]$ is disjoint from $P_2$.

Let \(b\) be the predecessor of \(t_1\) on \(P_1\) (i.e., \(b = s_1^1\)) and \(a\) be the predecessor of \(b\) on \(P_1\). Fix an arbitrary path \(P_i\) with \(i \in \{2,3\}\) distinct from \(P_1\), without loss of generality, let this path be \(P_3\). Let \(c\) be the successor of \(s_1\) on \(P_3\) (i.e., \(c = s_1^3\)) and \(d\) be the successor of \(c\) on \(P_3\). By Claim \ref{Locally semicomplete split:claim 2}, we have \(b, c \in V_1\) and \(a, d \in V_2\). We denote \(Q_{i'}^* = Q_{i'}[s_2, z_{i'}]\) and \(Q_{j'}^* = Q_{j'}[s_2, z_{j'}]\). If} \(a \to c\), then the \((s_2, t_2)\)-path \(Q = Q_{i'}^* \circ P_1[z_{i'}, a] \circ ac \circ P_3[c, w_{i_3}] \circ Q_{i_3}[w_{i_3}, t_2]\) is disjoint from \(P_2\), contradicting \eqref{assume-semicomplete-split}. We thus conclude that \(c \to a\) (See Figure \ref{semicomplete-split-digraph-Figure4-(a)}).

As in the previous case we observe that each $Q_i$, $i\in [3]$ contains exactly one of the vertices $a,b,c$.

Without loss of generality, we assume \(a\in V(Q_1)\), \(b\in V(Q_2)\), and \(c\in V(Q_3)\). If \(V(Q_3[s_2,c]) \cap V(P_1) = \emptyset\), then the \((s_2,t_2)\)-path \(Q = Q_3[s_2,c] \circ P_3[c,w_{i_3}] \circ Q_{w_{i_3}}[w_{i_3}, t_2]\) is disjoint from \(P_1\), which yields a contradiction to \eqref{assume-semicomplete-split}. Thus, \(|V(Q_3[s_2,c]) \cap V(P_1)| \geq 1\). Further, let \(u\) be a vertex such that \(u \in V(Q_3[s_2, c]) \cap V(P_1)\). Clearly, \(w_{i_1} \notin \{a, b, u\}\) and \(\{a, b, u, w_{i_1}\} \subseteq V(P_1)\). Note that both \(z_{i'}\) and \(z_{j'}\) might lie in \(\{a, b, u\}\). By Claim \ref{Locally semicomplete split:claim 2} and the minimality of \(P_1\), we have \(a \to s_1^1\) (See Figure \(\ref{semicomplete-split-digraph-Figure4-(b)}\)). Thus the \((s_2,t_2)\)-path \(Q = Q_{i'}^* \circ P_1[z_{i'},a] \circ as_1^1 \circ P_1[s_1^1,w_{i_1}] \circ Q_{i_1}[w_{i_1},t_2]\) is disjoint from every \((s_1,t_1)\)-path \(P_i\) for all \(i \in \{2,3\}\), which again contradicts \eqref {assume-semicomplete-split}. 

\begin{figure}[htbp]
    \centering
    \begin{subfigure}{0.455\textwidth}
        \centering
        \includegraphics[width=\linewidth]{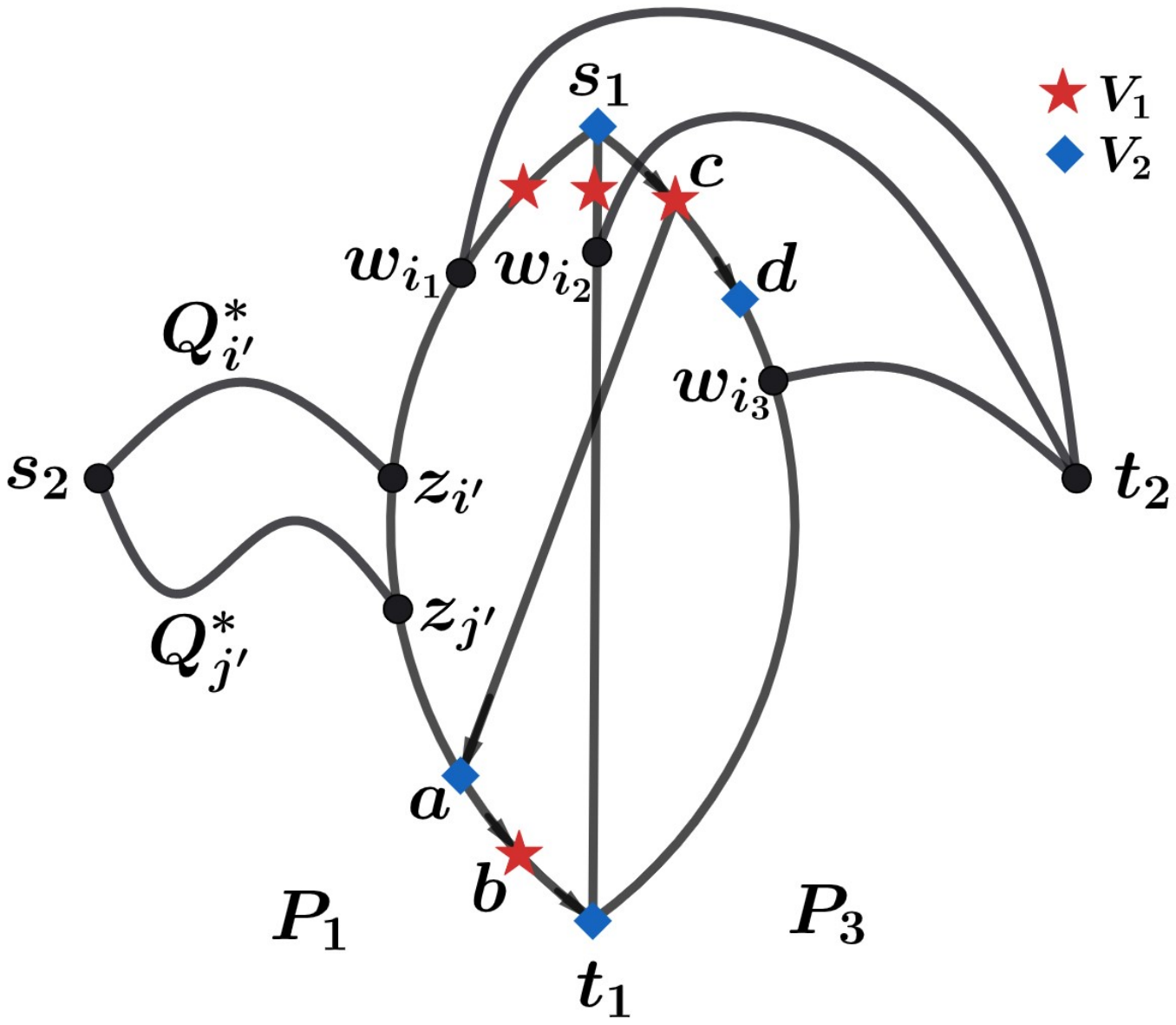} 
        \caption{}
        \label{semicomplete-split-digraph-Figure4-(a)}
    \end{subfigure}
    \hfill 
    \begin{subfigure}{0.455\textwidth}
        \centering
        \includegraphics[width=\linewidth]{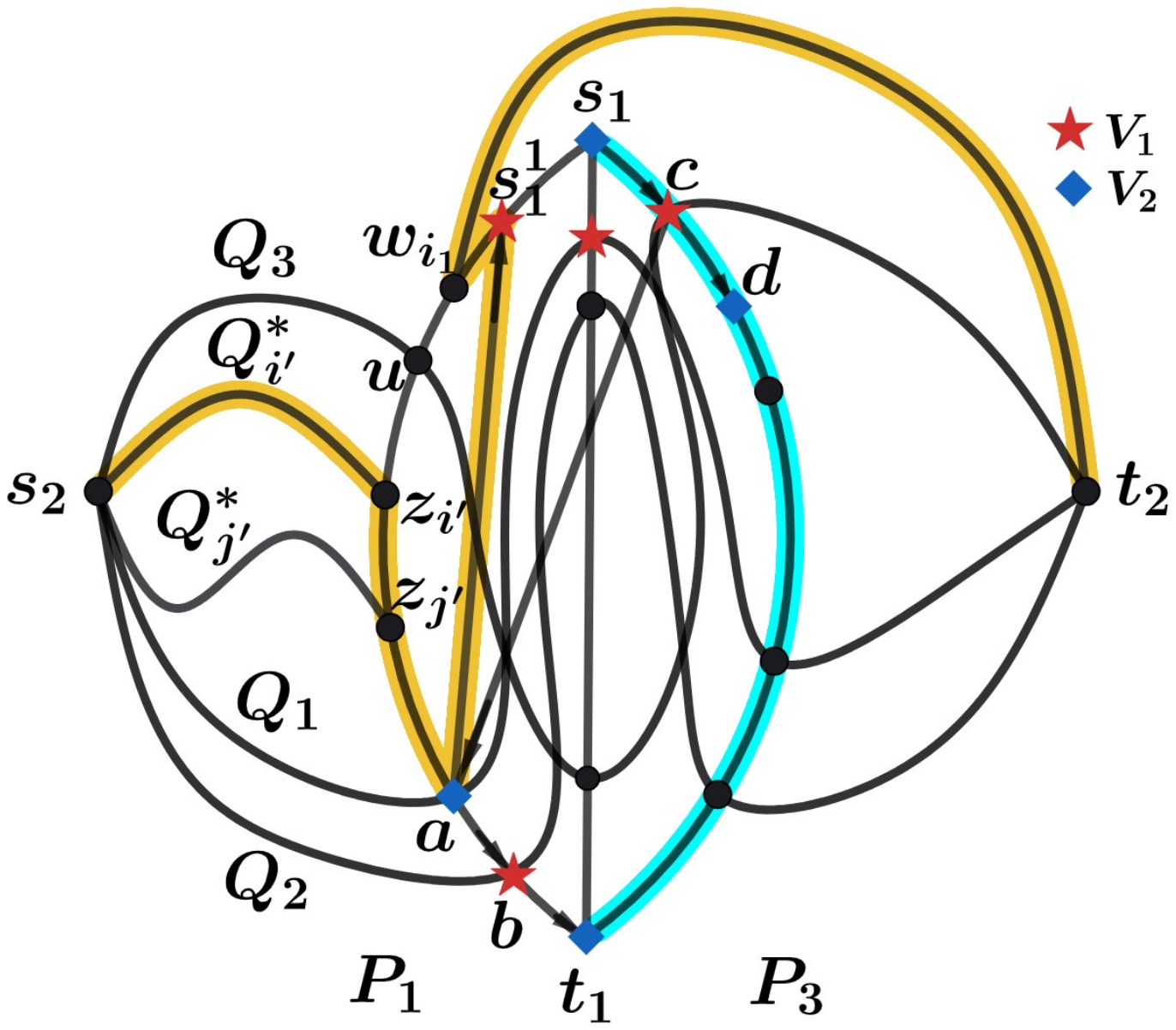}
        \caption{}
        \label{semicomplete-split-digraph-Figure4-(b)}
    \end{subfigure}
   \caption{Example with \(i_0' = 1\). (b) illustrates that \(|\{a,b,u,w_{i_1}\} \cap V(P_1)| = 4\).}
    \label{semicomplete-split-digraph-Figure4}
\end{figure}

In the case when \(|\{w_1,w_2,w_3\} \cap V(P_{i_0})| \geq 2\)   for some \(i_0 \in \intthree\) and \(|\{z_1,z_2,z_3\} \cap V(P_i)| < 2\) for every \(i \in \intthree\) we obtain the desired contradiction by reversing all arcs of \(D\) and denote the resulting digraph by \(D'\). By the proof above, \(D'\) contains a pair of disjoint \((t_1, s_1)\)-paths and \((t_2, s_2)\)-paths. This implies that \(D\) contains a pair of disjoint \((s_1, t_1)\)-paths and \((s_2, t_2)\)-paths, which contradicts \eqref{assume-semicomplete-split}.


\vspace{4mm}
\mycase{For every \(i\in\intthree\), \(|\{z_1,z_2,z_3\}\cap V(P_i)|<2\) and \(|\{w_1,w_2,w_3\}\cap V(P_i)|<2\).}{Case 3}

It follows that for each \(i\in\intthree\), \(|\{z_1,z_2,z_3\}\cap V(P_i)|=1\) and \(|\{w_1,w_2,w_3\}\cap V(P_i)|=1\). Recall that \(s_1^1\), \(s_1^2\), \(s_1^3\) are the successors of \(s_1\) on \(P_1\), \(P_2\), \(P_3\) respectively, and \(t_1^1\), \(t_1^2\), \(t_1^3\) are the predecessors of \(t_1\) on \(P_1\), \(P_2\), \(P_3\) respectively. By Claim \ref{Locally semicomplete split:claim 2}, we have \(\{s_1^1, s_1^2, s_1^3, t_1^1, t_1^2, t_1^3\} \subseteq V_1\). Using the minimality of \(P_1, P_2, P_3\) and the definition of semicomplete split digraphs, we verify that each \(P_i\) has exactly five vertices for \(i\in \intthree\), as established in Claim \ref{Locally semicomplete split: claim 10}.

\begin{claim}\label{Locally semicomplete split: claim 10}
    \(|V(P_1)| = |V(P_2)| = |V(P_3)| = 5\). Furthermore, for each \(i\in\intthree\), we have \(P_i = s_1 s_1^i y_i t_1^i t_1\). Moreover, \(y_i\in V_2\).
\end{claim}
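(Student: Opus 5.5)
The plan is to work one path at a time. Fix $i\in\intthree$ and write $P_i=s_1s_1^i\cdots t_1^it_1$. By Claim~\ref{Locally semicomplete split:claim 2} we already know $s_1^i,t_1^i\in V_1$.

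First, $s_1^i\neq t_1^i$. Otherwise $P_i$ would have length $2$, contradicting \eqref{length-semicomplete-split}. Since $V_1$ is independent, $s_1^i$ and $t_1^i$ are not adjacent, so $P_i$ has at least one vertex strictly between them and $|V(P_i)|\ge 5$. Let $y$ be the successor of $s_1^i$ and $y'$ the predecessor of $t_1^i$ on $P_i$. Because $s_1^i,t_1^i\in V_1$ and $V_1$ is independent, both $y,y'\in V_2$. So it suffices to show $y=y'$. This gives $P_i=s_1s_1^iy_it_1^it_1$ with $y_i:=y\in V_2$.

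Second, I record how the $Q$'s meet $P_i$ in Case~3. Exactly one $z_j$ and exactly one $w_m$ lie on $P_i$.
\begin{itemize}
\item $z_j\neq w_m$. If $z_j=w_m$, then $j=m$, so $Q_j$ meets $S$ only in vertices between its first and last hit, which are the same vertex. Then $Q_j$ avoids the other two paths, contradicting \eqref{intersection-semicomplete-split}.
\item $w_m$ cannot come before $z_j$ on $P_i$ is false, i.e.\ $w_m$ precedes $z_j$. Indeed, if $z_j$ preceded $w_m$, then $Q_j[s_2,z_j]\circ P_i[z_j,w_m]\circ Q_m[w_m,t_2]$ would be an $(s_2,t_2)$-path avoiding $P_l$ for $l\neq i$, contradicting \eqref{assume-semicomplete-split}.
\end{itemize}
Hence on $P_i$ we have $s_1^i\preceq w_m\prec z_j\preceq t_1^i$.

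Third, suppose for contradiction that $y\neq y'$. Since $y,t_1^i$ lie in different parts of a semicomplete split digraph, they are adjacent. The arc $yt_1^i$ would be a forward chord, since $y$ is not the predecessor of $t_1^i$; this contradicts the minimality of $P_i$. So $t_1^i\to y$. Symmetrically, $s_1^i\to y'$ would be a forward chord, so $y'\to s_1^i$. These two backward arcs let us return from any later vertex of $P_i$ to any earlier one while staying inside $V(P_i)$. The cases are:
\begin{itemize}
\item If $w_m\neq s_1^i$, so $w_m$ lies in $P_i[y,\cdot]$, use $P_i[z_j,t_1^i]\circ t_1^iy\circ P_i[y,w_m]$.
\item If $w_m=s_1^i$ and $z_j\neq t_1^i$, so $z_j$ lies in $P_i[\cdot,y']$, use $P_i[z_j,y']\circ y's_1^i$.
\item If $w_m=s_1^i$ and $z_j=t_1^i$, use $t_1^iy\circ P_i[y,y']\circ y's_1^i$.
\end{itemize}
In each case we obtain a $(z_j,w_m)$-path $R$ inside $V(P_i)\setminus\{s_1,t_1\}$. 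Then $Q_j[s_2,z_j]\circ R\circ Q_m[w_m,t_2]$ is an $(s_2,t_2)$-path. It is disjoint from $P_l$ for both $l\neq i$, because $Q_j[s_2,z_j)$ and $Q_m(w_m,t_2]$ avoid $S$ by the choice of $z_j,w_m$. This contradicts \eqref{assume-semicomplete-split}, so $y=y'$.

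The step I expect to need the most care is this last one. The issue is the degenerate positions $w_m=s_1^i$ or $z_j=t_1^i$, where one backward arc alone does not connect $z_j$ to $w_m$; that is why both arcs $t_1^iy$ and $y's_1^i$ are derived first. I also need to check that the pieces of the concatenated path are genuinely disjoint. For example, $Q_j[s_2,z_j]$ and $Q_m[w_m,t_2]$ may share vertices only if $j=m$. In that case $Q_j$ itself passes through both $z_j$ and $w_j$, and the same argument applies with $Q_j[s_2,z_j]$ and $Q_j[w_j,t_2]$, which are disjoint because $w_j$ comes after $z_j$ on $Q_j$.
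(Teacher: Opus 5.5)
Your proof is correct and is essentially the paper's argument. When $|V(P_i)|\ge 6$, minimality of $P_i$ and the semicomplete split structure force the backward arcs $t_1^iy$ and $y's_1^i$, and the ordering $w_m\prec z_j$ on $P_i$ then lets you reroute $Q_j[s_2,z_j]$ through $V(P_i)$ to $Q_m[w_m,t_2]$, avoiding the other two paths. Your explicit three-way case split for the $(z_j,w_m)$-path $R$, together with the check $z_j\neq w_m$, is a cleaner version of the paper's single concatenated ``trail'', which is really a walk that has to be shortened to a path.
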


\begin{proof}
    Suppose \(|V(P_1)| \geq 6\). Let \(u_1\) be the successor of \(s_1^1\) on \(P_1\) and \(v_1\) the predecessor of \(t_1^1\) on \(P_1\). Since \(\{s_1^1,t_1^1\} \subseteq V_1\), it follows that \(\{u_1, v_1\} \subseteq V_2\). By the minimality of \(P_1\) and the definition of semicomplete split digraphs, we have \(t_1^1u_1, v_1s_1^1 \in A(D)\). Let \(i,j \in \{1,2,3\}\) be such that \(\{z_{i}, w_{j}\} \subseteq V(P_1)\).

    If \(|P_1[s_1, z_{i}]| < |P_1[s_1, w_{j}]|\), then the \((s_2, t_2)\)-path 
    \[
    Q = Q_{i}[s_2, z_{i}] \circ P_1[z_{i}, w_{j}] \circ Q_{j}[w_{j}, t_2]
    \]
    is disjoint from \(P_r\) for all \(r \in \{2, 3\}\), which contradicts \eqref{assume-semicomplete-split}. Thus, we may assume \(|P_1[s_1, z_{i}]| > |P_1[s_1, w_{j}]|\). However, the $(s_2,t_2)$-trail
    \[
    Q = Q_{i}[s_2, z_{i}] \circ P_1[z_{i}, t_1^1] \circ t_1^1 u_1 \circ P_1[u_1, v_1] \circ v_1 s_1^1 \circ P_1[s_1^1, w_{j}] \circ Q_{j}[w_{j}, t_2]
    \]
    is disjoint from \(P_i\) for all \(i \in \{2, 3\}\), which again contradicts \eqref{assume-semicomplete-split}.


    Therefore, we conclude \(|V(P_1)| \leq 5\). By \eqref{length-semicomplete-split}, we have \(|V(P_1)| = 5\). Hence, \(P_1\) can be written as \(P_1 = s_1 s_1^1 y_1 t_1^1 t_1\). Since \(\{s_1^1,t_1^1\} \subseteq V_1\), it follows that \(y_1 \in V_2\). Analogously, the same conclusion holds for \(P_2\) and \(P_3\).
\end{proof}

From the definition \(S = \bigcup_{i=1}^{3}V(P_i) - \{s_1,t_1\}\) and Claim \ref{Locally semicomplete split: claim 10}, we have \(S = \{s_1^1,s_1^2,s_1^3,y_1,y_2,y_3,t_1^1,t_1^2,t_1^3\}\). By the symmetry between the paths \(Q_1, Q_2, Q_3\) and the paths \(P_1, P_2, P_3\), the conclusion of Claim \ref{Locally semicomplete split: claim 10} also holds for \(Q_1, Q_2, Q_3\). It follows that \(S' = \bigcup_{i=1}^{3}V(Q_i) - \{s_2, t_2\}\) satisfies \(|S'| = 9\). Combining this with \eqref{intersection-semicomplete-split} and Claim \ref{Locally semicomplete split: claim 10}, we obtain \(S' = S = \{s_1^1, s_1^2, s_1^3, y_1, y_2, y_3,t_1^1, t_1^2, t_1^3\}\). 

\begin{claim}\label{semicomplete-split digraph-x_1,x_2,x_3}
    For each \(i \in \intthree\), \(|\{y_1,y_2,y_3\} \cap V(Q_i)| = 1\).
\end{claim}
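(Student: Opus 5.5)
The plan is to get this claim by counting, using the structure that Claim~\ref{Locally semicomplete split: claim 10} gives to both families of paths together with the identity $S'=S$ established just above. No further path surgery should be needed.

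First, I would record how $S$ splits between $V_1$ and $V_2$. By Claim~\ref{Locally semicomplete split:claim 2}, $\{s_1^1,s_1^2,s_1^3,t_1^1,t_1^2,t_1^3\}\subseteq V_1$, and by Claim~\ref{Locally semicomplete split: claim 10}, $y_1,y_2,y_3\in V_2$. Hence
\[
S\cap V_2=\{y_1,y_2,y_3\},\qquad S\cap V_1=\{s_1^1,s_1^2,s_1^3,t_1^1,t_1^2,t_1^3\}.
\]

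Second, I would apply the version of Claim~\ref{Locally semicomplete split: claim 10} for $Q_1,Q_2,Q_3$, which the paper invokes by the symmetry between the $P$-paths and the $Q$-paths. Each $Q_i$ then has the form $s_2\,s_2^i\,y_i'\,t_2^i\,t_2$ with $s_2^i,t_2^i\in V_1$ and $y_i'\in V_2$. The paths $Q_1,Q_2,Q_3$ are internally disjoint, and each has exactly three internal vertices. Since $S'=S$ has exactly nine vertices, the internal vertex sets of $Q_1,Q_2,Q_3$ partition $S$.

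Third, I would compare the two splittings. The $V_2$-vertices of $S$ are exactly the middle vertices $y_1',y_2',y_3'$, one on each $Q_i$. These must coincide with $\{y_1,y_2,y_3\}=S\cap V_2$. So each $Q_i$ contains exactly one vertex of $\{y_1,y_2,y_3\}$, namely $y_i'$, and its other two internal vertices lie in $S\cap V_1$.

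The only step that needs real care is the symmetric application of Claim~\ref{Locally semicomplete split: claim 10}, since it gives the membership $y_i'\in V_2$, $s_2^i,t_2^i\in V_1$. I would check that the hypotheses used for the $P$-paths transfer to the $Q$-paths with the roles of $(s_1,t_1)$ and $(s_2,t_2)$ swapped:
\begin{itemize}
\item the analogue of Claim~\ref{Locally semicomplete split:claim 2}, giving $s_2^i,t_2^i\in V_1$;
\item the fact that there is no $(s_2,t_2)$-path of length at most $3$;
\item the Case~3 distribution of first and last intersection points, now taken along the $Q_i$ with respect to the $P_j$.
\end{itemize}
The last item holds because in Case~3 each $P_j$ must meet the $Q$'s in a way that again forces the one-per-path pattern; otherwise we are back in Case~1 or Case~2 with roles exchanged. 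Once this is granted, the claim follows from the counting above.
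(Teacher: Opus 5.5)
Your proof is correct and is essentially the paper's argument: both use $S'=S$ and the $Q$-version of Claim~\ref{Locally semicomplete split: claim 10}, so the internal vertices of $Q_1,Q_2,Q_3$ partition $S$. The only difference is the final step. The paper argues by pigeonhole that a $Q_j$ missing every $y$ would have all three internal vertices in the independent set $V_1$, whereas you read $y_i'\in V_2$ directly from the symmetric claim and match it against $S\cap V_2=\{y_1,y_2,y_3\}$.
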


\begin{proof}
Suppose that some $Q_i$ contains at least two of the vertices $y_1,y_2,y_3$. Then there is another $Q_j$  whose three internal vertices are all in $\{s_1^1,s_1^2,s_1^3,t_1^1,t_1^2,t_1^3\}\subseteq V_1$. This is impossible as $V_1$ is an independent set. Hence every $Q_i$ contains exactly one of the vertices $y_1,y_2,y_3$.

\end{proof}

Furthermore, combining Claim \ref{semicomplete-split digraph-x_1,x_2,x_3} with the fact that Claim \ref{Locally semicomplete split: claim 10} holds for \(Q_1,Q_2,Q_3\), the paths \(Q_1, Q_2, Q_3\) must take the following forms:
\[
\begin{aligned}
Q_1 &= s_2 \to s_2^1 \to y_{j_1} \to t_2^1 \to t_2, \\
Q_2 &= s_2 \to s_2^2 \to y_{j_2} \to t_2^2 \to t_2, \\
Q_3 &= s_2 \to s_2^3 \to y_{j_3} \to t_2^3 \to t_2,
\end{aligned}
\tag{$\ast$}
\label{eq:Q_path_form}
\]
where $\{j_1,j_2,j_3\} = \{1,2,3\}$ and 
$\{s_2^1,s_2^2,s_2^3,t_2^1,t_2^2,t_2^3\}=\{s_1^1,s^1_2,s_1^3,t_1^1,t_1^2,t_1^3\}$.\\

\begin{claim}\label{cl:extra}
$\{s_2^1,s_2^2,s_2^3\}=\{t_1^1,t_1^2,t_1^3\}$  and 
$\{t_2^1,t_2^2,t_2^3\}=\{s_1^1,s^1_2,s_1^3\}$.
\end{claim}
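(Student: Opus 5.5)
By symmetry (reversing all arcs of $D$ swaps the roles of the $s$- and $t$-vertices), it suffices to prove the first equality $\{s_2^1,s_2^2,s_2^3\}=\{t_1^1,t_1^2,t_1^3\}$. Since both triples consist of three distinct vertices of the six-element set $\{s_1^1,s_1^2,s_1^3,t_1^1,t_1^2,t_1^3\}$, the second equality then follows by complementation. So suppose, for a contradiction, that some $s_2^q$ equals $s_1^r$ for some $q,r\in\intthree$. The aim is to build an $(s_2,t_2)$-path avoiding one of the paths $P_1,P_2,P_3$, contradicting \eqref{assume-semicomplete-split}.

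The construction uses only arcs already present. We have the arc $s_2\to s_1^r$, since $s_1^r=s_2^q$ is the successor of $s_2$ on $Q_q$. We also have the arc $s_1^r\to y_r$ on $P_r$. Now $y_r$ lies on exactly one path $Q_p$ by Claim~\ref{semicomplete-split digraph-x_1,x_2,x_3}, and on $Q_p$ it is followed by $t_2^p$ and then $t_2$. Consider the walk
\[
s_2\, s_1^r\, y_r\, t_2^p\, t_2 .
\]
If $p=q$, this is exactly $Q_q$, and its internal vertices $s_1^r,y_r,t_2^q$ must then include a vertex of every $P_i$. Since $s_1^r,y_r\in V(P_r)$, the vertex $t_2^q$ must meet the other two paths $P_i$, $i\neq r$, simultaneously. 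But $t_2^q\in S$ lies on only one $P_i$, so this is impossible. Hence this case already contradicts \eqref{intersection-semicomplete-split}.

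If $p\neq q$, the walk above is a genuine $(s_2,t_2)$-path with only three internal vertices: two on $P_r$ and one, namely $t_2^p$, on a single path $P_m$. So it misses the third path $P_\ell$ with $\ell\notin\{r,m\}$, unless $m=r$, in which case it misses two paths. Either way it is disjoint from some $P_\ell$, contradicting \eqref{assume-semicomplete-split}. The only care needed is to check that the vertices are distinct and that $t_2^p\neq s_1^r$; the latter holds because $t_2^p$ and $s_2^q$ are distinct vertices of $S'$.

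The main obstacle is making sure the path built in the case $p\neq q$ really is a path. It must not revisit a vertex, and the identification $s_2^q=s_1^r$ must not force $t_2^p$ onto $P_r$ in a way that breaks the count. This is handled by the observation that every vertex of $S$ lies on exactly one $P_i$ and exactly one $Q_j$, which follows from $S=S'$, $|S|=9$, and the internal disjointness of both families.
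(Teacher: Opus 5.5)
Your proof is correct. It is built on the same construction the paper uses for its final contradiction: the short $(s_2,t_2)$-path $s_2\,s_1^r\,y_r\,x\,t_2$, where $x$ is the successor of $y_r$ on the $Q$-path that contains it. The difference is that you apply it immediately to any identification $s_2^q=s_1^r$.

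The paper first normalizes. It assumes $s_2^1=s_1^1$ and uses the path $s_2s_1^1y_1t_1^1t_2$ to force $t_1^1\in\{s_2^1,s_2^2,s_2^3\}$. Repeating this, it reaches $\{s_2^1,s_2^2,s_2^3\}=\{s_1^1,t_1^1,t_1^2\}$, and only then invokes \eqref{eq:Q_path_form} to route through the successor of $y_1$.

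You observe that the successor $t_2^p$ of $y_r$ always works, whether it lies on $P_r$ itself (e.g.\ $t_2^p=t_1^r$) or on another $P_m$. You also explicitly rule out $p=q$ via \eqref{intersection-semicomplete-split}, which the paper leaves implicit. Together these show the paper's preliminary normalization is unnecessary. Your version also avoids the paper's loose phrasing ``$y_1\to t_1^j$ for $j=2$ or $3$'': what \eqref{eq:Q_path_form} actually gives is $y_1\to t_2^p$, a vertex that need not be one of the $t_1^i$.

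Finally, your complementation remark already yields the second equality, so the appeal to arc reversal is redundant but harmless.
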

\begin{proof}
    Suppose this not the case. Then w.l.o.g. $s_2^1=s_1^1$. Then we also have $t_1^1\in \{s_2^1,s_2^2,s_2^3\}$, since otherwise the $(s_2,t_2)$-path $s_2s_1^1y_1t_1^1t_2$ is disjoint from both $P_2$ and $P_3$. W.l.o.g. $s_2^2=t_1^1$ and now we see by the argument we just gave that w.l.o.g. $s_2^3=t_1^2$.
    The structure of $Q_1,Q_2,Q_3$ as described in (\ref{eq:Q_path_form}) now implies that $y_1 \to t_1^j$ for $j=2$ or $j=3$ and thus 
the path $s_2s_1^1y_1t_1^jt_2$ is disjoint from either $P_2$
 or $P_3$, contradiction. 
 \end{proof}

Up to isomorphism, the semicomplete split digraph \(D\) is illustrated in Figure \ref{semicomplete-split-digraph-Figure5}, where only the key arcs are displayed, the remaining arcs will be further analyzed in subsequent discussions.

\begin{figure}[!htbp] 
    \centering 
    \includegraphics[width=0.46\textwidth]{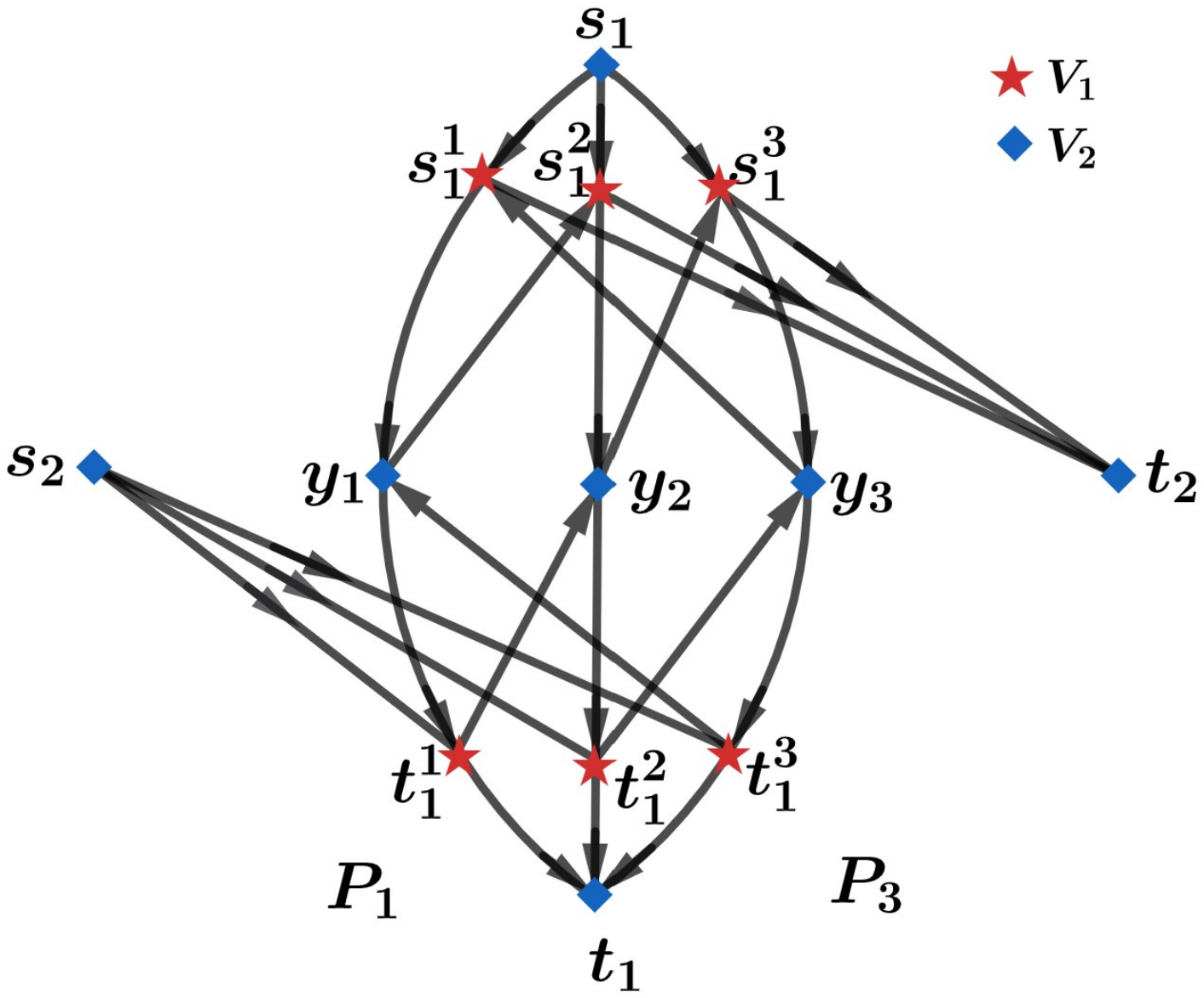}
    \caption{}
    \label{semicomplete-split-digraph-Figure5}
\end{figure}

From the definition of semicomplete split digraphs, $y_2$ is adjacent to $s_1^1$.
However, both orientations of the arc between them, lead to a contradiction with (\ref{assume-semicomplete-split}).
Indeed, if $y_2 \to s_1^1$, then the $(s_2,t_2)$-path $Q = s_2 t_1^1 y_2 s_1^1 t_2$ is disjoint from $P_3$.
If $s_1^1 \to y_2$, then the $(s_2,t_2)$-path $Q' = s_2 t_1^3 y_1 s_1^2 t_2$ and the $(s_1,t_1)$-path $P = s_1 s_1^1 y_2 t_1^2 t_1$ are disjoint.
In either case, we obtain a contradiction and the proof is complete.

\qed

\section{Proof of Theorem \ref{theorem:6-strong semicomplete multipartite}}\label{section 5}
   \noindent \textbf{Proof of Theorem \ref{theorem:6-strong semicomplete multipartite}.}
Let \(D\) be a \(6\)-strong semicomplete multipartite digraph with partite sets \(V_1, V_2, \dots, V_c\). {If there exists an $(s_1,t_1)$-path $P$ of length at most 4 in $D - \{s_{2},t_{2}\}$, then $D - P$ is strong. It follows that \(D - P\) contains an \((s_{2}, t_{2})\)-path. Hence, there are two disjoint paths as desired. So, from now on, we assume that 
\begin{equation}\label{equation5}
    \text{there is no $(s_1,t_1)$-path of length at most 4 in $D - \{s_{2},t_{2}\}$.} 
\end{equation}}

Let \( P_1, P_2, P_3, P_4 \) be  four internally disjoint \( (s_1, t_1) \)-paths in \( D - \{s_2, t_2\} \) each  selected to be minimal. Similarly, let $Q_1$ be a minimal $(s_2, t_2)$-path in $D - \{s_1, t_1\}$. We can assume that  $Q_1$  intersects all $P_1, P_2, P_3, P_4$; otherwise, we are done. Let $S=\bigcup_{i=1}^4 V(P_i) \setminus \{s_1,t_1\}$ and {let $z$ (resp.\ $w$) denote the first (resp.\ last) vertex in $V(Q_1) \cap S$ along the path $Q$}. Without loss of generality, we assume that the vertex \( z\in V( P_1) \), and \( w \in V( P_{j_0}) \) for some \( j_0 \in [4] \). Let $t_1^-$ be the predecessor of $t_1$ on $P_1$, and let $s_1^+$ (resp. $s_1^{++})$ be the successor of $s_1$ (resp. $s_1^+)$ on $P_{j_0}$. {Suppose that  \(t_1^- \in V_i\) and \(s_1^+ \in V_j\) for some \(i,j \in [c]\). If  \(i \neq j\), let $s':=s^+_1$, otherwise, let $s':=s_1^{++}$. This implies that $s' \notin V_i$, so $t_1^-$ is adjacent to $s'$. Together with (\ref{equation5}), we thus have $t_1^- s' \in A(D)$.}



If \(w \neq s_1^+\), then the $(s_2,t_2)$-path \(Q_1[s_2,z] \circ P_1[z,t_1^-] \circ t_1^-s_1^{++} \circ P_{j_0}[s_1^{++},w] \circ Q_1[w,t_2]\) is disjoint from \(P_j\) for some \(j \in \intfour \setminus \{1, j_0\}\). In the remaining case we have  \(w = s_1^+\). Let \(t_1'\) {be} the predecessor of \(t_1\) on \(P_{j_0}\). By the minimality of \(P_{j_0}\) and (\ref{equation5}), there exists a vertex \(u\) on \(P_{j_0}[s_1^{++}, t_1']\) such that \(us_1^+ \in A(D)\). Thus, the $(s_2,t_2)$-path \(Q_1[s_2,z] \circ P_1[z,t_1^-] \circ t_1^-s_1^{++} \circ P_{j_0}[s_1^{++},u] \circ us_1^+ \circ Q_1[s_1^+,t_2]\) is disjoint from \(P_j\) for some \(j \in \intfour \setminus \{1, j_0\}\). {In  conclusion}, we can obtain a pair of disjoint \((s_1,t_1)\)-path and \((s_2,t_2)\)-path in \(D\), as desired.
\qed

\section{Remarks}\label{sec:remarks}

The 2‐linkage problem can be solved in polynomial time for semicomplete digraphs \cite{bang1992polynomial}. Inspired by this, Bang-Jensen and Wang \cite{bang2025strong} posed the question of whether the 2-linkage problem for split digraphs is also polynomial-time solvable.

\begin{problem}\cite{bang2025strong}\label{open-problem 1}
Is there a polynomial algorithm for the 2-linkage problem for split digraphs?
\end{problem}

Note that the answer is yes for semicomplete split digraphs. This follows from a result by Chudnovsky, Scott and Seymour \cite{CHUDNOVSKY2015582} combined with the fact that semicomplete split digraphs are a subclass of semicomplete multipartite digraphs (for which polynomial algorithms for the \(2\)-linkage problem are already known).

Theorem \ref{split digraph:main theorem 1} says that every 6-strong split digraph is 2-linked. While the tightness of the bound 6 remains open, we believe that 6 is indeed tight. We therefore formulate the following problem.

\begin{problem}
Does there exist a 5-strong split digraph that fails to be 2-linked?
\end{problem}

{Recall that Zhou and Yan \cite{zhou2025proof} proved that every $(2k+1)$-strong semicomplete digraph with minimum out-degree at least $7k^2+36k$ is $k$-linked. We believe that an analogous result holds for split digraphs.}

\begin{problem}\label{probelm-split:(2k+1) and ck^2}
   {There is a constant \(c\) such that every \((2k+1)\)-strong split digraph \(D\) with \(\delta^+ (D) \geq ck^2\) is \(k\)-linked.}
\end{problem}

\section*{Acknowledgments}
This work was supported by the following grants: National Natural Science Foundation of China (12571373), project ZR2025MS05 supported by Shandong Provincial Natural Science Foundation.

\bibliography{references} 

\end{document}